\newtheorem{theorem}{Theorem}
\newtheorem{lemma}{Lemma}
\newtheorem{assumption}{Assumption}
\newtheorem{claim}{Claim}
\theoremstyle{remark}
\newtheorem{remark}{Remark}
\newcommand{\citet}{\cite}
\newcommand{\citep}{\cite}
\newcommand{\reb}[1]{\textcolor{black}{#1}}
\def\R{\mathbb{R}}
\def\E{\mathbb{E}}
\def\N{\mathbb{N}}
\def\P{\mathbb{P}}
\newcommand{\ba}{\boldsymbol{a}}
\newcommand{\bb}{\boldsymbol{b}}
\newcommand{\bd}{\boldsymbol{d}}
\newcommand{\g}{\boldsymbol{g}}
\newcommand{\m}{\boldsymbol{m}}
\newcommand{\bs}{\boldsymbol{s}}
\newcommand{\bv}{\boldsymbol{v}}
\newcommand{\w}{\boldsymbol{w}}
\newcommand{\x}{\boldsymbol{x}}
\newcommand{\y}{\boldsymbol{y}}
\newcommand{\z}{\boldsymbol{z}}
\def\bA{\boldsymbol{A}}
\def\bI{\boldsymbol{I}}
\def\bL{\boldsymbol{L}}
\def\bW{\boldsymbol{W}}
\def\bX{\boldsymbol{X}}
\def\cE{\mathcal{E}}
\def\cF{\mathcal{F}}
\def\cG{\mathcal{G}}
\def\cN{\mathcal{N}}
\def\cV{\mathcal{V}}
\DeclareMathOperator{\clip}{\textsf{clip}}
\newcommand{\bxi}{\boldsymbol{\xi}}
\newcommand{\beps}{\boldsymbol{\epsilon}}
\newcommand{\bPsi}{\boldsymbol{\Psi}}
\newcommand{\one}{\mathbf{1}}
\newcommand{\zero}{\mathbf{0}}
\newcommand{\ox}{\bar{\x}}
\newcommand{\ov}{\bar{\bv}}
\newcommand{\oy}{\bar{\y}} 
\newcommand{\ts}{\tilde{\bs}}
\newcommand{\tz}{\tilde{\z}} 
\newcommand{\tbeps}{\tilde{\beps}}
\newcommand{\onab}{\overline{\nabla}}
\newcommand{\ty}{\tilde{\y}}
\newcommand{\dsgd}{\texttt{DSGD}}
\newcommand{\dsgdgclip}{\texttt{DSGD-Clip}}
\newcommand{\gtdsgd}{\texttt{GT-DSGD}}
\newcommand{\gtnsgdm}{\texttt{GT-NSGDm}}
\newcommand{\sen}{\texttt{SClip-EF-Network}}
\newcommand{\argmin}{\mathop{\mathrm{arg\,min}}}
\def\1{\bm{1}}
\DeclareMathAlphabet{\mathsfit}{\encodingdefault}{\sfdefault}{m}{sl}
\SetMathAlphabet{\mathsfit}{bold}{\encodingdefault}{\sfdefault}{bx}{n}
\title{Decentralized Nonconvex Optimization under Heavy-Tailed Noise: Normalization and Optimal Convergence}
\author{Shuhua Yu\thanks{
     Department of Electrical and Computer Engineering,
     Carnegie Mellon University, Pittsburgh, PA 15213, USA. Emails:
     \texttt{\{shuhuay, soummyak\}@andrew.cmu.edu}}
     \and
     Du\u san Jakoveti\'c\thanks{
     Faculty of Sciences, Department of Mathematics and Informatics, University of Novi Sad, Novi Sad, 21000, Serbia. Email: \texttt{dusan.jakovetic@dmi.uns.ac.rs}}
     \and
     Soummya Kar\footnotemark[1]} 
\date{April 2026}
\begin{document}
\maketitle
\vskip 0.3in 

\begin{abstract}
Heavy-tailed noise in nonconvex stochastic optimization has garnered increasing research interest, as empirical studies, including those on training attention models, suggest it is a more realistic gradient noise condition. This paper studies first-order nonconvex stochastic optimization under heavy-tailed gradient noise in a decentralized setup, where each node can only communicate with its direct neighbors in a predefined graph. Specifically, we consider a class of heavy-tailed gradient noise that is zero-mean and has only $p$-th moment for $p \in (1, 2]$. We propose $\gtnsgdm$, Gradient Tracking based Normalized Stochastic Gradient Descent with momentum, that utilizes normalization, in conjunction with gradient tracking and momentum, to cope with heavy-tailed noise on distributed nodes. We show that, when the communication graph admits primitive and doubly stochastic weights, $\gtnsgdm$\ guarantees, for the \textit{first} time in the literature, that the expected gradient norm converges at an \textit{optimal non-asymptotic rate} $O\big(1/T^{(p-1)/(3p-2)}\big)$, which matches the lower bound in the centralized setup. When the tail index $p$ is unknown, \gtnsgdm\ attains a non-asymptotic rate $O\big( 1/T^{(p-1)/(2p)} \big)$ that is, for $p < 2$, topology independent and has a speedup factor $n^{1-1/p}$ in terms of the number of nodes $n$. Finally, experiments on nonconvex linear regression with tokenized synthetic data and decentralized training of language models on a real-world corpus demonstrate that \gtnsgdm\ is more robust and efficient than  baselines.
\end{abstract} 
\section{Introduction}

In this paper, we address the problem of nonconvex stochastic optimization under heavy-tailed gradient noise in the decentralized setup. Consider a graph with $n$ nodes connected by a predefined topology $\cG: = (\cV, \cE)$, where $\cV := \{1, \ldots, n\}$ is the set of node indices, and $\cE$ is the collection of directed pairs $(i, r)$, $i, r \in \cV$ such that node $i$ can send information to the neighboring node $r$. Each node $i \in \cV$ holds a local nonconvex differentiable cost function $f_i: \R^d \rightarrow \R$, and can access its stochastic gradient, subject to zero mean noise with a bounded $p$-th moment for some $p \in (1, 2]$. Cooperatively, these nodes aim to solve $\min_{\x \in \R^d} f(\x) := (1/n)\sum_{i=1}^n f_i(\x)$, through local computation and peer-to-peer communication only with their immediate neighbors.




Decentralized optimization in the above formulation has been studied for decades \citep{tsitsiklis1986distributed}, and has recently attracted growing research interest due to its advantages in scalability and privacy preservation across a wide range of distributed machine learning, signal processing, and control tasks over networks \citep{nedic2018network, li2020federatedspm, kairouz2021advances}. For instance, in privacy-sensitive applications such as those in the medical domain \citep{brisimi2018federated}, training data are often distributed across $n$ nodes due to privacy constraints. In such cases, each $f_i$ represents an empirical risk function, e.g., a neural network, defined over the local dataset on node $i$, and all nodes collaboratively train a global predictive model via peer-to-peer communication without sharing raw data. Moreover, decentralized optimization is also employed in data centers to reduce communication bottlenecks associated with the central node in traditional centralized training paradigms \citep{lian2017can}.


In decentralized optimization, first-order methods are widely favored for their simplicity and scalability \citep{xin2020general}. However, computing the exact gradient of each local objective function $f_i$ at every iteration can be computationally expensive, particularly in large-scale settings where each node holds a substantial volume of local data. To alleviate this computational burden, decentralized stochastic gradient methods, which approximate exact gradients, have been extensively studied. Most existing approaches, including decentralized stochastic (sub)gradient descent \citep{sundhar2010distributed, koloskova2020unified, wang2021cooperative}, variance reduction techniques \citep{yuan2018variance}, and gradient tracking-based schemes \citep{di2016next, pu2021distributed}, typically assume that stochastic gradient noise has a \textit{finite variance}. Nevertheless, recent empirical and theoretical evidence indicates that, when optimizing certain neural network architectures, especially attention-based models such as Transformers \citep{vaswani2017attention}, the gradient noise often follows a \textit{heavy-tailed distribution}\footnote{A random variable $X$ is called heavy-tailed if it exhibits a heavier tail than any exponential distribution; formally, for any constant $a > 0$, $\limsup_{x \rightarrow \infty} \P(X > x)e^{ax} = \infty$ \citep{nair2022fundamentals}. While some heavy-tailed distributions, such as log-normal and Weibull, still have bounded variance, this paper also considers the sub-class of heavy tailed gradient noise that may have unbounded (infinite) variance such as $\alpha$-stable noise.} with significantly large or even \textit{infinite variance} \citep{simsekli2019tail, zhang2020adaptive, gorbunov2020stochastic, gurbuzbalaban2021heavy, ahn2024lineartransformer, kunstner2024heavy}. The presence of heavy-tailed gradient noise poses substantial challenges for existing methods. Empirically, some stochastic gradient descent (SGD) based methods can suffer from instability and even dramatic drop of training accuracies \citep{zhang2020adaptive, charles2021large, yang2022taming}, particularly in distributed large-cohort training. Theoretically, unbounded variance renders many established analyses invalid, and in \textit{centralized} settings  it necessitates the use of nonlinear adaptive techniques such as clipping, sign, and normalization  \citep{zhang2020adaptive, sadiev2023high, compagnoni2025adaptive, hubler2024gradient, liu2025nonconvex, armacki2025high} to combat the strong noise. However, incorporating such adaptive strategies in \textit{decentralized} algorithms introduces inherent \textit{nonlinearity} into the algorithmic dynamics associated with the average-sum structured function $f$, making the design and analysis of decentralized algorithms under heavy-tailed noise significantly more challenging.


Decentralized optimization under heavy-tailed gradient noise remains underexplored. To the best of our knowledge, only recent studies \citet{sun2024distributed,yu2023smoothed} have attempted to address this problem under restrictive assumptions. Specifically, \citet{sun2024distributed} considers zero-mean gradient noise with bounded $p$-th central moment ($p \in (1,2]$) similar to our setting but assumes a \textit{compact} domain or \textit{bounded gradients}. Their proposed decentralized gradient descent method with $\ell_2$ gradient clipping achieves almost sure convergence for \textit{strongly convex} local functions. However, the restrictive compact domain or gradients assumption in \citet{sun2024distributed} limits its practical applicability, and the convergence rate is not explicitly provided. Another work, \citet{yu2023smoothed}, also assumes strongly convex local objectives and develops a decentralized gradient method with smoothed clipping and error feedback under gradient noise that is zero-mean, \textit{symmetric}, and has bounded first absolute moment, showing an \textit{in-expectation} convergence rate of $1/t^{\delta}$ for some $\delta \in (0, 2/5)$. Although the noise assumption in \citet{yu2023smoothed} is weaker than ours (as it requires only a first-moment bound), the additional assumptions of \textit{noise symmetry} and the dependence of the rate exponent $\delta$ on both the problem dimension and condition number restrict its general applicability. Moreover, both works \citet{sun2024distributed,yu2023smoothed} assume strong convexity, whereas many practical optimization problems involving heavy-tailed noise, particularly in modern machine learning, are inherently nonconvex. Further, the convergence rates in \citet{sun2024distributed,yu2023smoothed} are either unclear or sub-optimal, even compared to the optimal iteration complexity bound $O\big(1/T^{(p-1)/(3p-2)}\big)$ for general nonconvex functions. In this work, we relax these restrictive assumptions and address the following question:

\begin{center}
\textit{Can we design a decentralized algorithm for \textbf{nonconvex} optimization under general zero-mean gradient noise with only a finite $p$-th moment for $p \in (1, 2]$ with \textbf{optimal iteration complexity}?}
\end{center}

\subsection{Contributions}
We answer this question affirmatively through the following key contributions:
\begin{itemize}
    \item We develop a decentralized method, called \gtnsgdm, using normalization, coupled with momentum variance reduction, to combat heavy-tailed noise, and using gradient tracking to handle cross-node heterogeneity. To further shed light on the design of \gtnsgdm, we provide a negative result for a vanilla variant of normalized decentralized SGD that employs neither gradient tracking nor momentum. 

    \item For general nonconvex and smooth local functions $f_i$'s that are bounded from below, we show that \gtnsgdm\ converges in expectation at a rate $O\big(1/T^{(p-1)/(3p-2)}\big)$, which matches the lower bound in the centralized setting and is order-optimal. Our convergence rate significantly improves upon related works \citep{sun2024distributed, yu2023smoothed}, which assume strong convexity and lack an explicit rate exponent.

    \item When the tail index $p$ is unknown, \gtnsgdm\ achieves a rate of $O(1/T^{(p-1)/(2p)})$, matching the best-known rate in the centralized setting without requiring knowledge of $p$. Notably, for $p \in (1, 2)$ and sufficiently large $T$, this rate is \textit{independent} of the network topology and exhibits a \textit{speedup} in the number of nodes, with a factor of $n^{1 - 1/p }$. 

    \item We test our theoretical findings in nonconvex linear regression models on a synthetic dataset that is built to simulate language tokens under controlled heavy-tailed noise injections. We also test \gtnsgdm\ on distributed training of decoder-only Transformer models on Multi30k datasets \citep{elliott2016multi30k}. Experiments on multiple variants of network topologies show that \gtnsgdm\ is more robust to injected and empirical heavy-tailed noise and converges faster. 
\end{itemize}

\subsection{Related Work}

\textbf{Heavy-tailed gradient noise.} Recent empirical studies suggest that the distribution of gradient noise in training various deep learning models resembles heavy-tailed distributions, such as Lévy’s $\alpha$-stable distribution \citep{simsekli2019tail, zhang2020adaptive, barsbey2021heavy, battash2024revisiting}. For instance, the work \citet{zhang2020adaptive} demonstrates that the empirical distribution of gradient norm samples during BERT pre-training closely aligns with an $\alpha$-stable distribution, rather than a Gaussian one (see their Figure 1). The presence of heavy-tailed gradient noise is also supported by theoretical insights \citep{simsekli2019tail, peluchetti2020stable, gurbuzbalaban2021heavy, barsbey2021heavy}. In particular, \citet{simsekli2019tail} leverages generalized central limit theorems to show that the gradient noise in SGD can converge to an $\alpha$-stable random variable.


\textbf{Adaptive methods.} Under heavy-tailed noise, vanilla SGD based methods are shown to suffer from slower convergence or even model collapses in \textit{centralized} settings \citep{zhang2020adaptive} as well as \textit{distributed settings with a central server} \citep{yang2022taming, lee2025efficient}, and adaptive methods such as clipping and normalization are introduced to stabilize training dynamics. In \textit{centralized} settings, the work \citet{zhang2020adaptive} provides lower bounds for both nonconvex and strongly convex smooth functions, showing that SGD with gradient clipping achieves \textit{in-expectation} upper bounds matching lower bounds. In \citet{sadiev2023high, liu2023breaking, nguyen2023improved, chezhegov2024gradient}, the authors show that when equipped with gradient clipping, SGD, accelerated methods, AdaGrad \citep{duchi2011adaptive}, and Adam \citep{kingma2014adam} can achieve (near-)optimal \textit{high-probability} convergence under various function assumptions. Besides, the work \citet{compagnoni2025adaptive} shows that signSGD is also robust to heavy-tailed noise through the lens of stochastic differential equations. Further, SGD with gradient normalization, which advantageously requires less hyper-parameter tuning than clipping, is shown to achieve optimal \textit{in-expectation} convergence \citep{hubler2024gradient, liu2025nonconvex, sun2024gradient}. Our method incorporates the same normalization and variance reduction approach as \citet{liu2025nonconvex}. Notably, in another line of works \citet{jakovetic2023nonlinear, armacki2025high, armacki2024large}, the authors conduct a unified convergence analysis for generic nonlinear methods including clipping, sign, and normalization under \textit{symmetric} noise with positive probability mass around zero without assuming any noise moment bound or only assuming a first absolute noise moment bound. In \textit{distributed settings with a server}, the work \citet{gorbunov2024highprobability} proposes an algorithm that incorporates an error feedback mechanism, wherein clipping is applied to the discrepancy between a local gradient estimator and a stochastic gradient, and establishes optimal \textit{high-probability} bounds. Moreover, the work \citet{compagnoni2025unbiased} shows that distributed signSGD converges to an asymptotic neighborhood depending on the `fatness' of the noise tail. When multiple local updates are permitted between communication rounds, the authors of \citet{yang2022taming} show that clipping per local step achieves order-optimal in-expectation convergence, albeit under a restrictive \textit{bounded gradient} assumption. \reb{In addition, the work \citet{lee2024efficient} rigorously shows that distributed training under heavy-tailed noise can collapse with unbounded gradients, and shows that client-side adaptive methods can stabilize training with bounded regrets in an online learning framework.} More recently, the work \citet{lee2025efficient} introduces the TailOPT framework, which adaptively leverages gradient geometry by applying clipping operators during local updates on distributed nodes and utilizing adaptive optimizers for global updates at the server, achieving in-expectation sublinear convergence rates that are independent of the moment parameter $p$.

\textbf{Nonlinearities in decentralized optimization.} Extending existing methods that are robust to heavy-tailed noise, whether developed for \textit{centralized} settings or \textit{distributed settings with a server}, to decentralized environments is highly nontrivial, primarily due to the \textit{nonlinearities} introduced to peer-to-peer communication. This difficulty is reflected in that existing decentralized methods incorporating nonlinear adaptive techniques for other purposes often impose restrictive conditions \citep{yu2023secure, li2025convergence}. For example, to achieve differential privacy through gradient clipping, the work \citet{li2025convergence} establishes convergence in decentralized setups under the assumption of either \textit{bounded gradient} or a \textit{stringent similarity} condition, namely $\| \nabla f_i(\x) - \nabla f(\x) \| \le (1/12) \|\nabla f(\x)\|$ for all $i \in [n]$ and all $\x$. Similarly, to attain adversarial robustness against gradient attacks, the authors of \citet{yu2023secure} employ gradient clipping with momentum, assuming that all local functions are convex, share a \textit{common minimizer}, and that $\sum_{i=1}^n f_i$ is strongly convex. \reb{Furthermore, \citet{taheri2023generalization} integrates gradient tracking with normalization to address a max-margin problem. While gradient tracking is employed to ensure similarity among local gradient estimators, the method guarantees convergence only to the direction of the optimal solution under deterministic gradients.} In this work, we significantly relax these conditions and demonstrate the effective use of nonlinearity (specifically, normalization) in decentralized optimization, thereby motivating broader  applications of nonlinear techniques in this setting.  


\subsection{Notation}

We denote by $\N_+$, $\R$, $\R_+$ and $\R^d$, respectively, the set of positive natural numbers, real numbers, nonnegative real numbers, and the $d$-dimensional Euclidean space. We use lowercase normal letters for scalars, lowercase boldface letters for vectors, and uppercase boldface letters for matrices. Further, we denote by $\one_k$ and $\zero_k$ the all-ones and all-zeros vectors of size $k$, respectively, and by $\bI_k$ the $k \times k$ identity matrix. We let $\|\x\|$ denote the $\ell_2$ norm of $\x$, and $\|\bA\|_2$ denote the operator norm of $\bA$. For functions $p(t)$ and $q(t)$ in $t$, we write $p(t) = O(q(t))$ if $\limsup_{t \to \infty} p(t)/q(t) < \infty$. Finally, we use $\E$ to denote expectation over random quantities. 
\section{Problem Formulation} 
\label{sec:problem}
We consider a graph with $n$ nodes, where each node holds a local and private function $f_i: \R^d \rightarrow \R$, and the nodes collectively minimize the unconstrained global objective $f(\x) := (1/n)\sum_{i=1}^n f_i(\x)$ through peer-to-peer communication. We now present some standard assumptions on the problem.

\begin{assumption}[Finite lower bound]
\label{as:ldd} There exists some 
$f_* := \inf_{\x \in \R^d} f(\x) > -\infty$. 
\end{assumption}

\begin{assumption} [$L$-smoothness] 
\label{as:smooth} 
The local function $f_i$ at each node $i \in [n]$ is differentiable and $L$-smooth, i.e., $\forall \x, \y \in \R^d, 
\| \nabla f_i(\x)  - \nabla f_i(\y) \| \le L \| \x - \y \|$.
\end{assumption} 

We next introduce the heavy-tailed noise model. For each node $i \in \cV$, at $t$-th iteration with query $\x_i^t$, the stochastic first-order oracle returns the gradient estimator $\g_i(\x_i^t, \bxi_i^{t})$, where $ \bxi_i^{t}$ denotes the random sample. Let $\Omega, \emptyset$ denote the universe, empty set, respectively. We use the following natural filtration, i.e., an increasing family of sub-$\sigma$-algebras, to denote the past history up to iteration $t$: 
\begin{align*}
    \cF_{-1} := \{\Omega, \emptyset\}, \quad \cF_{t} := \sigma \big( \big \{  \bxi_i^0, \ldots, \bxi_i^{t - 1}  : i \in [n] \big \} \big), \forall t \ge 0. 
\end{align*} 
We then assume this stochastic first-order oracle has the following properties. 

\begin{assumption}[Heavy-tailed noise]
\label{as:noise}
For any $\cF_t$-measurable random vectors $\x \in \R^d$, we have the following: $\forall i \in [n], \forall t \ge 0$, (1) $\E [ \g_i(\x, \bxi_i^t) \mid \cF_t ] = \nabla f_i(\x)$; (2) There exist $p \in (1, 2]$, some constant $\sigma \ge 0$ such that $\E \big[ \| \g_i(\x, \bxi_i^{t}) - \nabla f_i(\x) \|^p \mid \cF_{t} \big] \le \sigma^p$; (3) The family $\{ \bxi_i^t: \forall t \ge 0, i \in [n]\}$ of random samples is independent. 
\end{assumption}

\begin{remark}[Heavy-tailed distributions] 
Assumption \ref{as:noise} covers a broad class of heavy-tailed distributions, including Lévy’s $\alpha$-stable distributions, Student’s $t$-distributions, and Pareto distributions. Note that we do not assume noise symmetry as in \citet{yu2023smoothed}, and when $p = 2$, Assumption \ref{as:noise} reduces to the standard bounded variance condition commonly assumed in the literature.
\end{remark} 

\begin{figure}[ht]
\centering 
\begin{subfigure}[b]{0.23\textwidth}
    \includegraphics[width=\linewidth]{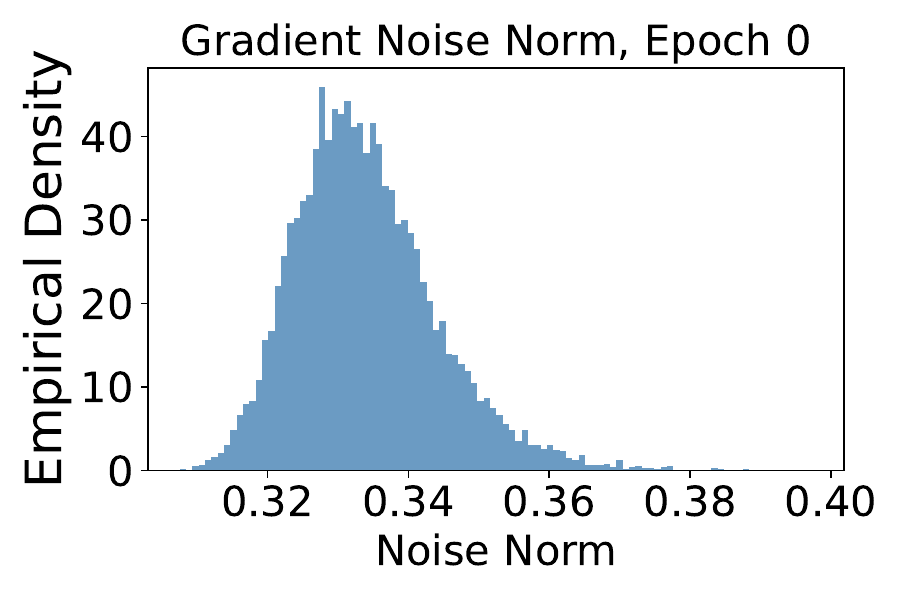}
    \caption{Epoch 0}
\end{subfigure}
\hfill
\begin{subfigure}[b]{0.23\textwidth}
    \includegraphics[width=\linewidth]{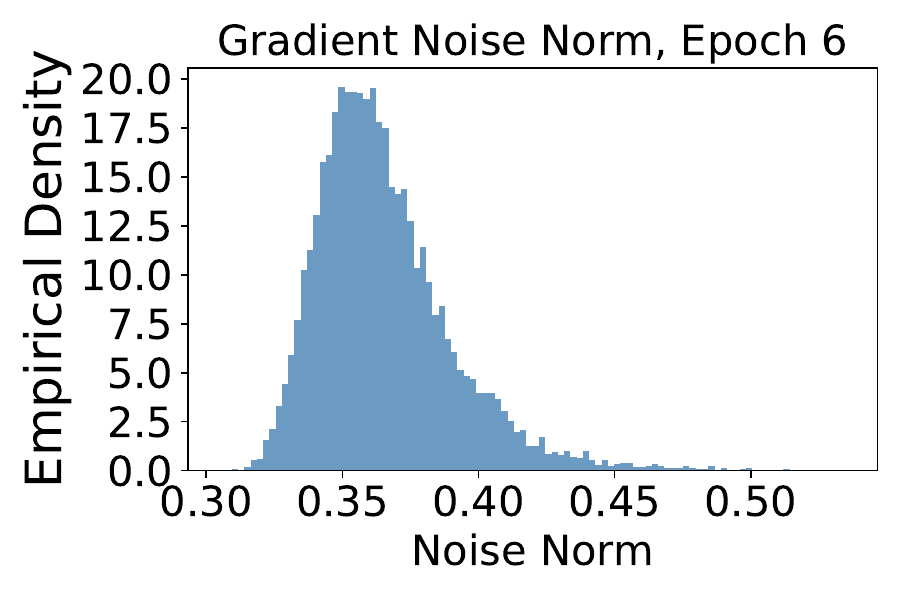}
    \caption{Epoch 6}
\end{subfigure}
\hfill
\begin{subfigure}[b]{0.23\textwidth}
    \includegraphics[width=\linewidth]{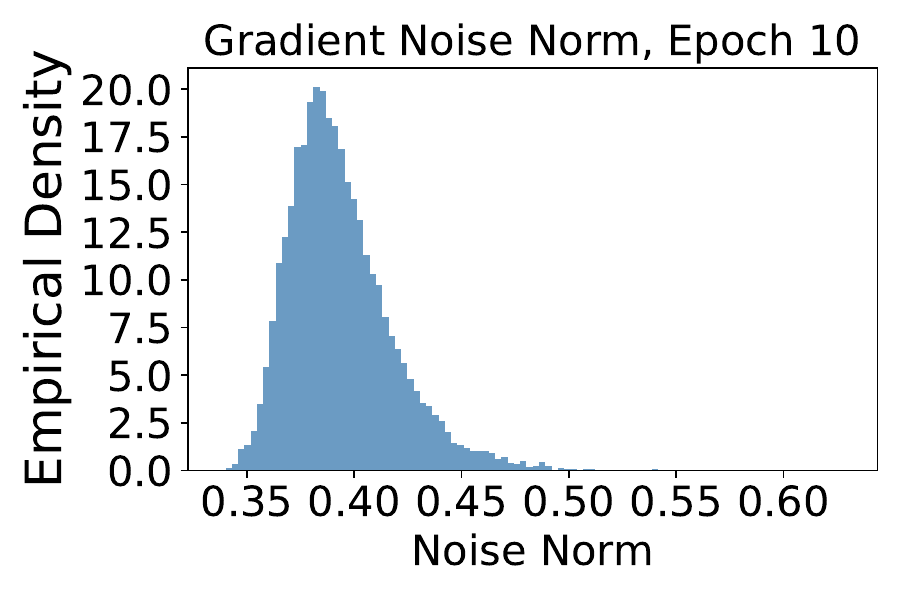}
    \caption{Epoch 10}
\end{subfigure}
\hfill
\begin{subfigure}[b]{0.23\textwidth}
    \includegraphics[width=\linewidth]{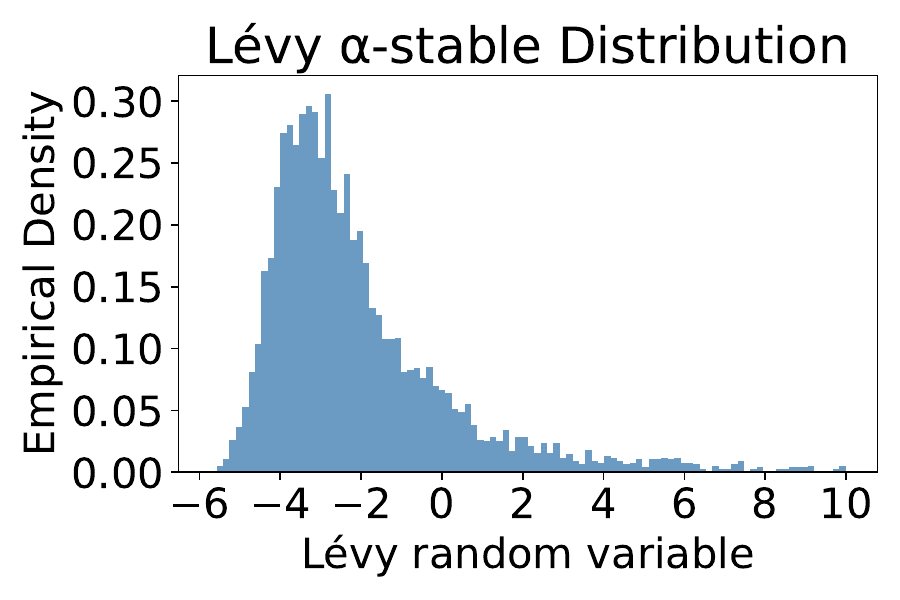}
    \caption{$\alpha$-stable distribution}
\end{subfigure}

\caption{Comparisons of the empirical density of gradient noise norm in different epochs of training a Transformer model with a synthetic Lévy $\alpha$-stable distribution.}
\label{fig:empirical-evidence}
\end{figure}

\begin{remark}[Empirical evidence] Similar to \citet{zhang2020adaptive, yang2022taming}, we investigate the empirical distribution of the gradient noise norm $\| \g(\x, \bxi) - \nabla f(\x) \|$ in a centralized setting by training a GPT model \citep{radford2018improving} with 3M parameters on the Multi30k dataset \citep{elliott2016multi30k}, where $\g(\x)$ denotes the mini-batch stochastic gradient and $\nabla f(\x)$ denotes the full-batch gradient. We train the model for 12 epochs using SGD and plot the empirical density of the noise norm at the beginning of epochs 0, 6, and 10. As shown in Figure \ref{fig:empirical-evidence}, as training progresses, the tail of the empirical gradient noise norm distribution becomes heavier (and longer) and increasingly resembles that of a synthetic $\alpha$-stable distribution.
\end{remark}

For peer-to-peer communication in decentralized settings, we need to specify a mixing matrix $\bW$ on graph $\cG = (\cV, \cE)$. 
\begin{assumption}[Weight matrix]
\label{as:network}
The nonnegative weight matrix $\bW$, whose $(i, r)$-th component of $\bW$, denoted as $w_{ir}$, is positive if and only if $(i, r) \in \cE$ or $i = r$, is primitive and doubly stochastic, i.e., $\one_n \bW = \one_n$ and $\bW \one_n = \one_n$. 
\end{assumption}

Assumption \ref{as:network} is standard in the decentralized optimization literature \citep{xin2020variance}, and it guarantees that there exists some nonnegative $\lambda$, i.e., spectral gap, such that 
\begin{align*}
    \|\bW - \one_n \one^\top/n\|_2 := \lambda < 1. 
\end{align*}
The assumed weight matrix $\bW$ can be constructed on undirected and connected graphs \citep{olshevsky2014linear}, and also on some directed and strongly connected graphs that are weight-balanced \citep{gharesifard2012distributed}. For instance, the family of directed exponential graphs, is weight-balanced and serves as an important topology configuration in decentralized training \citep{assran2019stochastic}.

\section{Algorithm Development: \gtnsgdm}  
\label{sec:algrithm} 
We now describe the proposed Algorithm \gtnsgdm\ and discuss the intuition of its construction. We use $\x_i^t$ to denote the estimate of a stationary point for the global cost function $f$ at node $i$ and $t$-th iteration, and recall that $\g_i(\x_i^t, \bxi_i^t)$ denotes the corresponding stochastic gradient returned from a local first-order oracle. Motivated by the error-feedback approach in \citet{yu2023smoothed}, which serves as a momentum-type of variance reduction after applying a nonlinear operator to handle heavy-tailed noise, we also employ local momentum variance reduction 
\begin{align}
\label{eq:vt-update}
    \bv_i^t = \beta \bv_i^{t - 1} + ( 1- \beta) \g_i(\x_i^t, \bxi_i^t),
\end{align}
where $\beta \in [0, 1)$ serves as the momentum coefficient. Then, we use gradient tracking \citep{di2016next} to handle heterogeneous local functions $\{f_i\}_{i=1}^n$. Specifically, we use an estimator $\y_i^t$ to track the global gradient
\begin{align}
\label{eq:gd-tracker}
    \y_i^{t} =  \sum_{r=1}^n w_{ir} \big(\y_r^{t - 1} + \bv_r^t - \bv_r^{t - 1}\big). 
\end{align}
It is known that gradient tracking helps eliminate the dependence on heterogeneity among local functions $\{f_i\}_{i=1}^n$, such as the requirement of bounded gradient similarity. Furthermore, similar to the approach in \citet{liu2025nonconvex}, which uses normalization to address heavy-tailed noise in \textit{centralized} settings, we avoid applying normalization in the recursive updates of the local gradient estimator $\bv_i^t$ in \eqref{eq:vt-update} and the global gradient tracker $\y_i^t$. Instead, normalization is applied only during the update of $\x_i^t$, with step size $\alpha$, and nonnegative mixing weights $\{w_{ir}\}$ where $w_{ir} > 0$ only when $(i, r) \in \cE$ or $i = r$,  
\begin{align}
\label{eq:local}
    \x_i^{t + 1} = \sum_{r=1}^n w_{ir} \Big(\x_r^t - \alpha \frac{\y_r^t}{\| \y_r^t \|} \Big).
\end{align} 
We combine the local updates \eqref{eq:vt-update}\eqref{eq:gd-tracker}\eqref{eq:local} on node $i \in \cV$ and call it \gtnsgdm, Gradient Tracking based Normalized Stochastic Gradient Descent with momentum. When taking $\beta = 0$, this simplifies to momentum-free gradient tracking with normalization in step \ref{eq:local}. However, our analysis shows that $\gtnsgdm$ performs optimally for some $\beta \in (0, 1)$, making \gtnsgdm\ a non-trivial and optimal algorithmic design for the considered problem class. We provide a tabular description for \gtnsgdm\ in Algorithm \ref{alg:gt-pmnsgd}, where all $\{\x_i^0\}$ are initialized from the same point $\ox^0$ for simplicity.

\begin{algorithm}[ht]
\caption{\gtnsgdm\ at each node $i$}
\label{alg:gt-pmnsgd}
\begin{algorithmic}[1] 
    \REQUIRE $\x_i^{-1} = \x_i^0 = \overline{\x}^0, \bv_i^{-1} = \y_i^{-1} = \zero_d, \alpha, \beta, \{w_{ir}\}, T$. \\
    \FOR{\( t = 0 \) to \( T - 1 \)}
        \STATE Sample \( \bxi_i^{t} \); \hfill (random sample for stochastic gradient)
        \STATE \(\bv_i^{t} \leftarrow \beta \bv_i^{t-1} + (1 - \beta ) \g_i(\x_i^{t}, \bxi_i^{t}) \); \hfill (local gradient estimator) 
        \STATE \(\y_i^{t} \leftarrow \sum_{r = 1}^n w_{ir} (\y_r^{t-1} + \bv_r^t - \bv_r^{t-1}) \); \hfill (local gradient tracker)
        \STATE \( \x_i^{t + 1} \leftarrow \sum_{r = 1}^n w_{ir} \big(\x_r^t - \alpha \frac{ \y_r^{t}}{\| \y_r^{t} \| } \big) \); \hfill (peer-to-peer communication)
    \ENDFOR
\end{algorithmic}
\end{algorithm}

\begin{remark}[Why vanilla gradient normalization fails?] Although vanilla normalization is successfully used in \textit{centralized} settings to robustify SGD against heavy-tailed noise \citep{hubler2024gradient}, its direct extension to the \textit{decentralized} settings fails. Suppose we run a vanilla decentralized normalized (noiseless) gradient descent, i.e., in parallel $\forall i \in \cV$,
\begin{align}
\label{eq:vanilla-gd-normalization}
    \x_i^{t + 1} = \sum_{i=1}^n w_{ir} \big(\x_r^t - \alpha \frac{\nabla f_r(\x_r^t)}{\| \nabla f_r(\x_r^t) \|}\big). 
\end{align}
Then, the global average $\ox^t$ would update in the negative direction of \textit{the sum of normalized local gradients}: $ \ox^{t + 1} = \ox^{t} - \frac{\alpha}{n} \sum_{r = 1}^n \frac{\nabla f_r (\x_r^t) }{\| \nabla f_r(\x_r^t)\|}.$ Let for some $t, \forall r \in \cV, \x_r^t = \x_* = \argmin \sum_{i = 1}^n f_i(\x)$, i.e., all nodes hold the optimal global solution that $\sum_{r=1}^n \nabla f_r (\x_*) = \zero$. Since $\|\nabla f_r(\x_*)\|$ can be different quantities for $r = 1, \ldots, n$, due to function heterogeneity, then $\ox^{t+1}$ will move away from $\x_*$. Therefore, vanilla gradient normalization adds some intrinsic errors from heterogeneous local normalizations. By incorporating gradient tracking, we expect that $\y_r^t$ would converge to its global average $\oy^t$, and $\oy^t$ would converge to $(1/n)\sum_{i=r}^n \nabla f_i(\x_r^t)$. In this way, $\x_r^t$ would move along the direction of the \textit{normalized sum of local gradients}, and thus emulating the centralized setting.
\end{remark} 

In the following claim, we further demonstrate that vanilla gradient normalization can cause the iterates $\x_i^t$ to remain arbitrarily far from the optimal solution (see Appendix \ref{sec:proof_claim} for a proof).
\begin{claim}  
\label{cl:normaldsgd}
Consider algorithm \ref{eq:vanilla-gd-normalization}. For any even $n$, for any $B \ge 1$, there exist $\{f_i\}_{i=1}^n$ satisfying Assumptions \ref{as:ldd}-\ref{as:smooth}, a gradient oracle satisfying Assumption \ref{as:noise}, a mixing matrix satisfying Assumption \ref{as:network}, and an initialization $\x_0$, such that the associated parameters $f_*$, $L$, $\sigma$, $\bW$, $\x_0$, are independent of $B$. Then, $\forall T \ge 1, \forall \alpha > 0$, it holds that $\frac{1}{nT}\sum_{t = 0}^{T - 1} \sum_{i=1}^n \E \big[ \| \nabla f(\x_i^t) \| \big] \ge B$.
\end{claim}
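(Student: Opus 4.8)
The plan is to exhibit, for each $B \ge 1$, a one-dimensional instance on the complete graph in which the vanilla normalized iteration \eqref{eq:vanilla-gd-normalization} stays frozen at its (common) initialization $\x_0$, while $\|\nabla f(\x_0)\| = B$; this immediately forces the left-hand side to equal $B$ for every $T$ and $\alpha$. One should not try to make the iterates escape to infinity: each round moves an iterate by at most $\alpha$, so that route is hopeless uniformly in $\alpha$ and fails already at $T=1$. The real design problem is to reconcile three requirements: (i) the sum of the \emph{normalized} local gradients vanishes at $\x_0$ (so $\x_0$ is a fixed point of the dynamics), while (ii) $\nabla f(\x_0)$ has norm $B$, and (iii) $f$ stays $L$-smooth and bounded below by an $f_*$ that does not grow with $B$. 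Requirements (ii)--(iii) are compatible precisely because the smoothness/lower-bound inequality forces only $f(\x_0) \gtrsim B^2$, not $\min f \gtrsim B^2$: the \emph{minimizer} of $f$ is allowed to run off to infinity with $B$ while its minimum \emph{value} stays put.

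Concretely I would take $d = 1$ (if $d > 1$, apply the construction in the first coordinate and let every $f_i$ be independent of the rest), $\x_0 = 0$, $\bW = \tfrac1n \one_n \one_n^\top$ (the complete-graph averaging matrix, which is primitive and doubly stochastic with spectral gap $\lambda = 0$), and the deterministic oracle $\g_i(\cdot,\bxi_i^t) = \nabla f_i(\cdot)$, so that Assumption \ref{as:noise} holds with $\sigma = 0$. Fix any $L > 0$ and set
\[
g(x) := \tfrac{L}{2}\Big(x + \tfrac{B}{L}\Big)^2, \qquad f_i(x) := g(x) + s_i\,(B+1)\,x, \qquad s_i := +1 \ (i \le n/2), \quad s_i := -1 \ (i > n/2).
\]
Since $f_i$ differs from $g$ by an affine term it has the same constant second derivative $L$, so each $f_i$ is $L$-smooth; the local functions average to $f = g$, hence Assumption \ref{as:ldd} holds with $f_* = \min g = 0$ and Assumption \ref{as:smooth} holds with constant $L$. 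Crucially $f_*, L, \sigma, \bW, \x_0$ are all independent of $B$, although the $f_i$ (and $f$) are not.

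The main verification will be that the iterates never move. By induction: if $\x_r^t = 0$ for all $r$, then $\nabla f_i(0) = L\cdot\tfrac{B}{L} + s_i(B+1) = B + s_i(B+1)$, which equals $2B+1 > 0$ for $i \le n/2$ and $-1 < 0$ for $i > n/2$; in particular $\nabla f_i(0) \neq 0$, so $\nabla f_i(0)/\|\nabla f_i(0)\|$ is well defined and equals $s_i$. Plugging into \eqref{eq:vanilla-gd-normalization} with the uniform weights gives $\x_i^{t+1} = \sum_r \tfrac1n\big(0 - \alpha s_r\big) = -\tfrac{\alpha}{n}\big(\tfrac n2 - \tfrac n2\big) = 0$ for every $i$, completing the induction. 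Hence $\x_i^t = 0$ for all $i,t$, and
\[
\frac{1}{nT}\sum_{t=0}^{T-1}\sum_{i=1}^n \E\big[\|\nabla f(\x_i^t)\|\big] = \|\nabla f(0)\| = |g'(0)| = B \ \ge\ B
\]
for every $T \ge 1$ and $\alpha > 0$, which is the claim.

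The only genuine idea is the perturbation $g \mapsto g \pm (B+1)x$: a linear term leaves $f$, its smoothness constant, and its minimum value untouched, yet tilts half the local gradients past the origin so that the normalized directions cancel in the averaging step. I expect that finding this construction is the one nontrivial part; once it is in hand, every remaining step is a direct computation, and no heterogeneity/topology estimates are needed since the complete graph collapses the dynamics to a single centralized recursion on the (common) iterate.
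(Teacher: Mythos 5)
Your proof is correct and follows essentially the same strategy as the paper's: on the complete graph with uniform weights, split the nodes into two halves whose local gradients at the common initialization have opposite signs, so the normalized directions cancel, the iterates stay frozen at $\x_0$ for all $t$ and $\alpha$, and $\|\nabla f(\x_0)\| \ge B$ gives the bound. The only difference is the instance: the paper shifts the minima of identical quadratics ($f_i(x)=\tfrac12(x-a_i)^2$ with half the $a_i$ at $a$, half at $b$, $b-a>2B+1$), whereas you tilt a common quadratic by $\pm(B+1)x$; your variant has the minor advantage that $f_*=0$ is genuinely independent of $B$ (in the paper's instance $f_* = (b-a)^2/8$ grows with $B$), so it meets the claim's requirement on $f_*$ more literally.
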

We next break the limitations of vanilla gradient normalization in Claim \ref{cl:normaldsgd} by incorporating gradient tracking and momentum variance reduction. This enables the successful use of normalization to suppress heavy-tailed noise while maintaining optimal convergence despite the added nonlinearity. 
\section{Main Results}
\label{sec:thm}
We present the main convergence results of \gtnsgdm\  and discuss their implications. The detailed analyses are deferred to Appendix \ref{sec:analysis}. We first consider the case where the tail index $p$ is known. 

\begin{theorem}
\label{thm:main}
Let Assumptions \ref{as:ldd}, \ref{as:smooth}, \ref{as:noise}, \ref{as:network} hold. Denote $f(\ox^0) - f_* = \Delta_0, [\nabla f_1(\ox^0), \ldots, \nabla f_n(\ox^0)]^\top \\  = \nabla F(\one_n \otimes \ox^0)$. Take
\begin{align}
\label{eq:thm-alpha}
     \alpha  = \min \Big( 1, \  \sqrt{ \frac{\Delta_0 (1 - \beta)(1- \lambda)}{4L T}}, \sqrt{\frac{\Delta_0(1 - \lambda)}{ 3.5L T}}, \sqrt{\frac{(1 - \lambda)^2 \Delta_0}{2n^{\frac{1}{2}} L T}}  \Big), 
\end{align}
and $1 - \beta = 1/T^{\frac{p}{3p-2}}$. Assume $\beta \ge 1/10$, then the sequence generated by \gtnsgdm\ satisfies that
\begin{align*}
     \frac{1}{nT}\sum_{t = 0}^{T - 1} \sum_{i=1}^n \E \big[ \| \nabla  f(\x_i^t) \|] = O \Big(    \frac{ \sigma  }{n^{1- \frac{1}{p}} T^{\frac{p-1}{3p-2}} } + \frac{1}{T^{\frac{p-1}{3p-2}}} \sqrt{\frac{L \Delta_0}{ 1 - \lambda  } }  +  \frac{\| \nabla f(\ox^0) \|}{ T^{\frac{2p-2}{3p-2}}} +  \sqrt{\frac{3.5L\Delta_0}{(1 - \lambda) T}} \\
     + \sqrt{\frac{n^{\frac{1}{2}} L \Delta_0}{(1 - \lambda)^2 T}}  + \frac{\sigma n^{\frac{1}{2}} }{(1 - \lambda)^{\frac{1}{p}} T^{\frac{p}{3p-2}} }  +   \frac{ \| \nabla F(\one_n \otimes \ox^0)\| }{(1 - \lambda ) n^{\frac{1}{2}} T^{\frac{p}{3p-2}} }  + \frac{ \sigma}{1 - \lambda} \frac{  n^{\frac{1}{2}}}{ T^{\frac{2p-1}{3p-2}} } + \frac{\Delta_0}{ T} \Big). 
\end{align*}
\end{theorem}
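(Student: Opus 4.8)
The plan is to argue that the network average $\ox^t:=\tfrac1n\sum_{i=1}^n\x_i^t$ evolves like a \emph{centralized} normalized-SGD-with-momentum iterate, up to two controllable perturbations---a momentum/noise error $\nabla f(\ox^t)-\oy^t$ and consensus errors---and then to bound those perturbations via the spectral gap. Concretely, double stochasticity of $\bW$ applied to the three updates, together with $\y_i^{-1}=\bv_i^{-1}=\zero_d$, gives $\ox^{t+1}=\ox^t-\tfrac{\alpha}{n}\sum_{r=1}^n\y_r^t/\|\y_r^t\|$ (so $\|\ox^{t+1}-\ox^t\|\le\alpha$) and, by induction, $\oy^t=\ov^t=(1-\beta)\sum_{s=0}^t\beta^{t-s}\g^s$ with $\g^s:=\tfrac1n\sum_i\g_i(\x_i^s,\bxi_i^s)$; i.e.\ the average tracker is exactly the momentum of the averaged stochastic gradients. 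Applying $L$-smoothness along $\ox^t\to\ox^{t+1}$, bounding the quadratic term by $\tfrac{L\alpha^2}{2}$ (the normalized directions have unit norm) and the linear term via $-\langle\bu,\bv/\|\bv\|\rangle\le-\|\bu\|+2\|\bu-\bv\|$ (with $\bu=\nabla f(\ox^t)$, $\bv=\y_r^t$) followed by $\|\nabla f(\ox^t)-\y_r^t\|\le\|\nabla f(\ox^t)-\oy^t\|+\|\oy^t-\y_r^t\|$, then telescoping $f(\ox^0)-f_*\le\Delta_0$ and dividing by $\alpha T$, yields
\begin{align*}
\frac1T\sum_{t=0}^{T-1}\|\nabla f(\ox^t)\|\ \le\ \frac{\Delta_0}{\alpha T}+\frac{L\alpha}{2}+\frac2T\sum_{t=0}^{T-1}\big\|\nabla f(\ox^t)-\oy^t\big\|+\frac{2}{nT}\sum_{t=0}^{T-1}\sum_{r=1}^n\big\|\y_r^t-\oy^t\big\|;
\end{align*}
passing to the theorem's quantity uses $\|\nabla f(\x_i^t)\|\le\|\nabla f(\ox^t)\|+L\|\x_i^t-\ox^t\|$, which adds a further iterate-consensus term.

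\textbf{The perturbation terms.} I would bound the momentum/noise error by decomposing $\ov^t-\nabla f(\ox^t)=\bn^t+b^t$, where $\bn^t:=(1-\beta)\sum_{s=0}^t\beta^{t-s}\big(\g^s-\tfrac1n\sum_i\nabla f_i(\x_i^s)\big)$ is a scaled martingale-difference sum w.r.t.\ $\{\cF_s\}$ and $b^t$ is the history-measurable remainder. A $p$-th-moment martingale inequality in Hilbert space (valid for $p\in(1,2]$) together with the across-node conditional independence of Assumption~\ref{as:noise}---which gives $\E\|\g^s-\tfrac1n\sum_i\nabla f_i(\x_i^s)\|^p\le C\sigma^p/n^{p-1}$---and the summation of the $\beta^{(t-s)p}$ weights with $1-\beta^p\ge1-\beta$ yield $\E\|\bn^t\|^p\le C\sigma^p(1-\beta)^{p-1}/n^{p-1}$, hence $\E\|\bn^t\|\le C'\sigma(1-\beta)^{1-1/p}/n^{1-1/p}$: this is the source of the $n^{1-1/p}$ speedup. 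The bias $b^t$ is controlled by $L$-smoothness ($\|\nabla f_i(\x_i^s)-\nabla f_i(\ox^s)\|\le L\|\x_i^s-\ox^s\|$, $\|\nabla f(\ox^s)-\nabla f(\ox^t)\|\le L\alpha|t-s|$) plus a decaying initialization piece $\beta^{t+1}\nabla f(\ox^t)$ which, summed over $t$ and divided by $T$, produces the term $\|\nabla f(\ox^0)\|/((1-\beta)T)$. For the consensus errors I pass to matrix form (rows indexed by nodes) and use $\|\bW-\tfrac1n\one_n\one_n^\top\|_2=\lambda<1$: the iterates satisfy $\|\bX^{t+1}-\one_n(\ox^{t+1})^\top\|_F\le\lambda\|\bX^t-\one_n(\ox^t)^\top\|_F+\lambda\alpha\sqrt n$, hence stay $\le\lambda\alpha\sqrt n/(1-\lambda)$ (all nodes start at $\ox^0$); the tracker satisfies $\|\bY^t-\one_n(\oy^t)^\top\|_F\le\lambda\|\bY^{t-1}-\one_n(\oy^{t-1})^\top\|_F+\lambda(1-\beta)\|\bG^t-\bV^{t-1}\|_F$, so unrolling bounds $\sum_t\|\bY^t-\one_n(\oy^t)^\top\|_F$ by $\tfrac{\lambda(1-\beta)}{1-\lambda}\sum_s\|\bG^s-\bV^{s-1}\|_F$ (keeping the noise separate and applying the $p$-th-moment inequality to the $\lambda$-geometric weighted sums gives the $(1-\lambda)^{-1/p}$ prefactors). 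Finally $\|\bG^s-\bV^{s-1}\|_F$ decomposes into $\nabla F(\bX^s)-\nabla F(\bX^{s-1})$ (at most $L\|\bX^s-\bX^{s-1}\|_F=O(L\alpha\sqrt n/(1-\lambda))$), fresh per-node noise, and the per-node momentum error; absent any bounded-heterogeneity assumption, the crude bound $\|\nabla f_i(\x_i^s)\|\le\|\nabla f_i(\ox^0)\|+L\alpha s+L\|\x_i^s-\ox^s\|$ is needed and brings in $\|\nabla F(\one_n\otimes\ox^0)\|$.

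\textbf{Assembly and tuning.} Substituting these bounds into the display above, I then choose $\alpha$ as in \eqref{eq:thm-alpha}, so $1/\alpha$ is the maximum of the listed candidates and $\Delta_0/(\alpha T)$ splits into $\Delta_0/T$, $\sqrt{3.5L\Delta_0/((1-\lambda)T)}$, $\sqrt{n^{1/2}L\Delta_0/((1-\lambda)^2T)}$, and $2\sqrt{L\Delta_0/((1-\beta)(1-\lambda)T)}$; and I set $1-\beta=T^{-p/(3p-2)}$, the exponent balance for which the noise scale $(1-\beta)^{1-1/p}$ and the step-size scale $\sqrt{1/((1-\beta)T)}$ both equal $T^{-(p-1)/(3p-2)}$. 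Each summand then reduces to one of the terms in the statement; $\beta\ge1/10$ keeps the $\beta$-dependent constants (e.g.\ $\tfrac{1-\beta}{1-\beta^p}$ and $\tfrac{\beta}{1-\beta}$-type prefactors) bounded, and leftover pieces such as $L\alpha\beta/((1-\beta)^2T)$ are checked to decay at least as fast as $T^{-(p-1)/(3p-2)}$.

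\textbf{Main obstacle.} I expect the hard step to be the gradient-tracker consensus bound---controlling $\|\bG^s-\bV^{s-1}\|_F$ with no bounded-gradient or bounded-heterogeneity hypothesis, while staying compatible with the momentum variance-reduction argument: one must retain the $n^{1-1/p}$ gain in the dominant network-averaged noise term without over-claiming it in the per-node tracker terms, propagate the $(1-\lambda)$-dependence sharply through the nested $\lambda$-contraction and $\beta$-momentum convolution, and verify that the linearly-growing-in-$s$ gradient bound is damped by the geometric factor $\lambda^{t-s}$. This coupled bookkeeping between the $\ov$-recursion and the $\bY$-recursion is where essentially all of the technical effort concentrates.
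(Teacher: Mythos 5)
Most of your outline coincides with the paper's actual proof: the descent lemma along $\ox^t$ with the normalized step, the split of $\oy^t-\nabla f(\ox^t)$ into a $\beta$-weighted martingale part (handled by a $p$-th moment MDS inequality, giving the $(1-\beta)^{1-1/p}n^{-(1-1/p)}\sigma$ scale) plus a smoothness-controlled drift and a geometrically decaying initialization term, the bound $\frac1n\sum_i\|\x_i^t-\ox^t\|\le\alpha\lambda/(1-\lambda)$, the tracker-consensus recursion driven by $\bv^t-\bv^{t-1}\propto(1-\beta)(\g(\x^t,\bxi^t)-\bv^{t-1})$ decomposed into fresh noise, drift, and the stacked momentum error, and the same $\alpha,\beta$ tuning. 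The one place where you deviate is exactly the place you flag as the ``main obstacle,'' and there your plan has a genuine gap: you propose to control the stacked momentum error $\bv^{s-1}-\nabla F(\x^{s-1})$ inside the tracker recursion by the crude per-node bound $\|\nabla f_i(\x_i^s)\|\le\|\nabla f_i(\ox^0)\|+L\alpha s+L\|\x_i^s-\ox^s\|$, hoping the factor $\lambda^{t-s}$ damps the linear-in-$s$ growth. It does not: $\sum_{s\le t}\lambda^{t-s}(c_0+c_1 s)\le (c_0+c_1 t)/(1-\lambda)$, so the geometric weights bound the weight mass but the dominant contribution sits at $s\approx t$ and remains linear in $t$. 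Tracking the constants, this route injects into the time-averaged bound a term of order $(1-\beta)\sqrt n\,L\alpha T/(1-\lambda)$, which even with the smallest admissible step size in \eqref{eq:thm-alpha} is $\Theta\big((1-\beta)^{3/2}\sqrt{n\,L\Delta_0 T/(1-\lambda)}\big)=\Theta\big(\sqrt n\sqrt{L\Delta_0/(1-\lambda)}\,T^{-1/(3p-2)}\big)$. For $p>4/3$ this decays more slowly than $1/\sqrt T$ while carrying an $n^{1/2}$ factor, and at $p=2$ it is an $n^{1/2}$-inflated copy of the leading $T^{-1/4}$ term; it is not dominated by any term in the theorem's stated bound (e.g.\ take $\sigma$ small and $n$ large), so your argument as written proves a strictly weaker statement. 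The same crude bound also loses the $n^{-1/2}$ factor on the $\|\nabla F(\one_n\otimes\ox^0)\|$ term, since you would pick up $\sum_i\|\nabla f_i(\ox^0)\|$ at every $s$ rather than a single geometrically damped initialization contribution.

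The fix — and what the paper does (its Lemma on stacked gradient estimation errors) — is to treat the stacked error by exactly the same momentum-unrolling you already use for the averaged error: $\bv^t-\nabla F(\x^t)=\beta^{t+1}\big(\bv^{-1}-\nabla F(\one_n\otimes\ox^0)\big)+\sum_k\beta^{t-k}(1-\beta)\,\ts^k+\sum_k\beta^{t-k+1}\tz^k$, where $\ts^k$ is fresh noise (MDS inequality, cost $O\big(n\sigma(1-\beta)^{1-1/p}\big)$) and $\tz^k=\nabla F(\x^{k-1})-\nabla F(\x^k)$ has norm at most $n\big(\tfrac{2\alpha\lambda}{1-\lambda}+\alpha\big)L$ \emph{uniformly in $k$}, so the $\beta$-convolution gives $O\big(\tfrac{n\alpha L}{(1-\beta)(1-\lambda)}\big)$ with no linear-in-$t$ growth, and the initialization enters only through the damped term $\beta^{t+1}\|\nabla F(\one_n\otimes\ox^0)\|$. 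Feeding this into your $\lambda$-weighted tracker-consensus sum, multiplied by the $(1-\beta)$ (or $\tfrac1\beta-1$) prefactor, reproduces precisely the theorem's terms $\sigma n^{1/2}(1-\lambda)^{-1/p}T^{-p/(3p-2)}$, $\|\nabla F(\one_n\otimes\ox^0)\|(1-\lambda)^{-1}n^{-1/2}T^{-p/(3p-2)}$, $\sigma n^{1/2}(1-\lambda)^{-1}T^{-(2p-1)/(3p-2)}$, and $\sqrt{n^{1/2}L\Delta_0/((1-\lambda)^2T)}$; no bounded-gradient or bounded-heterogeneity surrogate is needed. With that replacement your assembly and tuning step goes through as you describe (your per-node inequality $-\langle\bu,\bv/\|\bv\|\rangle\le-\|\bu\|+2\|\bu-\bv\|$ and the resulting constant-factor differences from the paper's Lemma are immaterial, as is the $\beta^{t+1}\nabla f(\ox^t)$ versus $\beta^{t+1}\nabla f(\ox^0)$ bookkeeping in the averaged-error initialization).
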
 

\begin{remark}[Order-optimal rate] Theorem \ref{thm:main} establishes a non-asymptotic upper bound on the mean $\ell_2$ norm stationary gap of \gtnsgdm\ over any finite time horizon $T$. The $O(\cdot)$ here only absorbs universal constants and preserves all problem parameters. It achieves the \textit{optimal} $O\big( 1/T^{\frac{p - 1}{3p-2}} \big)$ convergence rate in terms of $T$ as it matches the lower bound proved in \citet{zhang2020adaptive}. This optimal guarantee is achieved in decentralized settings \textit{for the first time.} 
\end{remark}

\begin{remark}[Speedup in $n$]
\label{rm:n_speedup} We discuss the asymptotic speedup in number of nodes $n$. For sufficiently large $T$ (or sufficiently small target optimality gap), the upper bound in Theorem \ref{thm:main} is dominated by the leading terms $(1/T^{\frac{p-1}{3p-2}}) \big( \sigma / n^{1 - 1/p}  + \sqrt{L \Delta_0 / (1- \lambda)} \big)$. In the high-noise regime $\sigma \gg n^{1-1/p } \sqrt{L \Delta_0 / (1 - \lambda)}$, the upper bound has a speedup factor $n^{1- 1/p }$. In practice, the noise scale (measured by $\sigma$)  in training attention models or in other high-dimensional problems can be very large, and the speedup in $n$ contributes as a noise reduction.
\end{remark} 



When the tail index $p$ is unknown in advance, we establish the following convergence rate. 
\begin{theorem}
\label{thm:p_indep_cvg}
Let Assumptions \ref{as:ldd}, \ref{as:smooth}, \ref{as:noise}, \ref{as:network} hold and take $\alpha$ as in \eqref{eq:thm-alpha}. Take $1- \beta = 1/\sqrt{T}$ and assume $\beta \ge 1/10$. Then \gtnsgdm\ guarantees that
\begin{align*}
    \frac{1}{nT}\sum_{t = 0}^{T - 1} \sum_{i=1}^n \E \big[ \| \nabla  f(\x_i^t) \|] 
    \le    O \Big( \frac{\sigma}{n^{1 - \frac{1}{p}} T^{\frac{p - 1}{2p}}} + \frac{1}{T^{\frac{1}{4}}} \sqrt{\frac{L \Delta_0}{1 - \lambda}} + \frac{\| \nabla f(\ox^0) \|}{\sqrt{T} }  +  \sqrt{\frac{3.5L\Delta_0}{(1 - \lambda) T}}  \\
     + \frac{\sigma n^{\frac{1}{2}}}{(1 - \lambda)^{\frac{1}{p}} \sqrt{T} } + \frac{ \| \nabla F(\one_n \otimes \ox^0)\| }{(1 - \lambda)n^{\frac{1}{2}} \sqrt{T} } + \sqrt{\frac{n^{\frac{1}{2}} L \Delta_0}{(1 - \lambda)^2 T}}  + \frac{ \sigma n^{\frac{1}{2}} }{(1 - \lambda)T^{\frac{2p - 1}{2p}}} + \frac{\Delta_0}{T} \Big).
\end{align*}
\end{theorem}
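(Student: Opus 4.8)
\textbf{Proof proposal for Theorem \ref{thm:p_indep_cvg}.}

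The plan is to reuse essentially the entire machinery developed for Theorem \ref{thm:main} and simply re-balance the momentum parameter. Inspecting the structure of the proof of Theorem \ref{thm:main}, the analysis must produce, before specializing $\beta$, a master inequality of the schematic form
\begin{align*}
    \frac{1}{nT}\sum_{t=0}^{T-1}\sum_{i=1}^n \E\big[\|\nabla f(\x_i^t)\|\big]
    \lesssim \frac{\Delta_0}{\alpha T} + \frac{\alpha L}{1-\beta}
    + \frac{\sigma}{n^{1-1/p}}(1-\beta)^{p-1}
    + (\text{consensus terms}) + \frac{\|\nabla f(\ox^0)\|}{(1-\beta)T} + \cdots,
\end{align*}
where the consensus/network-error terms carry the $(1-\lambda)$ and $n^{1/2}$ factors together with powers of $\alpha$ and $(1-\beta)$, and where the only place the tail index $p$ enters the momentum bias is through the term $\sigma\,(1-\beta)^{p-1}$ (the bias of momentum variance reduction under a $p$-th moment oracle, as in \citet{liu2025nonconvex}). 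First I would extract exactly this intermediate bound from the proof of Theorem \ref{thm:main} — i.e., stop the derivation one step before the choices $\alpha,\beta$ are plugged in — and record it as the common starting point.

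Next I would substitute the stated choices $\alpha$ as in \eqref{eq:thm-alpha} and $1-\beta = 1/\sqrt{T}$. With $1-\beta = T^{-1/2}$ the momentum-bias term becomes $\sigma\, T^{-(p-1)/2}\big/n^{1-1/p}$; combined with the $\alpha$-dependent ``optimization'' term $\alpha L/(1-\beta) \asymp \sqrt{L\Delta_0/(1-\lambda)}\cdot T^{-1/2}\cdot T^{1/2}\cdot T^{-1/2}$... — more carefully, with $\alpha \asymp \sqrt{\Delta_0(1-\lambda)/(LT)}$ the term $\alpha L/(1-\beta) \asymp \sqrt{L\Delta_0/(1-\lambda)}\cdot T^{-1/2}\cdot T^{1/2} = \sqrt{L\Delta_0/(1-\lambda)}$, which is too large; so in fact the dominant $\beta$-controlled optimization contribution is the cross term and one must track which of $\alpha/(1-\beta)$ versus $(1-\beta)$ dominates. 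The honest bookkeeping is: the leading rate is $\max\{T^{-(p-1)/(2p)},\,T^{-1/4}\}$-type, and since $(p-1)/(2p) \le 1/4$ for all $p\in(1,2]$, with equality at $p=2$, the claimed exponents $1/T^{(p-1)/(2p)}$ on the $\sigma/n^{1-1/p}$ term and $1/T^{1/4}$ on the $\sqrt{L\Delta_0/(1-\lambda)}$ term are precisely what falls out. I would then verify term-by-term that each remaining summand — $\|\nabla f(\ox^0)\|/(1-\beta)T = \|\nabla f(\ox^0)\|/\sqrt{T}$, the gradient-tracking initialization term $\|\nabla F(\one_n\otimes\ox^0)\|/((1-\lambda)n^{1/2}\sqrt{T})$, the three consensus terms with their $(1-\lambda)^{-1},(1-\lambda)^{-1/p},(1-\lambda)^{-2}$ and $n^{1/2}$ factors, and the residual $\Delta_0/T$ — matches the right-hand side of the statement under these substitutions, using $(1-\beta)^{p} = T^{-p/2}$ so that $\sigma n^{1/2}(1-\beta)^{p-1}\big/(1-\lambda)^{1/p}$-type terms collapse to $\sigma n^{1/2}/((1-\lambda)^{1/p}\sqrt T)$ (using $(1-\beta)^{p-1}=T^{-(p-1)/2}\le T^{-1/2}$ only loosely; more likely the relevant exponent there is exactly $1/2$ because that consensus term carries $(1-\beta)^1$ not $(1-\beta)^{p-1}$) and $\sigma n^{1/2}(1-\beta)^{2(p-1)}$-type terms collapse to $\sigma n^{1/2}/((1-\lambda)T^{(2p-1)/(2p)})$.

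The main obstacle is not conceptual but careful: I must make sure the intermediate inequality I import from the proof of Theorem \ref{thm:main} genuinely holds for the range of $\beta$ relevant here (the hypothesis $\beta \ge 1/10$ is retained, and $1-\beta = 1/\sqrt T \le 1/10$ needs $T\ge 100$, which is absorbed into the $O(\cdot)$), and that the condition $\alpha \le 1$ from the $\min$ in \eqref{eq:thm-alpha} together with $\alpha/(1-\beta)$ small enough to close whatever descent-lemma step requires a small effective step size — i.e., the argument of the form ``$\alpha/(1-\beta) \le c$ for some universal $c$'' — remains valid when $1-\beta=T^{-1/2}$ rather than $T^{-p/(3p-2)}$. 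Since $\alpha \lesssim T^{-1/2}$ and $1-\beta = T^{-1/2}$, the ratio $\alpha/(1-\beta)$ is $O(1)$ and can be made $\le c$ by the constants hidden in $\alpha$, so this goes through; I would state this explicitly as the one place where the choice $1-\beta=1/\sqrt T$ (as opposed to something smaller) is what makes the bound hold. Everything else is a mechanical re-substitution, so the proof reduces to: (i) quote the pre-specialization master bound from Appendix \ref{sec:analysis}; (ii) plug in $1-\beta = T^{-1/2}$ and $\alpha$ from \eqref{eq:thm-alpha}; (iii) simplify each of the nine terms and match.
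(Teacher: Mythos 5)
Your proposal is correct and follows essentially the same route as the paper: the paper's proof of Theorem~\ref{thm:p_indep_cvg} simply observes that the pre-specialization bound \eqref{eq:opt_final_bound}(ii) holds for the same $\alpha$ in \eqref{eq:thm-alpha} and any $\beta \ge 1/10$, then substitutes $1-\beta = 1/\sqrt{T}$ and simplifies term by term, exactly as you outline (and no extra ``$\alpha/(1-\beta)\le c$'' small-stepsize condition is actually needed, since the descent recursion holds without it). The only slips are cosmetic: in your schematic master bound the momentum-bias terms carry $(1-\beta)^{1-\frac{1}{p}}$ and $(1-\beta)^{2-\frac{1}{p}}$, not $(1-\beta)^{p-1}$ and $(1-\beta)^{2(p-1)}$, which is immaterial since you import the exact intermediate inequality and your final exponents $T^{-\frac{p-1}{2p}}$, $T^{-\frac{1}{4}}$, and $T^{-\frac{2p-1}{2p}}$ match the paper's.
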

Theorem \ref{thm:p_indep_cvg} establishes an upper bound of $O(1/T^{\frac{p-1}{2p}})$ when the tail index $p$ is unknown, matching the best-known rate in the \textit{centralized} setting where algorithm parameters do not rely on $p$ \citep{liu2025nonconvex}. While the convergence rate in \citet{yu2023smoothed} is also independent of the knowledge of $p$, it is only for strongly convex functions and its exact rate exponent remains unspecified.

\begin{remark}[Speedup in $n$ and topology independent rate]
\label{rm:improved_dependence}
Consider $p \in (1, 2)$, i.e., the heavy-tailed case with unbounded variance this paper focuses on. When $T$ is sufficiently large (as required to achieve sufficiently small target optimality gap), the upper bound in Theorem \ref{thm:p_indep_cvg} is dominated by $\frac{\sigma}{n^{1-1/p}} \cdot \frac{1}{T^{(p-1)/2p}}$. Significantly, this upper bound is \textit{independent} of network topology $(\lambda)$ and exhibits a speedup factor $n^{1-1/p }$ in all regimes.
\end{remark} 
\begin{remark}[Hyperparameter selection] \reb{ We compare with the centralized work \citet{liu2025nonconvex} in terms of hyperparameter selections. When $p$ is known, the centralized setup requires $\Delta_0, L, \sigma, T$, while our decentralized setup requires $\Delta_0, L, T, \lambda, n$. Notably, our selection is independent of $\sigma$ but requires the network parameters $n, \lambda$. When $p$ is unknown, the centralized setup requires $L, T$, while the decentralized case additionally requires $\Delta_0, \lambda, n$. We note that $n$ is generally easy to estimate, and we give an example where our hyperparameter dependence on $\lambda$ can be eliminated. If all nodes know $n$ and set weights $\bW = \bI_n - (1/n)\bL$ where $\bL$ is the graph Laplacian, one can obtain the estimate $\lambda \le 1 - c/n^3$ \citep{mohar1991laplacian} for some universal constant $c$. Once we have such an upper bound, we can replace the terms involving $1/(1-\lambda)$ with this estimate in equation \eqref{eq:opt_final_bound}; then in the final proof steps, we can select a step size that is independent of $\lambda$ while still obtaining the same order-optimal rate.}
\end{remark} 
\begin{remark}[Dominant terms for small $T$] 
\reb{Both Theorems 1 and 2 order the upper bound terms in an increasing order of the rate exponent of $1/T$, so the dominant terms are clear for sufficiently large $T$. We briefly discuss some cases to illustrate the effects of other parameters including $n$, $\lambda$, and $p$ when $T$ is small. We discuss one varying parameter while assuming other parameters are fixed. First, for network connectivity $1 - \lambda$, $1/{T^{  \frac{p - 1}{3p - 2}}}$ has coefficient $\frac{1}{\sqrt{1 - \lambda}}$ while the faster decaying term $\frac{1}{\sqrt{T}}$ has coefficient $\frac{1}{1 - \lambda}$. When $1 - \lambda \le \frac{n^{1/2}}{T^{p/(3p - 2)}}$, the latter term $\frac{1}{\sqrt{T}} \sqrt{\frac{n^{1/2} L \Delta_0}{(1 - \lambda)^2}}$ will dominate. Second, for $n$, if $n^{1/2} \ge (1 - \lambda)T^{\frac{p}{3p-2}}$, we will also have a dominant term $\frac{1}{\sqrt{T}} \sqrt{\frac{n^{1/2} L \Delta_0}{(1 - \lambda)^2}}$. Third, for $p \in (1, 2]$: When $p$ approaches $1$ from above, $\frac{1}{T^{(p-1)/(3p-2)}}$ will have coefficient $\sqrt{\frac{L \Delta_0}{1 - \lambda}} + \sigma$ and the speedup in terms of the number of nodes will vanish, but the change in $\frac{1}{T^{p/(3p-2)}} \frac{\sigma n^{1/2}}{(1 - \lambda)^{1/p}}$ will not introduce a new dominant term if we focus on the dependence on $1 - \lambda$ since there already exists a slower term $\frac{1}{\sqrt{T}} \frac{n^{1/2}}{1 - \lambda}$. Similarly, when $p$ approaches $2$, the speedup in $n$ is more significant; the term $\frac{1}{T^{p/(3p-2)}} \frac{\sigma n^{1/2}}{(1 - \lambda)^{1/p}}$ only gets smaller and does not change the dominant terms.  Although our rates are established for general $p \in (1, 2]$, we can compare parameter dependence with rates obtained for $p = 2$ only. For example, in \citet{xin2021improved}, for large enough $T$, the upper bound is $$
O\Big( \frac{\Delta_0 + \sigma \sqrt{L}}{n^{1/4} T^{1/4}} + \frac{ \sqrt{n} \sigma L }{ (1 - \lambda^2)^{3/2} \sqrt{T} } + \frac{ L \| \nabla F(\one_n \otimes \ox^0) \| }{ (1 - \lambda^2)^{3/2} T }  \Big). 
$$
If we set $p = 2$ in our bounds, dropping some higher order terms, we have 
$$ 
O\Big( \frac{1}{T^{1/4}} \cdot \big(\sqrt{\frac{L \Delta_0}{ 1 - \lambda  } } + \frac{\sigma}{n^{1/2}}\big) + \frac{1}{\sqrt{T}} \cdot \big( \| \nabla f({\ox}^0) \| + \sqrt{\frac{3.5L\Delta_0}{1 - \lambda}}  + \sqrt{\frac{n^{\frac{1}{2}} L \Delta_0}{(1 - \lambda)^2}} + \frac{\sigma n^{\frac{1}{2}} }{(1 - \lambda)^{ 1/2 } } \big) \Big).
$$
We compare the above two bounds. For coefficients of $1/T^{1/4}$, our bound has dependence $1/n^{1/2}$ which is faster than the $1/n^{1/4}$ in the $p = 2$ case, but the dependence in $1/\sqrt{1 - \lambda}$ is worse. For coefficients of $1/\sqrt{T}$, our bounds have dependence $1/(1 - \lambda)$ while the $p=2$ case has dependence $1/(1-\lambda^2)^{3/2}$, which is strictly larger than ours.}
\end{remark}
\section{Experiments} 
\label{sec:exp}
We assess the performance of \gtnsgdm\ through numerical experiments. We first conduct studies on synthetic datasets that mimic language modeling under controlled heavy-tailed noise injection, following \citet{lee2025efficient}. We also present experiments on decentralized training of a decoder-only Transformer (GPT) model with 3M parameters on the Multi30k dataset.

\textbf{Baselines.} We compare \gtnsgdm\ with four decentralized baselines: \dsgd\citep{nedic2009distributed}, \gtdsgd\ \citep{xin2020general}, \dsgdgclip\ \citep{sun2024distributed}, and \texttt{SClip-EF} \texttt{-Network} \citep{yu2023smoothed}. \dsgd\ and \gtdsgd\ handle regular stochastic noise with bounded variance. \dsgdgclip\ converges for strongly convex functions under bounded domains or gradients \citep{sun2024distributed}. \sen\ achieves convergence under symmetric noise with bounded $\E \big[ \| \bxi_i^t \|^p \mid \cF_{t- 1} \big]$ for $p = 1$. All methods are initialized identically and tuned via grid search. Detailed baseline descriptions appear in Table \ref{tab:baselines} (Appendix \ref{subsec:baselines}).

\textbf{Graph topology.} We consider three graph topologies: undirected ring, directed exponential, and complete graphs (see \citet{lian2017can, nedic2018network, assran2019stochastic}). Weight matrices use Metropolis weights \citep{xiao2005scheme}.  For synthetic experiments, we set the number of nodes to $n = 20$, we obtain $\lambda = 0.904$, $0.714$, and $0$ for the ring, exponential, and complete graphs, respectively. For Transformer training with $n = 8$, we have corresponding $\lambda = 0.804$, $0.6$, and $0$.

\subsection{Robust linear regression on synthetic tokenized data} 

We use this synthetic experiment to test our convergence rates under controlled heavy-tailed noise. We consider nonconvex regularized linear regression on synthetic data mimicking language tokens. In language modeling, token frequencies exhibit heavy-tailed distributions: few tokens appear frequently, while most are rare but contextually important. We construct the following synthetic dataset $\bX$ of 1k samples of dimension $d = 20$. The first four features simulate frequent tokens and are sampled independently from Bernoulli distributions: the first two from $\mathrm{Bern}(0.9)$ and the next two from $\mathrm{Bern}(0.5)$. The remaining 16 features represent rare tokens, each sampled from $\mathrm{Bern}(0.1)$. The optimal weight $\w_*$ is Gaussian-sampled, with labels $\y = \bX \w_*$. The synthetic dataset $(\bX, \y)$ is evenly distributed over $n = 20$ nodes, where each node $i$ holds a sub-dataset $(\bX_i, \y_i)$, estimate $\w_i$, and a linear regression model with nonconvex robust Tukey's biweight loss function \citep{beaton1974fitting} to estimate $\w_*$. We inject three different \textit{zero-mean} noises, Gaussian noise ($\cN(\zero, 3\bI_d)$), Student's $t$ noise (degrees of freedom $1.5$, scale $1.0$), and Lévy $\alpha$-stable noise (stability parameter $1.5$, skewness parameter $0.5$, scale $1.0$, non-symmetric, multiplied by $0.1$) into the exact gradient, using corrupted stochastic gradients for updates. See Appendix \ref{subsec:synthetic_add} for additional details.

In Figure \ref{fig:ring-synthetic-noises}, we evaluate \gtnsgdm\ against baselines on ring graphs under various gradient noise. \dsgd\ and \gtdsgd\ converge under Gaussian noise but become unstable under heavy-tailed noise. \dsgdgclip\ remains stable but fails to reach the optimum. Both \gtnsgdm\ and \sen\ exhibit robust convergence and near-optimal performance across all scenarios, consistent with their theoretical guarantees under heavy-tailed noise. 


\begin{figure}[ht]
\centering 
\begin{subfigure}[b]{0.32\textwidth}
    \includegraphics[width=\linewidth]{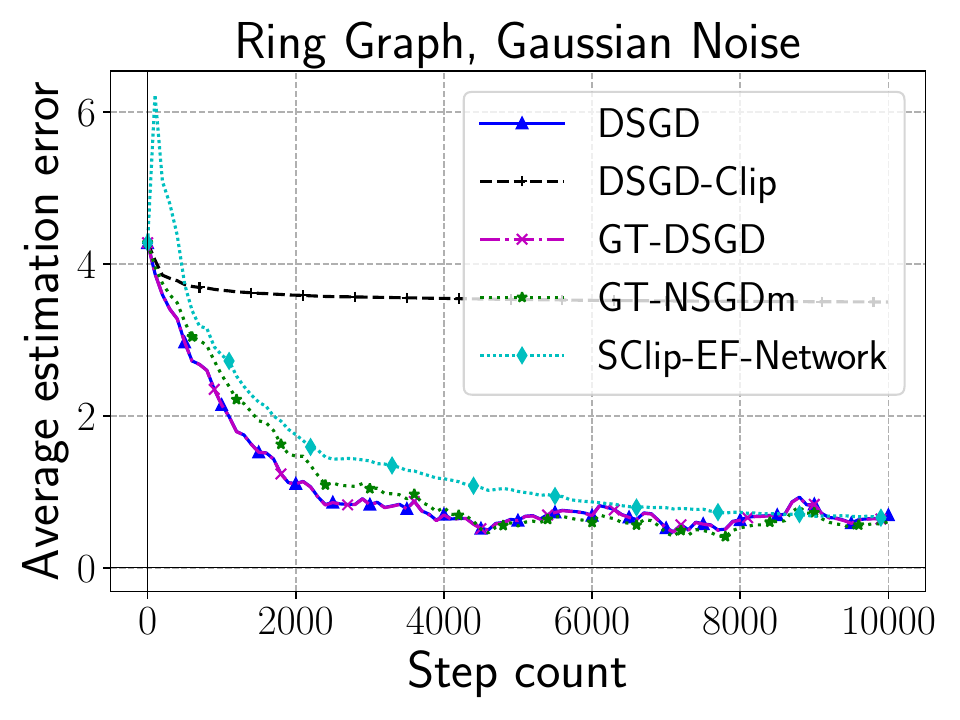}
    \caption{Gaussian Noise}
\end{subfigure}
\hfill
\begin{subfigure}[b]{0.32\textwidth}
    \includegraphics[width=\linewidth]{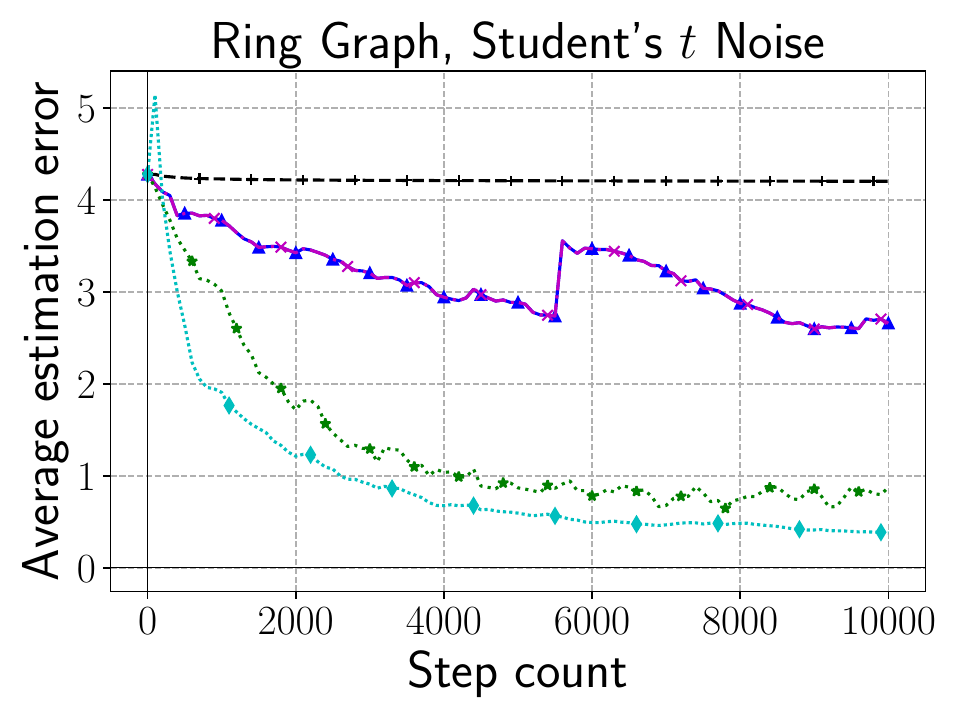}
    \caption{Student's $t$ noise}
\end{subfigure}
\hfill
\begin{subfigure}[b]{0.32\textwidth}
    \includegraphics[width=\linewidth]{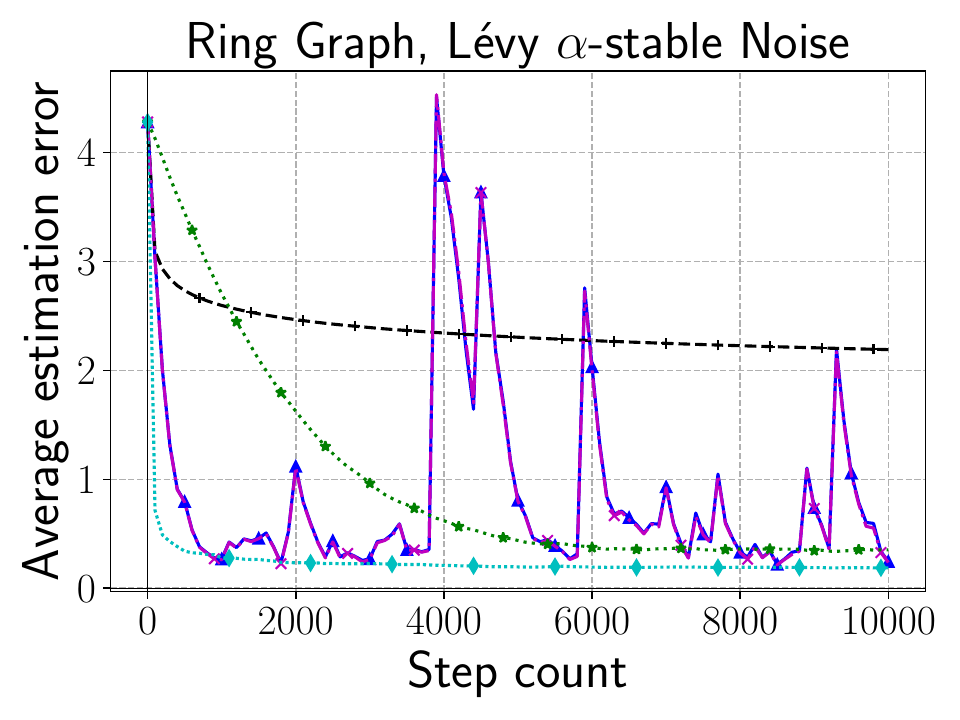}
    \caption{Lévy $\alpha$-stable noise}
\end{subfigure}
\caption{Comparison of performance on a ring graph under various types of injected stochastic gradient noise, measured by the average estimation error $(1/n) \sum_{i=1}^n \| \w_i^t - \w_* \|$ over step count $t$.} 
\label{fig:ring-synthetic-noises}
\end{figure}

In Figure \ref{fig:bounds_dependence}, we test \gtnsgdm’s dependence on connectivity ($\lambda$), noise level ($\sigma$), and the number of nodes ($n$), varying each while fixing others. In Figure \ref{fig:bounds_dependence}(a), we inject Lévy $\alpha$-stable noise and test the performance of \gtnsgdm\ on ring, directed exponential, undirected exponential, and complete graphs with $\lambda = 0.904, 0.714, 0.6, 0$, respectively. \gtnsgdm\ achieves comparable final errors under weak connectivity (i.e., large $\lambda$) versus complete graphs, showing favorable dependence on network connectivity under heavy-tailed noise. In Figure \ref{fig:bounds_dependence}(b), we evaluate \gtnsgdm’s performance under different noise levels on a directed exponential graph. Under Gaussian noise with scale 1 (unit variance), \gtnsgdm\ reaches the best optimality; the final error increases as $\sigma$ grows, as observed under both Gaussian and Lévy $\alpha$-stable noise. In Figure \ref{fig:bounds_dependence}(c), we inject Lévy $\alpha$-stable noise on complete graphs ($\lambda = 0$ for all $n$) with varying number of nodes. As $n$ increases from 2 to 40, convergence speed improves with final errors $[0.4, 0.35, 0.29, 0.20, 0.21]$, demonstrating speedup over certain $n$ ranges, supporting theoretical discussions in Remarks \ref{rm:n_speedup} and \ref{rm:improved_dependence}. 


\begin{figure}[ht!]
\centering 
\begin{subfigure}[b]{0.32\textwidth}
    \includegraphics[width=\linewidth]{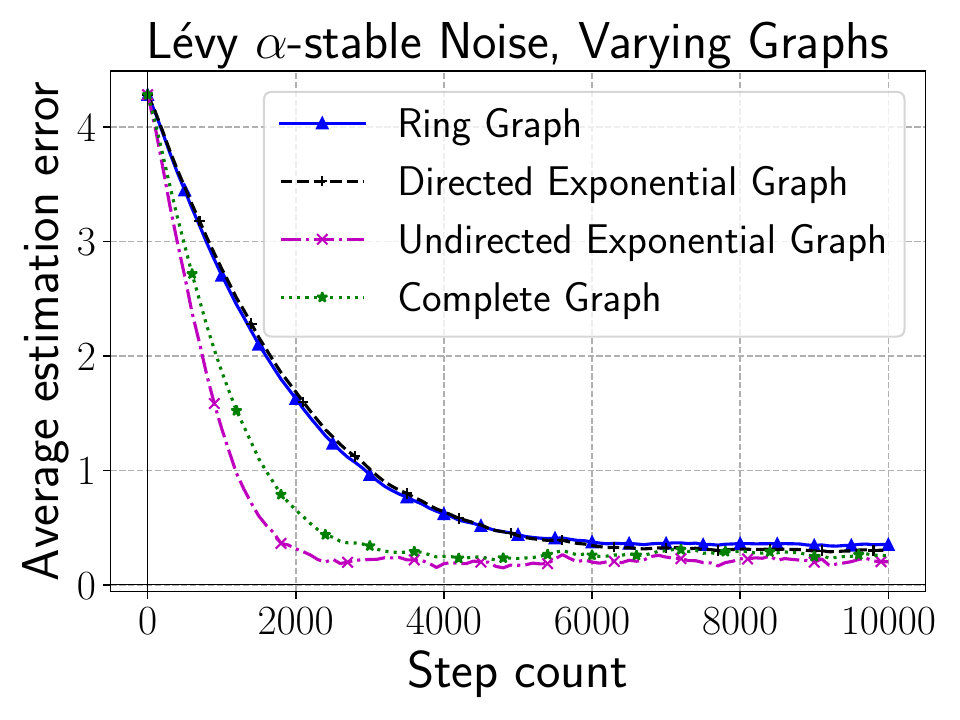}
    \caption{Dependence on $\lambda$}
\end{subfigure}
\hfill
\begin{subfigure}[b]{0.32\textwidth}
    \includegraphics[width=\linewidth]{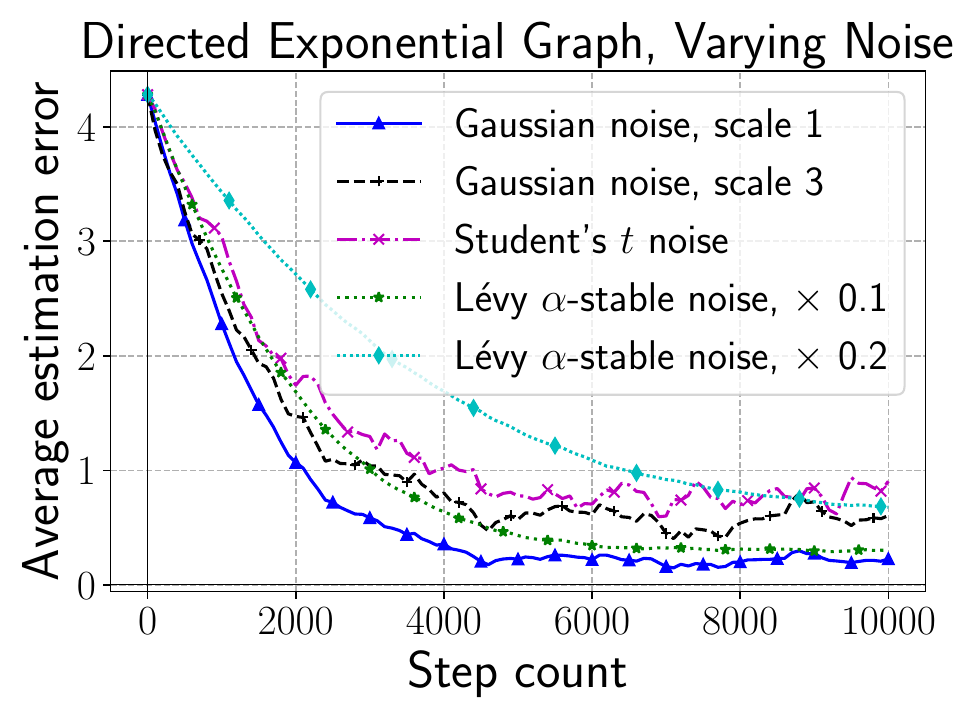}
    \caption{Dependence on $\sigma$}
\end{subfigure}
\hfill
\begin{subfigure}[b]{0.32\textwidth}
    \includegraphics[width=\linewidth]{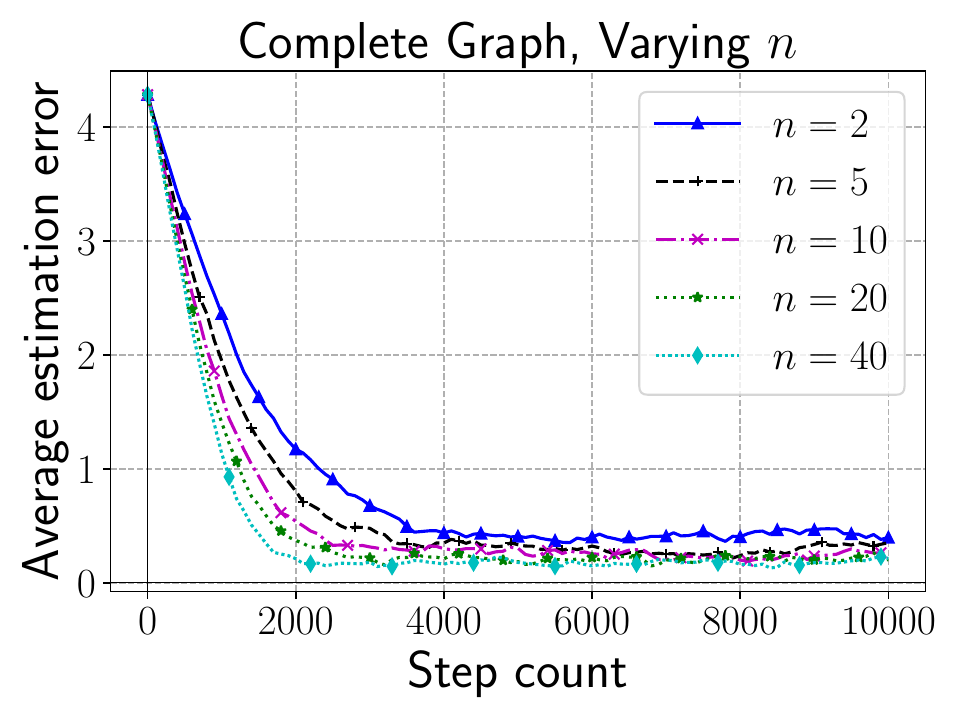}
    \caption{Dependence on $n$}
\end{subfigure}
\caption{Empirical studies on \gtnsgdm's dependence on problem parameters $\lambda, \sigma, n$.}
\label{fig:bounds_dependence}
\end{figure} 

\subsection{Decentralized training of Transformers} 
We evaluate \gtnsgdm's empirical performance on language modeling using a 3M-parameter GPT model \citep{radford2018improving} for auto-regressive modeling on Multi30k (29k sentences, 4.4M tokens). We assess performance using validation log-perplexity. On 8-node graphs with three topologies, we distribute training data evenly and initialize identical GPT models per node. We introduce three additional baselines: \texttt{DSGD-GClip} (\texttt{DSGD} with constant step size and $\ell_2$ gradient clipping level), \texttt{DSGD-CClip} (\texttt{DSGD} with constant step size and component-wise gradient clipping level), \texttt{QG-DSGDm} \citep{lin2021quasi} and \texttt{GT-Adam} \citep{carnevale2022gtadam} (all without theoretical guarantees under heavy-tailed noise; see Table \ref{tab:baselines} in Appendix \ref{subsec:baselines}). We run all methods for 12 epochs with batch size 64. See Appendix \ref{subsec:dec_training} for model and hyperparameter details. 

Table \ref{tab:gpt-training-val} presents average validation loss and standard deviation over five independent runs for each algorithm across three topologies. Results show that \gtnsgdm\ nearly matches the best baseline \texttt{DSGD-GClip} (which lacks theoretical guarantees under heavy-tailed gradient noise) while significantly outperforming the other two theoretically-guaranteed baselines across all topologies. We note that this decentralized training experiment is simulated to demonstrate algorithm effectiveness and has practical limitations.


\begin{table}[h] 
\centering
\renewcommand{\arraystretch}{1.3} 
\caption{Topologies ring, directed exponential (Exp.), and complete (Comp.) graphs. Algorithms are grouped by theoretical (Theo.) guarantees under heavy-tailed noise: with (w/) or without (w/o).} 
\label{tab:gpt-training-val} 
\begin{tabular}{|c|c|c|c|c|}
\hline
Algorithms & Theo. & Ring & Exp. & Comp. \\ \hline
\texttt{DSGD} & w/o & $5.633_{\pm 0.008}$ & $5.632_{\pm 0.007}$ & $5.635_{\pm 0.007}$ \\ \hline
\texttt{DSGD-GClip} & w/o & $\mathbf{0.253}_{\pm 0.007}$ & $\mathbf{0.249}_{\pm 0.010}$ & $\mathbf{0.267}_{\pm 0.010}$ \\ \hline
\texttt{DSGD-CClip} & w/o & $2.725_{\pm 3.179}$ & $5.058_{\pm 2.388}$ & $8.225_{\pm 1.695}$ \\ \hline
\texttt{GT-DSGD} & w/o & $5.362_{\pm 0.002}$ & $5.632_{\pm 0.002}$ & $5.631_{\pm 0.002}$ \\ \hline
\texttt{GT-Adam} & w/o & $0.520_{\pm 0.038}$ & $0.587_{\pm 0.096}$ & $0.524_{\pm 0.045}$ \\ \hline
\texttt{QG-DSGDm} & w/o & $0.394_{\pm 0.007}$ & $0.388_{\pm 0.013}$ & $0.353_{\pm 0.011}$ \\ \hline
\texttt{SClip-EF-Network} & w/ & $5.653_{\pm 0.012}$ & $5.632_{\pm 0.003}$ & $5.636_{\pm 0.004}$ \\ \hline
\texttt{DSGD-Clip} & w/ & $5.633_{\pm 0.004}$ & $5.659_{\pm 0.013}$ & $5.661_{\pm 0.006}$ \\ \hline
\gtnsgdm & w/ & $\mathbf{0.258}_{\pm 0.007}$ & $\mathbf{0.261}_{\pm 0.007}$ & $\mathbf{0.282}_{\pm 0.009}$ \\ \hline
\end{tabular}
\end{table}

\section{Conclusion and Future Work}
\label{sec:conclusion}
In this paper, we have proposed \gtnsgdm\ for solving decentralized nonconvex smooth optimization to address heavy-tailed noise. The key idea is to leverage normalization, together with momentum variance reduction, to combat heavy-tailed noise, and use gradient tracking to handle cross-node heterogeneity and the nonlinearity introduced by normalization. Theoretical analyses establish that \gtnsgdm\ attains an optimal convergence rate when the tail index $p$ is known, and a rate that matches the best centralized one when $p$ is unknown. Extensive experiments on nonconvex linear regression and decentralized Transformer training show that \gtnsgdm\ is robust and efficient under heavy-tailed noise across various topologies, and achieves a speedup in $n$. Future directions include extending the current analysis to other nonlinearities, such as sign and clipping \citep{zhang2020adaptive}, and generalizing \gtnsgdm\ to handle objective functions under relaxed smoothness conditions \citep{liu2025nonconvex}. 
\section*{Acknowledgments}
We thank the anonymous reviewers for their valuable feedback that improved the presentation of this paper and for identifying minor errors in the proofs. The work of S. Yu and S. Kar is supported in part by NSF grant ECCS 2330196. The work of D. Jakovetic was supported by the Ministry of Science, Technological Development, and Innovation (Grants No. 451-03-33/2026-03/ 200125 \& 451-03-34/2026-03/ 200125); and the Science Fund of Republic of Serbia, project ``LASCADO'' (grant 7359).

\bibliographystyle{ieeetr}
\bibliography{refs}

\appendix
\section*{Appendix}
\section{Proof of Claim \ref{cl:normaldsgd}}
\label{sec:proof_claim}
\begin{proof}
Consider $n$ scalar functions that for each $i \in \cV$, $f_i(x) = (1/2)(x - a_i)^2$ for some $a_i$, and complete graph with $\bW = (1/n)\one_n \one_n^\top$. Let $a_i = a, \forall i = 1, \ldots, n/2$, and $a_i = b, \forall i = n/2 + 1, \ldots, n$, and $b - a > 2B + 1$. Let $x_i^0 = a + 0.5, \forall i \in \cV$. Then, $\forall i \in \cV$, vanilla normalization reduces to
\begin{align*}
    x_i^1 & = \frac{1}{n}\sum_{r=1}^n \Big( x_r^0 - \alpha \text{sign}( x_r^0 - a_r) \Big) \\
    & = x_r^0 - \frac{\alpha}{n} \sum_{r=1}^n \text{sign}(x_r^0 - a_r)  \\
    & = x_r^0 - \frac{\alpha}{n} \sum_{r=1}^{n/2} \text{sign}(0.5) - \frac{\alpha}{n} \sum_{r=n/2 + 1}^n \text{sign}(0.5 - (b - a) ) \\
    & = x_r^0.
\end{align*}
Therefore, $x_r^t = a + 0.5, \forall r \in \cV, \forall t \ge 0$. Since the optimal solution to the original problem is $\frac{a + b}{2}$, the optimality gap is $\frac{b - a}{2} - 0.5 \ge B$.
\end{proof}

\begin{remark}
Note that the proof above can be further extended to the case where the gradient oracle admits almost surely bounded gradient noise. We can use the noise bound to adapt the choices of $a, b, \varepsilon$ such that all signs still get canceled. Similar examples have been used  to show divergence results in \citet{shulgin2025smoothed}. 
\end{remark} 
\section{Proofs of Theorems} 
\textbf{Proof structure}. \reb{The central recursion of our analysis leverages the descent lemma for $L$ smooth functions applied to consecutive network averages of $\{ \boldsymbol{x}_i^{t + 1} \}$ and $\{ \boldsymbol{x}_i^{t} \}$ (Lemma \ref{lm:dclm-main}). This recursion involves two coupled error sources: consensus errors between $\{\boldsymbol{x}_i^{t}\}$, $\{\boldsymbol{y}_i^{t}\}$, and gradient estimation errors. We establish the intricate coupling between these errors through a series of intermediate lemmas. Our proof strategy proceeds as follows: we first derive two key lemmas that bound consensus errors in terms of gradient estimation errors (Lemma \ref{lm:cns} and \ref{lm:cns-y}), then decompose gradient estimation errors into constituent components, including average gradient estimation errors (Lemma \ref{lm:avg-gd-est-err}) and stack gradient estimation errors (Lemma \ref{lm:stacked-gd-est-err}),  and bound each separately. With all error bounds established, we substitute these into the main recursion and optimize hyperparameter selection to achieve the final order-optimal convergence rates.}

\label{sec:analysis}
\subsection{Preliminaries}
We define some stacked long vectors,
\begin{align*}
    F(\x^t) & := [f_1(\x_1^t), \ldots, f_n(\x_n^t)]^\top, \\
    \nabla F(\x^t) & := [\nabla f_1(\x_1^t)^\top, \ldots, \nabla f_n (\x_n^t)^\top ]^\top, \\
    \g(\x^t, \bxi^{t}) & := [\g_1(\x_1^t, \bxi_1^{t})^\top, \ldots, \g_n(\x_n^t, \bxi_n^{t})^\top]^\top \\
    \bv^t & := [(\bv_1^t)^\top, \ldots, (\bv_n^t)^\top]^\top, \\
    \cN(\y^t) & :=  \Big[ \frac{(\y_1^{t})^\top }{\| \y_1^{t} \| }, \ldots, \frac{(\y_n^{t})^\top }{\| \y_n^{t} \| } \Big]^\top, \\ 
    \x^t & := [(\x_1^t)^\top, \ldots, (\x_n^t)^\top]^\top. 
\end{align*}
Then, Algorithm \ref{alg:gt-pmnsgd} can be rewritten in the compact long-vector form:
\begin{align}
\bv^{t} & = \beta  \bv^{t - 1} + (1 - \beta)\g(\x^{t}, \bxi^t); \label{eq:compact-update-v} \\
\y^{t} & = \big( \bW \otimes  \bI_d \big) ( \y^{t - 1} + \bv^t - \bv^{t -1}), \label{eq:compact-update-y}\\
\x^{t + 1} & = \big(  \bW \otimes \bI_d \big) (\x^{t} - \alpha \cN(\y^t)). \label{eq:compact-update}
\end{align}
We define the following averages over network: 
\begin{align}
\label{eq:def-avg}
    \ov^t = \frac{1}{n}\sum_{i=1}^n \bv_i^t, \quad \oy^t = \frac{1}{n} \sum_{i=1}^n \y_i^t, \quad \ty^t = \frac{1}{n}\sum_{i = 1}^n \frac{\y_i^{t}}{\| \y_i^t \|}, \quad \ox^{t} = \frac{1}{n} \sum_{i = 1}^n \x_i^t, \quad \onab F(\x^t) = \frac{1}{n}\sum_{i=1}^n \nabla f_i(\x_i^t). 
\end{align} 
From the doubly stochasticity of $\bW$, the global average updates as 
\begin{align}
\label{chap4-2:eq:avg-update}
\ox^{t + 1} = \ox^t - \frac{\alpha}{n} \sum_{r = 1}^n \frac{\y_r^{t}}{\| \y_r ^ { t} \|} = \ox^{t} - \alpha \ty^{t}. 
\end{align}

\subsection{Intermediate Lemmas}

We first present some standard useful relations to be used in our analysis.
\begin{lemma}
\label{lm:useful}
The following relations hold: 
\begin{enumerate}
    \item $\oy^{t} = \ov^{t} $;
    \item $\bW - \one_n \one_n^\top/n = (\bW - \one_n \one_n^\top/n)(\bI_n - \one_n \one_n^\top/n) = (\bI_n - \one_n \one_n^\top/n) (\bW - \one_n \one_n^\top/n)  $; 
    \item $\bW^k - \one_n \one_n^\top /n = (\bW - \one_n \one_n^\top /n )^k, \forall k \in \N_{+}$;
    \item $ (1/\sqrt{n})\sum_{i=1}^n \| \ba_i \| \le \| \ba \| \le \sum_{i=1}^n \| \ba_i \|$, $\forall \ba = [\ba_1^\top, \ldots, \ba_n^\top]^\top \in \R^{nd}$, 
    \item $ \sum_{i=1}^m a_i^p \le \big( \sum_{i=1}^m a_i\big)^p \le m^{p-1} \sum_{i=1}^m a_i^p, \forall m \in \N_{+}, \forall a_i \in \R_{+}$. 
\end{enumerate}
\end{lemma}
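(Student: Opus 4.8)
The plan is to verify the five relations one at a time. Items (2)--(5) are elementary algebraic facts — identities for the doubly stochastic matrix $\bW$, the equivalence of the stacked $\ell_2$ norm with the sum of block norms, and a standard power-sum inequality for nonnegative reals — while item (1) is an induction on $t$ using the compact recursions \eqref{eq:compact-update-v}--\eqref{eq:compact-update}. Nothing here is deep; the write-up is essentially bookkeeping.

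For item (1), I would induct on $t \ge -1$. The base case $t = -1$ holds because $\bv_i^{-1} = \y_i^{-1} = \zero_d$ by initialization, so $\oy^{-1} = \ov^{-1} = \zero_d$. For the inductive step, left-multiply \eqref{eq:compact-update-y} by the averaging operator $\tfrac1n(\one_n^\top \otimes \bI_d)$; since $\one_n^\top \bW = \one_n^\top$ by Assumption~\ref{as:network}, the mixing matrix cancels and we obtain $\oy^t = \oy^{t-1} + \ov^t - \ov^{t-1}$. Combined with the induction hypothesis $\oy^{t-1} = \ov^{t-1}$, this gives $\oy^t = \ov^t$.

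For items (2) and (3), set $\bJ := \one_n \one_n^\top / n$. From $\bW \one_n = \one_n$ and $\one_n^\top \bW = \one_n^\top$ one has $\bW \bJ = \bJ \bW = \bJ$, and clearly $\bJ^2 = \bJ$. Item (2) is then the one-line expansion $(\bW - \bJ)(\bI_n - \bJ) = \bW - \bW\bJ - \bJ + \bJ^2 = \bW - \bJ$, and symmetrically on the other side. Item (3) follows by induction on $k$: assuming $(\bW - \bJ)^k = \bW^k - \bJ$, we get $(\bW - \bJ)^{k+1} = (\bW^k - \bJ)(\bW - \bJ) = \bW^{k+1} - \bW^k \bJ - \bJ \bW + \bJ^2 = \bW^{k+1} - \bJ$, using $\bW^k \bJ = \bJ$. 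For item (4), write $\|\ba\|^2 = \sum_{i=1}^n \|\ba_i\|^2$: the upper bound is $\|\ba\|^2 \le (\sum_i \|\ba_i\|)^2$ (nonnegative cross terms), and the lower bound is Cauchy--Schwarz applied to $(\|\ba_1\|, \ldots, \|\ba_n\|)$ against $(1, \ldots, 1)$, giving $\sum_i \|\ba_i\| \le \sqrt{n}\,\|\ba\|$. For item (5), with $p \ge 1$ (the relevant regime here, $p \in (1,2]$) and $a_i \ge 0$: the left inequality uses $a_i^p = a_i \, a_i^{p-1} \le a_i \,(\sum_j a_j)^{p-1}$ summed over $i$, and the right inequality is Jensen's inequality for the convex map $t \mapsto t^p$, i.e., $(\tfrac1m \sum_i a_i)^p \le \tfrac1m \sum_i a_i^p$.

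There is no genuine obstacle in this lemma — each line is a textbook computation. The only places needing a little care are: making the $t=-1$ base case in item (1) consistent with the definitions in \eqref{eq:def-avg} and checking that the mixing matrix really cancels after left-multiplication by the averaging operator; and noting explicitly in item (5) that $p \ge 1$ is required (both inequalities can fail for $p < 1$), which is compatible with the standing assumption $p \in (1,2]$.
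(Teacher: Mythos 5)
Your proof is correct: the paper states Lemma~\ref{lm:useful} as a collection of standard facts without giving a proof, and your arguments (induction via double stochasticity for item~1, the $\bJ := \one_n\one_n^\top/n$ projection identities for items~2--3, Cauchy--Schwarz for item~4, and the monotonicity/Jensen pair for item~5) are exactly the standard derivations the authors implicitly rely on. Your remark that item~5 needs $p \ge 1$ is a fair observation and is consistent with the paper's standing assumption $p \in (1,2]$.
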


We then present a standard decent lemma for $L$-smooth functions . 
\begin{lemma}[Decent lemma for $L$-smooth functions] 
\label{lm:decent-lemma}
Let Assumption \ref{as:smooth} hold. For any $\x, \y \in \R^d$, there holds
\begin{align*}
    f(\y) \le f(\x) + \nabla f(\x)^\top(\y - \x) + \frac{L}{2}\| \x - \y \|^2. 
\end{align*}
\end{lemma}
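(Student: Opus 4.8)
The plan is to prove the standard descent lemma for an $L$-smooth function by reducing it to the fundamental theorem of calculus applied along the segment joining $\x$ and $\y$. Note that although Assumption~\ref{as:smooth} is stated for each local $f_i$, the average $f = (1/n)\sum_{i=1}^n f_i$ inherits $L$-smoothness by the triangle inequality, so $\|\nabla f(\x) - \nabla f(\y)\| \le L\|\x - \y\|$ for all $\x, \y \in \R^d$; I would either state this at the outset or simply prove the lemma for a generic $L$-smooth differentiable $g$ and apply it to $g = f$.

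First I would define $\phi(\tau) := f(\x + \tau(\y - \x))$ for $\tau \in [0,1]$. Since $f$ is differentiable, $\phi$ is differentiable with $\phi'(\tau) = \nabla f(\x + \tau(\y - \x))^\top(\y - \x)$, and the fundamental theorem of calculus gives $f(\y) - f(\x) = \phi(1) - \phi(0) = \int_0^1 \phi'(\tau)\, d\tau$. Next I would subtract the ``linear part'' $\nabla f(\x)^\top(\y - \x) = \int_0^1 \nabla f(\x)^\top(\y-\x)\,d\tau$ from both sides, obtaining
\begin{align*}
f(\y) - f(\x) - \nabla f(\x)^\top(\y - \x) = \int_0^1 \big(\nabla f(\x + \tau(\y-\x)) - \nabla f(\x)\big)^\top(\y - \x)\, d\tau.
\end{align*}
Then I would bound the right-hand side using Cauchy--Schwarz inside the integral, followed by the $L$-smoothness bound $\|\nabla f(\x + \tau(\y-\x)) - \nabla f(\x)\| \le L\tau\|\y - \x\|$, yielding an integrand bounded by $L\tau\|\y-\x\|^2$; integrating $\int_0^1 L\tau\|\y-\x\|^2\,d\tau = \tfrac{L}{2}\|\y-\x\|^2$ finishes the estimate. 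Rearranging gives exactly $f(\y) \le f(\x) + \nabla f(\x)^\top(\y-\x) + \tfrac{L}{2}\|\x - \y\|^2$.

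This is an entirely routine textbook argument and there is no real obstacle; the only point requiring a word of care is the passage from the per-node smoothness in Assumption~\ref{as:smooth} to smoothness of the average $f$ (handled by averaging the Lipschitz-gradient inequalities), and the mild regularity check that $\tau \mapsto \nabla f(\x + \tau(\y-\x))$ is continuous (hence the integral is well-defined), which follows since $L$-smoothness implies the gradient is continuous. I would keep the write-up to a few lines.
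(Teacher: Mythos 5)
Your proof is correct; the paper states this lemma as a standard result without proof, and your argument (Taylor expansion via the fundamental theorem of calculus along the segment, Cauchy--Schwarz, then the Lipschitz-gradient bound, noting that $f$ inherits $L$-smoothness from the $f_i$'s by averaging) is exactly the standard argument it implicitly relies on.
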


We next present the main descent lemma on the network average. 
\begin{lemma}[Decent lemma for network average] 
\label{lm:dclm-main}
Let Assumption \ref{as:smooth} hold. Let $\beps^t = \oy^{t} - \nabla f(\ox^t)$. We have
\begin{align*}
    \sum_{t=0}^{T - 1} \alpha \| \nabla f(\ox^t) \| \le f(\ox^0) - f_* + \sum_{t =0}^{T - 1} 2\alpha\| \beps^t \| + \sum_{t =0}^{T - 1} \frac{\alpha}{n} \sum_{i = 1}^n \| \oy^{t} - \y_i^{t} \|  + \sum_{t =0}^{T - 1} \frac{L}{2} \alpha^2. 
\end{align*}
\end{lemma}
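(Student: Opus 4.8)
\textbf{Proof plan for Lemma~\ref{lm:dclm-main}.} The plan is to apply the descent lemma for $L$-smooth functions (Lemma~\ref{lm:decent-lemma}) to the global average iterates $\ox^{t+1}$ and $\ox^t$, using the averaged recursion \eqref{chap4-2:eq:avg-update}, namely $\ox^{t+1} = \ox^t - \alpha \ty^t$. This gives
\begin{align*}
    f(\ox^{t+1}) \le f(\ox^t) - \alpha \nabla f(\ox^t)^\top \ty^t + \frac{L \alpha^2}{2} \| \ty^t \|^2.
\end{align*}
The first task is to control $\| \ty^t \|^2$: since $\ty^t = (1/n)\sum_i \y_i^t/\|\y_i^t\|$ is an average of unit vectors, the triangle inequality (Lemma~\ref{lm:useful}, item 4 applied to the averaged form) yields $\|\ty^t\| \le 1$, so the last term is bounded by $L\alpha^2/2$.

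The crux is the cross term $-\alpha \nabla f(\ox^t)^\top \ty^t$, which we want to turn into something like $-\alpha \|\nabla f(\ox^t)\|$ plus controllable errors. First I would add and subtract the normalized \emph{average} tracker direction. Write $\ty^t = \frac{1}{n}\sum_i \frac{\y_i^t}{\|\y_i^t\|}$ and compare it against $\frac{\oy^t}{\|\oy^t\|}$ (or directly work with the decomposition $-\nabla f(\ox^t)^\top \ty^t = -\nabla f(\ox^t)^\top \frac{\oy^t}{\|\oy^t\|} - \nabla f(\ox^t)^\top(\ty^t - \frac{\oy^t}{\|\oy^t\|})$). For the first piece, use the standard fact that for any vectors $\bu, \bw$ with $\bw \neq \zero$, $-\bu^\top \frac{\bw}{\|\bw\|} \le -\|\bu\| + 2\|\bu - \bw\|$ — this is the key ``normalized direction is almost aligned with $\bu$ when $\bw \approx \bu$'' inequality; applied with $\bu = \nabla f(\ox^t)$ and $\bw = \oy^t$, and recalling $\beps^t = \oy^t - \nabla f(\ox^t)$, it produces $-\|\nabla f(\ox^t)\| + 2\|\beps^t\|$. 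For the second piece, bound $|\nabla f(\ox^t)^\top(\ty^t - \frac{\oy^t}{\|\oy^t\|})| \le \|\ty^t - \frac{\oy^t}{\|\oy^t\|}\|$ if one has $\|\nabla f(\ox^t)\|\le 1$, but more robustly one should relate $\|\ty^t - \frac{\oy^t}{\|\oy^t\|}\|$ to the consensus gap $\frac{1}{n}\sum_i \|\oy^t - \y_i^t\|$ via a Lipschitz-type bound on the map $\bz \mapsto \bz/\|\bz\|$ restricted to the relevant vectors. Since $\ty^t$ is the average of $\y_i^t/\|\y_i^t\|$, we directly get $\|\ty^t - \oy^t/\|\oy^t\|\| \le \frac{1}{n}\sum_i \|\y_i^t/\|\y_i^t\| - \oy^t/\|\oy^t\|\| \le \frac{2}{n}\sum_i \|\y_i^t - \oy^t\|/\|\oy^t\|$; this is where a division by $\|\oy^t\|$ appears, which is the main obstacle — one needs either to avoid it or to note that the statement as written keeps the term $\frac{\alpha}{n}\sum_i\|\oy^t - \y_i^t\|$ \emph{without} such a denominator. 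I would therefore instead bound the cross term more carefully: use $-\nabla f(\ox^t)^\top \ty^t = -\frac{1}{n}\sum_i \nabla f(\ox^t)^\top \frac{\y_i^t}{\|\y_i^t\|}$ and apply the ``almost-alignment'' inequality termwise with $\bu = \nabla f(\ox^t)$, $\bw = \y_i^t$, giving $-\|\nabla f(\ox^t)\| + \frac{2}{n}\sum_i \|\nabla f(\ox^t) - \y_i^t\|$, then split $\nabla f(\ox^t) - \y_i^t = (\nabla f(\ox^t) - \oy^t) + (\oy^t - \y_i^t) = -\beps^t + (\oy^t - \y_i^t)$ and apply the triangle inequality. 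This cleanly yields $-\alpha\|\nabla f(\ox^t)\| + 2\alpha\|\beps^t\| + \frac{2\alpha}{n}\sum_i\|\oy^t-\y_i^t\|$, with no problematic denominators.

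Putting this together: $f(\ox^{t+1}) \le f(\ox^t) - \alpha\|\nabla f(\ox^t)\| + 2\alpha\|\beps^t\| + \frac{\alpha}{n}\sum_i \|\oy^t - \y_i^t\| + \frac{L\alpha^2}{2}$ (absorbing the constant $2$ on the consensus term into the claimed looser constant, or carrying it — the stated lemma has coefficient $1$, so one must check the termwise alignment inequality can be sharpened, or that the averaging of unit vectors gives the factor of $1$; I would verify the precise constant here). Rearranging to $\alpha\|\nabla f(\ox^t)\| \le f(\ox^t) - f(\ox^{t+1}) + 2\alpha\|\beps^t\| + \frac{\alpha}{n}\sum_i\|\oy^t-\y_i^t\| + \frac{L\alpha^2}{2}$ and summing over $t = 0,\dots,T-1$, the left side telescopes on the function-value differences into $f(\ox^0) - f(\ox^T) \le f(\ox^0) - f_*$ by Assumption~\ref{as:ldd}, yielding exactly the claimed bound. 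The main obstacle is the bookkeeping of constants in the cross-term estimate (ensuring the consensus term comes out with coefficient $\alpha$, not $2\alpha$) and making sure all the normalized-direction manipulations avoid introducing $1/\|\oy^t\|$ or $1/\|\y_i^t\|$ factors that cannot be controlled a priori; the termwise alignment inequality applied before averaging is what sidesteps this.
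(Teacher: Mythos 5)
Your overall architecture is the same as the paper's: apply the $L$-smoothness descent lemma to $\ox^{t+1}=\ox^t-\alpha\ty^t$, bound $\|\ty^t\|\le 1$, convert the cross term into $-\alpha\|\nabla f(\ox^t)\|$ plus an $\|\beps^t\|$ error and a $\y$-consensus error, then telescope with $f(\ox^T)\ge f_*$. However, the route you commit to (the termwise alignment inequality $-\bu^\top\tfrac{\bw}{\|\bw\|}\le-\|\bu\|+2\|\bu-\bw\|$ with $\bu=\nabla f(\ox^t)$, $\bw=\y_i^t$, averaged over $i$) yields $-\|\nabla f(\ox^t)\|+2\|\beps^t\|+\tfrac{2}{n}\sum_i\|\oy^t-\y_i^t\|$, i.e.\ the consensus term comes with coefficient $2\alpha$, not the stated $\alpha$. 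You flag this yourself, but the resolution you suggest (sharpening the termwise inequality) is not available: the constant $2$ there is tight, as seen by taking $\bw$ nearly antiparallel to $\bu$ with $\|\bw\|$ small, so no termwise argument of this form can recover coefficient $1$. Thus, as written, your argument proves only a weakened version of the lemma; this would be harmless downstream (the factor $2$ is absorbed into the $O(\cdot)$ in the theorems), but it does not establish the statement as claimed.

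The paper gets coefficient $1$ by a different handling of the cross term, which also resolves exactly the denominator obstacle you identified and then sidestepped. Writing $\nabla f(\ox^t)=\oy^t-\beps^t$, it bounds $\alpha(\beps^t)^\top\ty^t\le\alpha\|\beps^t\|$ by Cauchy--Schwarz and $\|\ty^t\|\le1$, and then treats $-(\oy^t)^\top\ty^t$ via the decomposition
\begin{align*}
\ty^t \;=\; \frac{\oy^t}{\|\oy^t\|}\;+\;\frac{1}{n}\sum_{i=1}^n \y_i^t\Big(\frac{1}{\|\y_i^t\|}-\frac{1}{\|\oy^t\|}\Big),
\end{align*}
so that $-(\oy^t)^\top\ty^t\le-\|\oy^t\|+\big\|\tfrac{1}{n}\sum_i\y_i^t\big(\tfrac{\|\oy^t\|}{\|\y_i^t\|}-1\big)\big\|$. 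The key point is that the Cauchy--Schwarz pairing with $\oy^t$ (rather than with $\nabla f(\ox^t)$) makes the $1/\|\y_i^t\|$ and $1/\|\oy^t\|$ factors cancel exactly: $\|\y_i^t\|\cdot\big|\tfrac{\|\oy^t\|}{\|\y_i^t\|}-1\big|=\big|\|\oy^t\|-\|\y_i^t\|\big|\le\|\oy^t-\y_i^t\|$, giving the consensus term with coefficient exactly $1$, while $-\|\oy^t\|\le-\|\nabla f(\ox^t)\|+\|\beps^t\|$ supplies the second $\|\beps^t\|$ (hence the $2\alpha\|\beps^t\|$, which you match). If you want your writeup to prove the lemma verbatim, replace the termwise alignment step with this decomposition; otherwise state and carry the coefficient $2$ explicitly.
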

\begin{proof}
Since $\|\ox^{t + 1} - \ox^t\| = \alpha  \| \ty^t \| = \alpha$, 
applying Lemma \ref{lm:decent-lemma} on $\ox^{t + 1}, \ox^t$ gives that
\begin{align} 
    f(\ox^{t + 1}) 
    & \le f(\ox^t) + \nabla f(\ox^t)^\top (\ox^{t + 1} - \ox^t)+ \frac{L}{2}\| \ox^{t + 1} - \ox^t \|^2 
    \notag \\
    &  \overset{(i)}{\le} f(\ox^t) - \alpha (\oy^{t} - \beps^t)^\top \ty^{t} + \frac{L}{2} \alpha^2 \notag  \\
    & \overset{(ii)}{\le} f(\ox^t) - \alpha (\oy^{t})^\top \ty^{t} + \alpha \| \beps^t \| + \frac{L}{2} \alpha^2, \label{eq:avg_dec_1}
\end{align} 
where we used the definitions \eqref{eq:def-avg}\eqref{chap4-2:eq:avg-update} in $(i)$, and used Cauchy-Schwartz inequality followed by $\|\ty^{t}\| \le 1$ in $(ii).$ Next,
\begin{align}
- (\oy^{t})^\top \ty^{t } 
& = - (\oy^{t})^\top \Big[ \frac{\oy^{t}}{\| \oy^{t} \|} + \frac{1}{n}\sum_{i=1}^n \y_i^{t}\big( \frac{1}{\| \y_i^{t} \|} - \frac{1}{\| \oy^{t} \|} \big)  \Big] \notag \\
& \le - \| \oy^{t} \| + \| \frac{1}{n}\sum_{i=1}^n \y_i^{t} \big( \frac{\| \oy^{t}\|}{\| \y_i^{t} \|} - 1\big) \| \notag  \\
& \overset{(i)}{\le} - \|\nabla f(\ox^t) \| + \| \beps^{ t} \| + \frac{1}{n} \sum_{i=1}^n \big| \| \oy^{t} \| - \| \y_i^{t} \| \big| \notag \\
& \overset{(ii)}{\le}  - \|\nabla f(\ox^t) \| + \| \beps^{ t} \| + \frac{1}{n} \sum_{i=1}^n \| \oy^{t} - \y_i^{t} \|, \label{eq:yy}
\end{align} 
where we used $\| \oy^{t} \| = \| \nabla f(\ox^t) + \beps^t \| \ge \| \nabla f(\ox^t) \| - \| \beps^t \| $, and Cauchy-Schwartz inequality in $(i)$, and $| \| \ba \|  - \| \bb \| | \le \| \ba - \bb \|$ for any $\ba, \bb \in \R^d$ in $(ii)$. Plugging in \eqref{eq:yy} into \eqref{eq:avg_dec_1}, and summing over $t = 0, \ldots, T - 1$, we have
\begin{align*}
f(\ox^{T}) \le f(\ox^0) - \sum_{t =0}^{T - 1} \alpha \| \nabla f(\ox^t) \| + \sum_{t =0}^{T - 1} 2\alpha\| \beps^t \| + \sum_{t =0}^{T - 1} \frac{\alpha}{n} \sum_{i = 1}^n \| \oy^{t} - \y_i^{t} \|  + \sum_{t =0}^{T - 1} \frac{L}{2} \alpha^2.
\end{align*}
Using $f(\ox^T) \ge f_*$ and rearranging terms above give the desired result.
\end{proof} 

With Lemma \ref{lm:dclm-main}, it remains to bound the gradient estimation error $\| \beps^t \|$ and the consensus error $\y_i^t - \oy^t$. Let us decompose the gradient estimation error as follows: 
\begin{align}
\label{eq:bpes_decom}
    \beps^t = \oy^{t} - \nabla f(\ox^t) = \ov^{t} - \nabla f(\ox^t) = \underbrace{\ov^t - \onab F(\x^t)}_{:= \beps_1^t \in \R^d } + \underbrace{\onab F(\x^t) - \nabla f(\ox^t)}_{:= \beps_2^t \in \R^d}.
\end{align} 
It is clear that $\beps_1^t$ is the gradient estimation error, and $\beps_2^t$, exploiting the smoothness property in \ref{as:smooth}, can be bounded by the consensus error $\x_i^t - \ox^t$. Since the consensus error is also used in bounding $\beps_1^t$, we need to first bound the consensus errors $\x_i^t - \ox^t$ and $ \y_i^t - \oy^t$.

\begin{lemma}[Consensus errors of $\{\x_i^t\}$]
\label{lm:cns}
We have for all $t = 0, \ldots, T$, 
\begin{align}
    \frac{1}{n} \sum_{i = 1}^n \| \x_i^t - \ox^t \|  & \le \frac{\alpha \lambda}{1 - \lambda}. \label{eq:cns-err} 
\end{align} 
\end{lemma}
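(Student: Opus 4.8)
\textbf{Proof plan for Lemma \ref{lm:cns}.} The plan is to derive a recursion for the consensus error vector $\x^t - \one_n \otimes \ox^t$ directly from the compact update \eqref{eq:compact-update} and then unroll it, exploiting the crucial fact that the normalized direction $\cN(\y^t)$ has each block of unit norm, so $\| \cN(\y^t) \| = \sqrt{n}$ deterministically. First I would subtract the average update \eqref{chap4-2:eq:avg-update} from \eqref{eq:compact-update}: writing $\bJ = \one_n \one_n^\top / n$, we have $\x^{t+1} - (\bJ \otimes \bI_d)\x^{t+1} = ((\bW - \bJ) \otimes \bI_d)(\x^t - \alpha \cN(\y^t))$, and since $(\bW - \bJ)\one_n = \zero_n$ this equals $((\bW - \bJ)\otimes \bI_d)(\x^t - \one_n \otimes \ox^t) - \alpha((\bW-\bJ)\otimes\bI_d)\cN(\y^t)$. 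Using Assumption \ref{as:network} (so $\|(\bW - \bJ)\otimes \bI_d\|_2 = \lambda < 1$) together with $\|\cN(\y^t)\| = \sqrt n$ and $\|\x^0 - \one_n\otimes\ox^0\| = 0$, one gets the scalar recursion $\|\x^{t+1} - \one_n\otimes\ox^{t+1}\| \le \lambda \|\x^t - \one_n\otimes\ox^t\| + \alpha\lambda\sqrt n$.

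Next I would unroll this geometric recursion from the zero initial condition to obtain $\|\x^t - \one_n\otimes\ox^t\| \le \alpha\lambda\sqrt n \sum_{k=0}^{t-1}\lambda^k \le \frac{\alpha\lambda\sqrt n}{1-\lambda}$ for all $t$. Finally, converting from the stacked $\ell_2$ norm back to the per-node average via part 4 of Lemma \ref{lm:useful}, namely $\frac{1}{n}\sum_{i=1}^n\|\x_i^t - \ox^t\| \le \frac{1}{\sqrt n}\|\x^t - \one_n\otimes\ox^t\|$, yields $\frac{1}{n}\sum_{i=1}^n\|\x_i^t - \ox^t\| \le \frac{\alpha\lambda}{1-\lambda}$, which is exactly \eqref{eq:cns-err}.

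The proof is essentially routine; the only subtlety — and the one step worth stating carefully — is that the bound holds \emph{deterministically} (no expectation), and this is possible \emph{only} because normalization makes $\|\y_r^t/\|\y_r^t\|\| = 1$ regardless of the magnitude or tail behavior of $\y_r^t$. This is precisely where the normalization operator pays off relative to an unnormalized scheme: the heavy-tailed noise, which could blow up $\|\bv_i^t\|$ and hence $\|\y_i^t\|$, never enters the consensus bound for $\{\x_i^t\}$. I would also note in passing the edge case where some $\y_r^t = \zero_d$ so the normalization is undefined; this is handled by the standing convention (implicit in the algorithm description, and which I would make explicit) that $\zero/\|\zero\|$ is taken to be $\zero$, which only improves the bound. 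One mild caveat: care must be taken that the recursion uses $\|\x^0 - \one_n\otimes\ox^0\| = 0$, which follows from the common initialization $\x_i^0 = \ox^0$ assumed in Algorithm \ref{alg:gt-pmnsgd}; without common initialization an extra $\lambda^t\|\x^0 - \one_n\otimes\ox^0\|$ term would appear, but it decays geometrically and does not affect the stated uniform bound after absorbing it.
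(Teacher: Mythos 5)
Your proposal is correct and follows essentially the same route as the paper: the paper unrolls the compact update \eqref{eq:compact-update} directly and applies the projector $\bI_{nd} - \tfrac{1}{n}\one_n\one_n^\top\otimes\bI_d$ with $\|\bW^{t-k}-\one_n\one_n^\top/n\|_2\le\lambda^{t-k}$ and $\|\cN(\y^k)\|\le\sqrt{n}$, which is the same geometric-series computation as your one-step recursion $\|\x^{t+1}-\one_n\otimes\ox^{t+1}\|\le\lambda\|\x^t-\one_n\otimes\ox^t\|+\alpha\lambda\sqrt{n}$ unrolled from the common initialization, finishing with part 4 of Lemma \ref{lm:useful} exactly as in the paper. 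Your added remarks (determinism of the bound thanks to normalization, and the $\y_r^t=\zero_d$ convention) are sensible but not needed for the paper's argument.
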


\begin{proof}
Using the relation 4 in Lemma \ref{lm:useful} we have
\begin{align}
\label{eq:cns-err-comp-long}
    \frac{1}{n} \sum_{i=1}^n \| \x_i^t - \ox^t \| \le \frac{1}{\sqrt{n}} \| \x^t - \one_n \otimes \ox^t \|. 
\end{align}
From the compact form update in \eqref{eq:compact-update}, we have
\begin{align*}
    \x^t = \big( \bW \otimes \bI_d \big) \x^0 - \alpha \sum_{k = 0}^{t - 1} (\bW \otimes \bI_d)^{t - k}\cN(\y^k). 
\end{align*}
It follows that
\begin{align}
    & \| \x^t - \one_n \otimes \ox^t \| \notag \\
    = \ & \| \big( \bI_{nd} - \frac{1}{n}\one_n\one_n^\top \otimes \bI_d\big) \x^t \| \notag  \\
    \overset{(i)}{=} \ &  \alpha \| \sum_{k=0}^{t - 1} \big( \bI_{nd} - \frac{1}{n}\one_n\one_n^\top \otimes \bI_d  \big) \big( \bW \otimes \bI_d \big)^{t - k} \cN(\y^k)\| \notag \\
    \le  \ & \alpha \| \sum_{k = 0}^{t - 1}  \| \bW^{t - k} - \frac{1}{n}\one_n \one_n^\top \|_2 \| \cN(\y^k) \| \notag \\
    \overset{(ii)}{\le} \ &  \alpha \| \sum_{k = 0}^{t - 1} \| \bW - \frac{1}{n}\one_n \one_n^\top \|_2^{t - k} \| \cN(\y^{k}) \| \notag \\
    \le \ & \alpha \sqrt{n} \sum_{k = 0}^{ t- 1} \lambda^{t - k} \\ 
    \overset{(iii)}{\le} \ &  \frac{\alpha\sqrt{n} \lambda}{1 - \lambda}. \label{eq:cns-err-long}
\end{align}
where we used the double stochasticity of $\bW$ and $\x_i^0 = \ox^0, \forall i \in [n]$ in $(i)$, the relation 3 in Lemma \ref{lm:useful} in $(ii)$, and Assumption \ref{as:network} in $(iii)$. Substituting \eqref{eq:cns-err-long} into \eqref{eq:cns-err-comp-long} gives the desired bound in \eqref{eq:cns-err}. 
\end{proof}

Before proceeding to bound consensus errors for $\{\y_i^t\}$, we present the following bound on vector-valued martingale difference sequence from \citet{liu2025nonconvex}.  
\begin{lemma}
\label{lm:mds}
    Given a sequence of random vectors $\bd_t \in \R^d$, $\forall t$ such that $\E[\bd_t \mid \cF_{t - 1} ] = \zero$ where $\cF_{t} = \sigma(\bd_1, \ldots, \bd_t)$ is the natural filtration, then for any $p \in [1, 2]$, there is 
    \begin{align*}
        \E \Big[ \| \sum_{t = 1}^T \bd_t \| \Big] \le 2 \sqrt{2} \E \Big[ \Big( \sum_{t = 1}^T  \| \bd_t \|^p \Big)^{\frac{1}{p}} \Big], \forall T \ge 0.
    \end{align*}
\end{lemma}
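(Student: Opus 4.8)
The plan is to reduce the general $p\in(1,2]$ to the second–moment case $p=2$, and then to establish the $p=2$ case as the ``easy half'' of the Burkholder--Davis--Gundy inequality for Hilbert–space–valued martingales. We may assume the right-hand side is finite (otherwise there is nothing to prove); in particular each $\E\norm{\bd_t}<\infty$. For the reduction, note that for nonnegative reals $a_1,\dots,a_T$ and $1\le p\le 2$, monotonicity of $\ell^q$-norms gives $\big(\sum_t a_t^2\big)^{1/2}\le\big(\sum_t a_t^{p}\big)^{1/p}$; taking $a_t=\norm{\bd_t}$ shows it suffices to prove $\E\big[\norm{\textstyle\sum_{t=1}^T\bd_t}\big]\le 2\sqrt 2\,\E\big[\big(\sum_{t=1}^T\norm{\bd_t}^2\big)^{1/2}\big]$. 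Write $S_T:=\sum_{t=1}^T\bd_t$ and $A_t:=\big(\sum_{s\le t}\norm{\bd_s}^2\big)^{1/2}$, so that $A_{t-1}$ is $\cF_{t-1}$-measurable and $A_t$ is nondecreasing in $t$.

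For the $p=2$ bound I would use a Davis-type truncation at a scale tied to the running square function. Fix a constant $\theta>0$ and split, for each $t$, $\bd_t=\bd_t\mathbf 1(\norm{\bd_t}\le\theta A_{t-1})+\bd_t\mathbf 1(\norm{\bd_t}>\theta A_{t-1})$; since $A_{t-1}$ is predictable, both indicators are $\cF_{t-1}$-measurable. This yields $S_T=N_T+C_T+D_T$, where $D_T:=\sum_t\bd_t\mathbf 1(\norm{\bd_t}>\theta A_{t-1})$ collects the large increments, $C_T:=\sum_t\E\big[\bd_t\mathbf 1(\norm{\bd_t}>\theta A_{t-1})\mid\cF_{t-1}\big]$ is the corresponding compensator (using $\E[\bd_t\mid\cF_{t-1}]=0$), and $N_T:=\sum_t\big(\bd_t\mathbf 1(\norm{\bd_t}\le\theta A_{t-1})-\E[\bd_t\mathbf 1(\norm{\bd_t}\le\theta A_{t-1})\mid\cF_{t-1}]\big)$ is again a martingale-difference sum. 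For $D_T$ and $C_T$: whenever $\norm{\bd_t}>\theta A_{t-1}$ one has $A_t^2=A_{t-1}^2+\norm{\bd_t}^2>(1+\theta^2)A_{t-1}^2$, so $A$ grows geometrically along the large-increment times; together with $\norm{\bd_t}\le A_t\le A_T$, summing the geometric series gives the \emph{pathwise} bound $\sum_t\norm{\bd_t}\mathbf 1(\norm{\bd_t}>\theta A_{t-1})\le c(\theta)\,A_T$ with $c(\theta)=\big(1-(1+\theta^2)^{-1/2}\big)^{-1}$, whence $\E\norm{D_T}\le c(\theta)\,\E[A_T]$ and, by Jensen, $\E\norm{C_T}\le c(\theta)\,\E[A_T]$. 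For $N_T$: its accumulated conditional second moment $\sum_t\E[\norm{\bd_t}^2\mathbf 1(\norm{\bd_t}\le\theta A_{t-1})\mid\cF_{t-1}]$ is dominated by $A_T^2$ (a sub-sum of $\sum_t\norm{\bd_t}^2$), so stopping $N$ the first time this quantity exceeds a threshold $\mu^2$, applying martingale-increment orthogonality / Doob's $L^2$ inequality to the stopped-and-truncated martingale, and integrating the resulting weak-type estimate over $\mu$ (layer cake, using $\int_0^\infty\P(A_T>\mu)\,d\mu=\E[A_T]$) gives $\E\norm{N_T}\le c'(\theta)\,\E[A_T]$. Adding the three bounds and choosing $\theta$ to balance the constants yields $\E\norm{S_T}\le 2\sqrt2\,\E[A_T]$, which is the claim; this is exactly the martingale inequality established in \citet{liu2025nonconvex}, so one may alternatively just cite it.

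The main obstacle is precisely the heavy tail: since the individual increments $\bd_t$ need not be square-integrable, one cannot apply Doob's inequality, martingale orthogonality, or ordinary BDG to $S_T$ directly, and $\E[A_T^2]$ may be infinite even when $\E[A_T]<\infty$. The Davis-type decomposition, with the jump cap set relative to the running square function $A_{t-1}$ (and the additional stopping of the small-jump martingale once its variance is large), is what circumvents this; the remaining work---the geometric-series estimate for the large-jump terms and the stopped-$L^2$ estimate for $N_T$---is routine, and the only truly delicate point is tracking the universal constants carefully enough to land on the explicit value $2\sqrt2$.
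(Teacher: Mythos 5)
A preliminary point of reference: the paper never proves this lemma --- it is imported verbatim from \citet{liu2025nonconvex}, and your closing remark that one may simply cite that reference is exactly what the paper does. Taken as an actual proof attempt, your architecture (reduce $p\in[1,2]$ to $p=2$ via monotonicity of $\ell^q$-norms, then prove the $L^1$ square-function bound by a Davis-type split of each increment at the predictable threshold $\theta A_{t-1}$) is a legitimate route to an inequality of this shape, and your large-jump bookkeeping is sound: the geometric-series estimate $\sum_t\|\bd_t\|\mathbf{1}(\|\bd_t\|>\theta A_{t-1})\le\big(1-(1+\theta^2)^{-1/2}\big)^{-1}A_T$ is correct, as is the Jensen/tower bound on the compensator.

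Two steps, however, do not hold as written. (i) The assertion that the accumulated conditional second moment $\sum_t\E\big[\|\bd_t\|^2\mathbf{1}(\|\bd_t\|\le\theta A_{t-1})\mid\cF_{t-1}\big]$ ``is dominated by $A_T^2$'' is false: a sum of conditional expectations is not pathwise dominated by the corresponding realized sub-sum of $\sum_t\|\bd_t\|^2$, and your layer-cake step (converting the weak-type estimate into $\E[A_T]$ via $\int_0^\infty\P(A_T>\mu)\,d\mu$) leans precisely on that false domination. The standard repair is to stop at $\sigma_\mu=\min\{t:A_t>\mu\}$, noting $\{t\le\sigma_\mu\}=\{A_{t-1}\le\mu\}\in\cF_{t-1}$, and to use increment orthogonality together with the tower property so that the conditional variances are replaced, inside an expectation, by the realized truncated squares, which are pathwise at most $\min\big(A_T^2,(1+\theta^2)\mu^2\big)$; coupling $\mu$ to the Chebyshev level and integrating then closes this part --- but that argument still has to be made, and as stated the step fails. (ii) More consequentially, the constant. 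Carrying out your plan carefully yields $\E\|S_T\|\le\big[\,2\sqrt2\,(1+\theta^2)^{1/4}+2\big(1-(1+\theta^2)^{-1/2}\big)^{-1}\big]\E[A_T]$ (or $1+2\sqrt{1+\theta^2}+2\big(1-(1+\theta^2)^{-1/2}\big)^{-1}$ without optimizing the coupling), whose minimum over $\theta$ is roughly $8$--$9$, nowhere near $2\sqrt2\approx2.83$; ``choosing $\theta$ to balance the constants'' cannot land on the stated value. The explicit $2\sqrt2$ --- which the paper carries verbatim into Lemmas \ref{lm:cns-y}, \ref{lm:avg-gd-est-err}, and \ref{lm:stacked-gd-est-err} --- requires the sharper argument of \citet{liu2025nonconvex}, not a generic Davis decomposition; your sketch, once repaired, proves the lemma only with a larger universal constant (which would still leave the paper's $O(\cdot)$ rates intact, but is not the statement as given).
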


\begin{lemma}[Consensus errors for $\{\y_i^t\}$]
\label{lm:cns-y}
We have for all $t = 0, \ldots, T$, 
\begin{align*}
     & \frac{1}{n}  \E \big[ \sum_{i=1}^n \|\y_i^t - \oy^t\| \big] \\
     \le \ &  2\sqrt{2}  n^{\frac{1}{2}} \big(\frac{1}{\beta}  - 1 \big)  \Big( \sum_{k = 0}^t  \lambda^{(t - k + 1)p}  \Big)^{\frac{1}{p}} \sigma  + \frac{1}{\sqrt{n}}\big( \frac{1}{\beta} - 1 \big) \sum_{k = 0}^t \lambda^{t -k + 1} \E \big[ \| \nabla F(\x^k) - \bv^k \|\big]. 
\end{align*}
\end{lemma}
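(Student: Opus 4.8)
The plan is to unroll the gradient-tracking recursion \eqref{eq:compact-update-y} into an explicit sum over past increments, project onto the consensus-error subspace using the properties in Lemma \ref{lm:useful}, and then control the resulting sum in expectation by splitting each increment into a martingale-difference (noise) part and a deterministic-conditional (gradient) part, applying Lemma \ref{lm:mds} to the former. First, I would recall the standard fact that gradient tracking preserves the running sum: iterating \eqref{eq:compact-update-y} with $\y^{-1}=\bv^{-1}=\zero$ and doubly stochastic $\bW$ gives $\oy^t=\ov^t$ (relation 1 of Lemma \ref{lm:useful}) and, more importantly, $\y^t = \sum_{k=0}^{t}(\bW\otimes\bI_d)^{t-k+1}(\bv^k-\bv^{k-1})$. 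Applying the consensus projection $\bI_{nd}-\tfrac1n\one_n\one_n^\top\otimes\bI_d$ and using relations 2--3 of Lemma \ref{lm:useful} to convert powers of $\bW$ on the orthogonal complement into powers of $\bW-\tfrac1n\one_n\one_n^\top$, I get $\y^t-\one_n\otimes\oy^t = \sum_{k=0}^{t}\big((\bW-\tfrac1n\one_n\one_n^\top)^{t-k+1}\otimes\bI_d\big)(\bI_{nd}-\tfrac1n\one_n\one_n^\top\otimes\bI_d)(\bv^k-\bv^{k-1})$, so that by Assumption \ref{as:network} and submultiplicativity, $\|\y^t-\one_n\otimes\oy^t\|\le \sum_{k=0}^t \lambda^{t-k+1}\|\bv^k-\bv^{k-1}\|$. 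Then relation 4 of Lemma \ref{lm:useful} converts this into the desired per-node average via $\tfrac1n\sum_i\|\y_i^t-\oy^t\|\le \tfrac1{\sqrt n}\|\y^t-\one_n\otimes\oy^t\|\le \tfrac{1}{\sqrt n}\sum_{k=0}^t\lambda^{t-k+1}\|\bv^k-\bv^{k-1}\|$.

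Next I would expand the momentum increment. From \eqref{eq:compact-update-v}, $\bv^k-\bv^{k-1} = (\beta-1)\bv^{k-1}+(1-\beta)\g(\x^k,\bxi^k) = (1-\beta)\big(\g(\x^k,\bxi^k)-\bv^{k-1}\big)$. Write $\g(\x^k,\bxi^k)-\bv^{k-1} = \underbrace{\big(\g(\x^k,\bxi^k)-\nabla F(\x^k)\big)}_{\text{noise }\boldsymbol{n}^k} + \underbrace{\big(\nabla F(\x^k)-\bv^{k-1}\big)}_{\text{deterministic-ish}}$. Wait — I need to be careful: in this lemma the author wrote $\frac1\beta-1$, not $1-\beta$; this suggests instead re-indexing so that the coefficient attaches differently. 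Let me instead substitute $\bv^{k-1} = \frac{1}{\beta}\bv^k - \frac{1-\beta}{\beta}\g(\x^k,\bxi^k)$, giving $\bv^k-\bv^{k-1} = \bv^k-\frac1\beta\bv^k+\frac{1-\beta}{\beta}\g(\x^k,\bxi^k) = \frac{1-\beta}{\beta}\big(\g(\x^k,\bxi^k)-\bv^k\big) = (\tfrac1\beta-1)\big(\g(\x^k,\bxi^k)-\bv^k\big)$, which matches the stated factor $\tfrac1\beta-1$. Now decompose $\g(\x^k,\bxi^k)-\bv^k = \big(\g(\x^k,\bxi^k)-\nabla F(\x^k)\big) + \big(\nabla F(\x^k)-\bv^k\big)$, the first term being a martingale difference with respect to $\cF_{k}$ with $p$-th conditional moment bounded by $\sum_{i=1}^n\sigma^p = n\sigma^p$ (relation 5 of Lemma \ref{lm:useful} together with Assumption \ref{as:noise}(2)), and the second term handled by the triangle inequality directly.

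Plugging this decomposition in, I would take expectations and split: the deterministic piece contributes $\frac{1}{\sqrt n}(\tfrac1\beta-1)\sum_{k=0}^t\lambda^{t-k+1}\E\|\nabla F(\x^k)-\bv^k\|$, which is exactly the second term in the claim. For the noise piece, I have $\frac{1}{\sqrt n}(\tfrac1\beta-1)\E\big\|\sum_{k=0}^t \lambda^{t-k+1}\boldsymbol{n}^k\big\|$ — treating $\lambda^{t-k+1}\boldsymbol{n}^k$ as a (scaled) martingale difference sequence (fixed $t$, summing over $k$), Lemma \ref{lm:mds} bounds this by $2\sqrt2 \cdot\frac{1}{\sqrt n}(\tfrac1\beta-1)\E\big[\big(\sum_{k=0}^t\lambda^{(t-k+1)p}\|\boldsymbol{n}^k\|^p\big)^{1/p}\big]$; pulling out by sub-additivity of $x\mapsto x^{1/p}$ is not valid inside, but I can bound $\|\boldsymbol{n}^k\|^p$ in conditional expectation and use the tower property together with $\E[(\sum a_k X_k)^{1/p}]\le (\sum a_k \E X_k)^{1/p}$ (Jensen, since $x^{1/p}$ concave for $p\ge1$) to get $2\sqrt2\cdot\frac{1}{\sqrt n}(\tfrac1\beta-1)\big(\sum_{k=0}^t\lambda^{(t-k+1)p}\, n\sigma^p\big)^{1/p} = 2\sqrt2\, n^{1/p-1/2}\sigma(\tfrac1\beta-1)\big(\sum_{k=0}^t\lambda^{(t-k+1)p}\big)^{1/p}$. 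Hmm — this gives $n^{1/p-1/2}$ whereas the claim has $n^{1/2}$; the discrepancy is presumably because the author uses the cruder bound $\|\boldsymbol{n}^k\|^p = \|[\boldsymbol{n}_1^k;\ldots;\boldsymbol{n}_n^k]\|^p \le \big(\sum_i\|\boldsymbol{n}_i^k\|\big)^p \le n^{p-1}\sum_i\|\boldsymbol{n}_i^k\|^p$ so $\E[\|\boldsymbol{n}^k\|^p\mid\cF_k]\le n^{p-1}\cdot n\sigma^p = n^p\sigma^p$, yielding the factor $n$ (hence $n^{1/\!\!\!\!\phantom{p}}\cdot\frac{1}{\sqrt n} = n^{1/2}$) — I would adopt that route to match. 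The main obstacle is getting the martingale/measurability bookkeeping exactly right: $\boldsymbol{n}^k=\g(\x^k,\bxi^k)-\nabla F(\x^k)$ is $\cF_{k+1}$-measurable with $\E[\boldsymbol{n}^k\mid\cF_k]=0$ since $\x^k$ is $\cF_k$-measurable (it depends only on $\bxi^0,\ldots,\bxi^{k-1}$), so the index shift in Lemma \ref{lm:mds}'s filtration must be checked carefully, and the concavity/tower-property interchange for the $p$-th moment sum needs the bound on conditional moments to be uniform, which Assumption \ref{as:noise}(2) supplies.
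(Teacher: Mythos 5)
Your overall route is the paper's: unroll \eqref{eq:compact-update-y} into powers of $\bW-\tfrac1n\one_n\one_n^\top$ acting on the increments, rewrite $\bv^k-\bv^{k-1}=(\tfrac1\beta-1)\big(\g(\x^k,\bxi^k)-\bv^k\big)$, split into the noise part $\g(\x^k,\bxi^k)-\nabla F(\x^k)$ and the estimation part $\nabla F(\x^k)-\bv^k$, handle the former with Lemma \ref{lm:mds} plus Jensen/tower and relation 5 of Lemma \ref{lm:useful} (the crude $n^{p}\sigma^p$ conditional bound, which is exactly what the paper uses to produce the $n^{1/2}$ factor), and the latter by the triangle inequality. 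Your measurability remarks and the constant bookkeeping at the end are correct, and your aside that the sharper bound $\E[\|\g(\x^k,\bxi^k)-\nabla F(\x^k)\|^p\mid\cF_k]\le n\sigma^p$ is available (since $p\le 2$) is also right — it would prove a strictly stronger statement, so choosing the cruder route only matters for matching the stated constants.

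There is, however, one step in your chain that does not go through as written. You first bound $\|\y^t-\one_n\otimes\oy^t\|\le\sum_{k=0}^t\lambda^{t-k+1}\|\bv^k-\bv^{k-1}\|$, i.e.\ you apply the operator-norm bound and the triangle inequality term by term \emph{before} splitting off the noise. From that point the noise contribution is $(\tfrac1\beta-1)\sum_k\lambda^{t-k+1}\E\|\g(\x^k,\bxi^k)-\nabla F(\x^k)\|$, and re-expressing it as $(\tfrac1\beta-1)\,\E\big\|\sum_k\lambda^{t-k+1}\big(\g(\x^k,\bxi^k)-\nabla F(\x^k)\big)\big\|$ is an inequality in the wrong direction (norm of sum $\le$ sum of norms), so Lemma \ref{lm:mds} cannot be invoked from where your chain stands. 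If you genuinely commit to the early sum-of-norms bound, the noise term comes out as $\sigma\sqrt n\,(\tfrac1\beta-1)\sum_k\lambda^{t-k+1}$, which carries the $\ell_1$ weight $\sum_k\lambda^{t-k+1}$ rather than the claimed $\big(\sum_k\lambda^{(t-k+1)p}\big)^{1/p}$ and hence does not imply the lemma (the gap is a factor of order $(1-\lambda)^{1/p-1}$ when $\lambda\to1$). The fix is exactly the paper's ordering: keep $\y^t-\one_n\otimes\oy^t=\sum_k\big((\bW-\tfrac1n\one_n\one_n^\top)^{t-k+1}\otimes\bI_d\big)(\bv^k-\bv^{k-1})$ intact, split the increment \emph{inside} this vector sum, use the triangle inequality only to separate the two resulting sums, bound the $\nabla F-\bv$ sum by $\sum_k\lambda^{t-k+1}\|\nabla F(\x^k)-\bv^k\|$, and apply Lemma \ref{lm:mds} to the full matrix-weighted noise sum (the deterministic matrix factors keep it a martingale difference sequence, and the bound $\|(\bW-\tfrac1n\one_n\one_n^\top)^{t-k+1}\otimes\bI_d\|_2\le\lambda^{t-k+1}$ is absorbed inside the $p$-th-power sum). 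With that reordering your argument coincides with the paper's proof.
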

\begin{proof}
Similar to \eqref{eq:cns-err-comp-long}, we have 
\begin{align}
\label{eq:ycns-comp-long}
     \frac{1}{n} \sum_{i =1}^n \| \y_i^t - \oy^t \| \le \frac{1}{\sqrt{n}} \| \y^t - \one_n \otimes \oy^t \|. 
\end{align} 
Following from \eqref{eq:compact-update-y}, 
\begin{align}
   &  \y^t - \one_n \otimes \oy^t  \\
    \overset{\eqref{eq:compact-update-y}}{=} \ &  \big( \bI_{nd}- \frac{1}{n}\one_n \one_n^\top  \otimes \bI_d \big) \big( \bW \otimes \bI_d \big) (\y^{t - 1} + \bv^t - \bv^{t - 1})  \notag \\
    = \ &  \big( \bW \otimes \bI_d - \frac{1}{n}\one_n \one_n^\top  \otimes \bI_d \big) \y^{t - 1} +  \big( \bW \otimes \bI_d - \frac{1}{n}\one_n \one_n^\top  \otimes \bI_d \big) ( \bv^t - \bv^{t - 1}) \notag \\
    \overset{(i)}{=} \ &   \big( \bW \otimes \bI_d - \frac{1}{n}\one_n \one_n^\top  \otimes \bI_d \big)\big( \bI_{nd} - \frac{1}{n} \one_n \one_n^\top \otimes \bI_d\big) \y^{t - 1} + \big( \bW \otimes \bI_d - \frac{1}{n}\one_n \one_n^\top  \otimes \bI_d \big) ( \bv^t - \bv^{t - 1})  \notag \\
    \overset{(ii)}{=} \ &  \sum_{k = 0}^t \big( \bW \otimes \bI_d - \frac{1}{n}\one_n \one_n^\top  \otimes \bI_d \big)^{t - k + 1 } ( \bv^t - \bv^{t - 1} ), \label{eq:ycns-rcur}
\end{align}
where we used relation 3 in Lemma \ref{lm:useful} in $(i)$ and used $\y_i^0 = \zero_d, \forall i \in [n]$ in $(ii)$. From the update in \eqref{eq:compact-update-v}, we have
\begin{align*}
    \bv^t - \bv^{t - 1} = (\beta - 1) \bv^{t - 1} + (1 - \beta) \g(\x^t, \bxi^t) = (1 - \beta)(\bv^t - \bv^{t- 1}) + (1 - \beta)(\g(\x^t, \bxi^t) - \bv^t).
\end{align*}
Then, there holds, 
\begin{align}
\label{eq:vt-diff}
     \bv^t - \bv^{t - 1} = (\frac{1}{\beta} - 1)(\g(\x^t, \bxi^t) - \bv^t) = (\frac{1}{\beta} - 1)(\g(\x^t, \bxi^t) - \nabla F(\x^t) + \nabla F(\x^t) - \bv^t).  
\end{align}
Putting the relation above into \eqref{eq:ycns-rcur} and applying \eqref{eq:ycns-rcur} recursively, from $\y_i^0 = \oy^0$, we have
\begin{align}
\label{eq:longy-cns}
\begin{split}
    \| \y^t - \one_n \otimes \oy^t \| 
    & \le (\frac{1}{\beta} - 1) \|  \sum_{k = 0}^t \big( \bW \otimes \bI_d - \frac{1}{n}\one_n \one_n^\top  \otimes \bI_d \big)^{t - k + 1 } \big(\g(\x^k, \bxi^k) - \nabla F(\x^k) \big)  \|  \\
    & \quad + (\frac{1}{\beta} - 1) \|  \sum_{k = 0}^t \big( \bW \otimes \bI_d - \frac{1}{n}\one_n \one_n^\top  \otimes \bI_d \big)^{t - k + 1 } \big( \nabla F(\x^k) - \bv^k \big)  \| 
\end{split}
\end{align}
We note that the first half of the right hand side above can be addressed by Lemma \ref{lm:mds}:
\begin{align}
\label{eq:est-error-p1}
\begin{split}
    & \E \Big [ \|  \sum_{k = 0}^t \big( \bW \otimes \bI_d - \frac{1}{n}\one_n \one_n^\top  \otimes \bI_d \big)^{t - k + 1 } \big(\g(\x^k, \bxi^k) - \nabla F(\x^k) \big)  \| \Big] \\
    \le \ & 2 \sqrt{2} \E \Big[ \Big(   \sum_{k = 0}^t  \lambda^{(t- k + 1)p} \| \g(\x^k, \bxi^{k}) - \nabla F(\x^k) \|^p \Big)^{\frac{1}{p}} \Big]. 
\end{split}
\end{align} 
We observe that
\begin{align}
\label{eq:est-error-p1-split}
\begin{split}
    & 2 \sqrt{2} \E \Big[ \Big(   \sum_{k = 0}^t  \lambda^{(t- k + 1)p} \| \g(\x^k, \bxi^{k}) - \nabla F(\x^k) \|^p \Big)^{\frac{1}{p}} \mid \cF_{t - 1} \Big] \\
    \le \ & 2 \sqrt{2} \E \Big[ \Big(   \sum_{k = 0}^t  \lambda^{(t- k + 1)p} \big( \sum_{i=1}^n \| \g_i(\x_i^k, \bxi_i^{k}) - \nabla f_i(\x_i^k) \| \big)^p \Big)^{\frac{1}{p}} \mid \cF_{t - 1} \Big]    \\
    \overset{(i)}{\le} \ & 2 \sqrt{2} \E \Big[ \Big(   \sum_{k = 0}^t\sum_{i=1}^n \lambda^{(t- k + 1)p} n^{p - 1}  \| \g_i(\x_i^k, \bxi_i^{k}) - \nabla f_i(\x_i^k) \|^p \Big)^{\frac{1}{p}} \mid \cF_{t - 1} \Big]   \\
    \overset{(ii)}{\le} \  &  2 \sqrt{2} \Big( \E \big[  \sum_{i=1}^n \lambda^p n^{p - 1} \| \g_i(\x_i^t, \bxi_i^{t}) - \nabla f_i(\x_i^t) \|^p \mid \cF_{t - 1} \big] \\
    & \quad + \sum_{k= 0}^{t - 1} \sum_{i = 1}^n  \lambda^{(t - k + 1)p} n^{p-1} \| \g_i(\x_i^k, \bxi_i^k)  - \nabla f_i(\x^k) \|^p \Big)^{\frac{1}{p}} \\ 
    \overset{(iii)}{\le} \ & 2 \sqrt{2} \Big(  \lambda^p n^p \sigma^p  +  \sum_{k= 0}^{t - 1}\sum_{i = 1}^n  \lambda^{(t - k + 1)p} n^{p-1} \| \g_i(\x_i^k, \bxi_i^k)  - \nabla f_i(\x_i^k) \|^p \Big)^{\frac{1}{p}}, 
\end{split}
\end{align}
where we used relation $5$ from Lemma \ref{lm:useful} in $(i)$, Jensen's inequality in $(ii)$, and Assumption \ref{as:noise} in $(iii)$. From \eqref{eq:est-error-p1}, taking expectations on both sides of \eqref{eq:est-error-p1-split}, and applying the above arguments recursively from $\cF_{t - 2}$ to $\cF_0$, we have
\begin{align*}
    \E \Big [ \|  \sum_{k = 0}^t \big( \bW \otimes \bI_d - \frac{1}{n}\one_n \one_n^\top  \otimes \bI_d \big)^{t - k + 1 } \big(\g(\x^k, \bxi^k) - \nabla F(\x^k) \big)  \| \Big] \le 2\sqrt{2} \Big( \sum_{k = 0}^t \lambda^{(t - k + 1)p}  \Big)^{\frac{1}{p}} n  \sigma.
\end{align*}
Therefore, using the relation above, and \eqref{eq:ycns-comp-long}, \eqref{eq:longy-cns}, we reach the desired relation. 
\end{proof}

We then bound average gradient estimation errors  $\beps_1^t = \ov^t - \onab F(\x^t)$. 

\begin{lemma}[Average gradient estimation errors]
\label{lm:avg-gd-est-err}
 For all $t = 0, \ldots, T$, we have
\begin{align*}
    & \E \big[ \| \ov^t - \onab F(\x^t) \| \big] \le \beta^{t + 1} \|  \nabla f(\ox^0) \|  +  \frac{ 2\sqrt{2}}{n^{1 - \frac{1}{p}}} \Big(  \sum_{k = 0}^t \beta^{(t - k)p} (1- \beta)^p  \Big)^{\frac{1}{p}} \sigma  + \sum_{k=0}^t \beta^{t - k + 1} \Big( \frac{2\alpha \lambda}{1 - \lambda} + \alpha \Big)L.
\end{align*}
\end{lemma}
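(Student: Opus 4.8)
The quantity to control is $\beps_1^t = \ov^t - \onab F(\x^t)$ from \eqref{eq:bpes_decom}. The plan is to convert the momentum recursion into a one-step recursion for $\beps_1^t$, unroll it, and bound the three resulting pieces separately. Averaging \eqref{eq:compact-update-v} over the network gives $\ov^t = \beta\,\ov^{t-1} + (1-\beta)\,\frac1n\sum_{i=1}^n \g_i(\x_i^t,\bxi_i^t)$; subtracting $\onab F(\x^t)$, splitting $\onab F(\x^t) = \beta\,\onab F(\x^t) + (1-\beta)\,\onab F(\x^t)$ and inserting $\pm\,\onab F(\x^{t-1})$ into the $\beta$ part, I would obtain
\begin{align*}
\beps_1^t = \beta\,\beps_1^{t-1} + \beta\big(\onab F(\x^{t-1}) - \onab F(\x^t)\big) + (1-\beta)\,\frac1n\sum_{i=1}^n\big(\g_i(\x_i^t,\bxi_i^t) - \nabla f_i(\x_i^t)\big),
\end{align*}
valid for all $t\ge 0$ with $\beps_1^{-1} = \ov^{-1} - \onab F(\x^{-1}) = -\nabla f(\ox^0)$, since $\bv_i^{-1}=\zero_d$ and $\x_i^{-1}=\x_i^0=\ox^0$ by initialization. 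Unrolling down to $t=-1$ decomposes $\beps_1^t$ into: (i) the decaying initialization term $\beta^{t+1}\beps_1^{-1}$, whose norm is exactly $\beta^{t+1}\|\nabla f(\ox^0)\|$; (ii) a drift term $B_t := \sum_{k=0}^t \beta^{t-k+1}\big(\onab F(\x^{k-1}) - \onab F(\x^k)\big)$; and (iii) a stochastic term $A_t := (1-\beta)\sum_{k=0}^t\beta^{t-k}\,\frac1n\sum_{i=1}^n\big(\g_i(\x_i^k,\bxi_i^k) - \nabla f_i(\x_i^k)\big)$. It then remains to bound $\E[\|A_t\|]$ and $\|B_t\|$.

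For the stochastic term, the key step is to view the doubly-indexed family $\{\beta^{t-k}(\g_i(\x_i^k,\bxi_i^k)-\nabla f_i(\x_i^k))\}$, ordered lexicographically in $(k,i)$, as a martingale difference sequence: conditioned on $\cF_k$ together with $\bxi_1^k,\dots,\bxi_{i-1}^k$, each term is mean zero because $\x_i^k$ is $\cF_k$-measurable and, by Assumption \ref{as:noise}(3), $\bxi_i^k$ is independent of that conditioning $\sigma$-algebra, so the conditional mean reduces to $\nabla f_i(\x_i^k)$ by Assumption \ref{as:noise}(1). Lemma \ref{lm:mds} then gives $\E[\|A_t\|]\le \tfrac{2\sqrt2(1-\beta)}{n}\,\E\big[(\sum_{k=0}^t\sum_{i=1}^n\beta^{(t-k)p}\|\g_i(\x_i^k,\bxi_i^k)-\nabla f_i(\x_i^k)\|^p)^{1/p}\big]$, and Jensen's inequality (concavity of $x\mapsto x^{1/p}$ for $p\ge 1$) with the tower property and Assumption \ref{as:noise}(2) — equivalently, the recursive-conditioning argument used in the proof of Lemma \ref{lm:cns-y} — yields $\E[\|A_t\|]\le \tfrac{2\sqrt2(1-\beta)}{n}\,\sigma\,(n\sum_{k=0}^t\beta^{(t-k)p})^{1/p}$, which is exactly $\tfrac{2\sqrt2}{n^{1-1/p}}\big(\sum_{k=0}^t\beta^{(t-k)p}(1-\beta)^p\big)^{1/p}\sigma$. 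I expect this to be the main obstacle: one must pick the right joint time/node filtration so that Lemma \ref{lm:mds} applies, and then carry the $1/n$ averaging factor through the $p$-th power so that it becomes $1/n^{1-1/p}$, rather than the vacuous factor $1$ that a plain triangle inequality on $\|\tfrac1n\sum_i(\cdot)\|$ would produce.

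For the drift term, I would use $L$-smoothness (Assumption \ref{as:smooth}) to get $\|\onab F(\x^{k-1})-\onab F(\x^k)\|\le \tfrac{L}{n}\sum_{i=1}^n\|\x_i^{k-1}-\x_i^k\|$, then bound the per-step movement from the $\x$-update \eqref{eq:compact-update}: writing $\x_i^k-\x_i^{k-1} = \sum_r w_{ir}(\x_r^{k-1}-\ox^{k-1}) - (\x_i^{k-1}-\ox^{k-1}) - \alpha\sum_r w_{ir}\,\y_r^{k-1}/\|\y_r^{k-1}\|$ and averaging over $i$, the double stochasticity of $\bW$, the unit norm of each $\y_r^{k-1}/\|\y_r^{k-1}\|$, and the consensus bound \eqref{eq:cns-err} of Lemma \ref{lm:cns} give $\tfrac1n\sum_i\|\x_i^k-\x_i^{k-1}\|\le \tfrac{2\alpha\lambda}{1-\lambda}+\alpha$, hence $\|\onab F(\x^{k-1})-\onab F(\x^k)\|\le (\tfrac{2\alpha\lambda}{1-\lambda}+\alpha)L$. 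Summing the geometric coefficients yields $\|B_t\|\le \sum_{k=0}^t\beta^{t-k+1}(\tfrac{2\alpha\lambda}{1-\lambda}+\alpha)L$, and adding the three bounds gives the claim. This last part is routine once Lemma \ref{lm:cns} is in hand.
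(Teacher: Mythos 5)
Your proposal is correct and takes essentially the same route as the paper's proof: the identical unrolled recursion for $\beps_1^t$ with the $\beta^{t+1}\|\nabla f(\ox^0)\|$ initialization term, the same application of Lemma \ref{lm:mds} to the doubly-indexed noise sum followed by the recursive conditioning/Jensen argument that produces the $n^{1-1/p}$ factor, and a drift bound of $(\tfrac{2\alpha\lambda}{1-\lambda}+\alpha)L$ per step driven by Lemma \ref{lm:cns}. The only cosmetic difference is that you bound $\tfrac1n\sum_i\|\x_i^k-\x_i^{k-1}\|$ directly from the update using double stochasticity, whereas the paper inserts $\pm\nabla f_i(\ox^{k-1})$ and $\pm\nabla f_i(\ox^{k})$ in gradient space; both yield the same constant.
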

\begin{proof}
Following from the step 4 in Algorithm \ref{alg:gt-pmnsgd}, $\forall i \in [n]$, 
\begin{align}
\label{eq:avg-gd-est-err-recursion}
\bv_i^t - \nabla f_i(\x_i^t)
& = \beta (\bv_i^{t - 1} - \nabla f_i(\x_i^{t - 1})) + (1 - \beta)( \g_i(\x_i^t, \bxi_i^{t}) - \nabla f_i(\x_i^t) ) +  \beta (\nabla f_i(\x_i^{t -1}) - \nabla f_i(\x_i^{t})). 
\end{align}
Averaging the above relation over $i = 1, \ldots, n$ leads to that: 
\begin{align*}
    \beps_1^t 
    & =  \ov^t - \onab F(\x^t) \\
    & = \beta (\ov^{t - 1} - \onab F(\x^{t - 1})) +  
 (1-\beta)  \cdot \underbrace{ \frac{1}{n} \sum_{i=1}^n ( \g_i(\x_i^t, \bxi_i^{t}) - \nabla f_i(\x_i^t) ) }_{:= \bs^t \in \R^d}  + \beta \cdot \underbrace{\frac{1}{n}\sum_{i=1}^n ( \nabla f_i(\x_i^{t - 1}) - \nabla f_i(\x_i^t) )}_{:= \z^t \in \R^d} \\
 & = \beta^{t + 1} \beps_1^{-1} + \sum_{k=0}^t \beta^{t - k} (1 - \beta) \bs^k + \sum_{k = 0}^t \beta^{t - k + 1} \z^k.
\end{align*}
Taking Euclidean norms on both sides gives that
\begin{align}
\label{eq:epst1_bd}
    \| \beps_1^t \| \le \beta^{t + 1} \| \beps_1^{-1} \| + \| \sum_{k = 0}^{t} \beta^{t - k} (1 - \beta)  \bs^k \| + \| \sum_{k=0}^t \beta^{t - k + 1} \z^k \|. 
\end{align} 
We now bound the terms on the right hand side of \eqref{eq:epst1_bd} one by one. First, 
\begin{align}
\label{eq:espt1-bd-1}
    \| \beps_1^{-1} \| = \| \ov^{-1} -  \frac{1}{n} \sum_{i=1}^n \nabla f_i(\ox^0) \| = \|  \nabla f (\ox^0) \|. 
\end{align}
Second, notice that $\{\beta^{t - k}(1 - \beta)(\g_i(\x_i^k, \bxi_i^{k}) - \nabla f_i(\x_i^k))\}$ is a martingale difference sequence that falls into the pursuit of Lemma \ref{lm:mds}, and thus we obtain
\begin{align}
\label{eq:est-error-p2}
\begin{split}
    & \E \Big[ \| \sum_{k = 0}^{t} \beta^{t - k} (1 - \beta)  \bs^k \|  \Big] \\
    = \ & \frac{1}{n} \E \Big[ \| \sum_{k = 0}^{t} \sum_{i = 1}^n  \beta^{t - k} (1 - \beta)  ( \g_i(\x_i^k,  \bxi_i^{k}) - \nabla f_i(\x_i^k)) \|   \Big] \\
    \le \ & \frac{2\sqrt{2}}{n} \E \Big[  \Big( \sum_{k = 0}^{t} \sum_{i = 1}^n  \| \beta^{t - k} (1 - \beta)  ( \g_i(\x_i^k,  \bxi_i^{k}) - \nabla f_i(\x_i^k)) \|^p \Big)^{\frac{1}{p}}   \Big].
\end{split}
\end{align}
Note that
\begin{align}
\label{eq:est-error-p2-split}
\begin{split}
    & \frac{2\sqrt{2}}{n} \E \Big[  \Big( \sum_{k = 0}^{t} \sum_{i = 1}^n  \| \beta^{t - k} (1 - \beta)  ( \g_i(\x_i^k,  \bxi_i^{k}) - \nabla f_i(\x_i^k)) \|^p \Big)^{\frac{1}{p}} \mid \cF_{t - 1}  \Big] \\
    \overset{(i)}{\le} \ & \frac{2\sqrt{2}}{n}  \Big( \E \Big[   \sum_{k = 0}^{t} \sum_{i = 1}^n   \| \beta^{t - k} (1 - \beta)  (\g_i(\x_i^k,  \bxi_i^{k}) - \nabla f_i(\x_i^k)) \|^p  \mid \cF_{t - 1}  \Big] \Big)^{\frac{1}{p}} \\
    \le \ & \frac{ 2\sqrt{2} }{n} \Big( \E \Big[ \sum_{i = 1}^n (1 - \beta)^p  \| ( \g_i(\x_i^t,  \bxi_i^{t}) - \nabla f_i(\x_i^t)) \|^p  \mid \cF_{t - 1}  \Big] \\
    & \quad + \sum_{k = 0}^{t-1} \sum_{i = 1}^n  \| \beta^{t - k} (1 - \beta)  ( \g_i(\x_i^k,  \bxi_i^{k}) - \nabla f_i(\x_i^k)) \|^p \Big)^{\frac{1}{p}} \\
    \overset{(ii)}{\le} \ & \frac{ 2\sqrt{2}}{n}  \Big( n   (1 - \beta)^p \sigma^p  + \sum_{k = 0}^{t-1} \sum_{i=1}^n \| \beta^{t - k} (1 - \beta)(\g_i(\x_i^k, \bxi_i^k) - \nabla f_i(\x_i^k)) \|^p \Big)^{\frac{1}{p}}, 
\end{split}
\end{align}
where we used Jensen's inequality in $(i)$ and Assumption \ref{as:noise} in $(ii)$. From \eqref{eq:est-error-p2}, taking expectations on \eqref{eq:est-error-p2-split}, and recursively applying the preceding arguments from $\cF_{t - 2}$ to $\cF_0$, we have
\begin{align}
\label{eq:epst1-bd-2}
\begin{split}
    & \E \Big[ \| \sum_{k = 0}^{t} \beta^{t - k} (1 - \beta)  \bs^k \|  \Big] \le   \frac{ 2\sqrt{2}}{n^{1 - \frac{1}{p}}} \Big(  \sum_{k = 0}^t \beta^{(t - k)p} (1- \beta)^p  \Big)^{\frac{1}{p}} \sigma. 
\end{split}
\end{align} 
Third, 
\begin{align}
\label{eq:epst1-bd-3}
\begin{split}
    & \| \sum_{k=0}^t \beta^{t - k + 1} \z^k \| \\
    \le \ & \sum_{k = 0 }^t \beta^{t - k + 1} \| \frac{1}{n} \sum_{i = 1}^n (\nabla f_i(\x_i^{k - 1}) - \nabla f_i(\x_i^{ k}) )\|  \\
    \le \ & \sum_{k = 0 }^t \beta^{t - k + 1} \Big( \frac{1}{n} \sum_{i = 1}^n \| \nabla f_i(\x_i^{k - 1} ) - \nabla f_i(\ox^{k - 1}) \| + \frac{1}{n} \sum_{i = 1}^n \| \nabla f_i(\ox^{k - 1}) - \nabla f_i(\ox^{k} ) \| \\
    & \qquad + \frac{1}{n} \sum_{i = 1}^n \| \nabla f_i (\ox^{k} - \nabla f_i(\x_i^k) \| \Big) \\
    \overset{(i)}{\le} \ & \sum_{k = 0 }^t \beta^{t - k + 1} \Big( L \cdot \frac{1}{n} \sum_{i = 1}^n \| \x_i^{k - 1} - \ox^{ k -1 } \| +  L \cdot \frac{1}{n} \sum_{i = 1}^n  \| \ox^{k - 1} - \ox^k \| + L \cdot  \frac{1}{n} \sum_{i = 1}^n \| \ox^k - \x_i^k \| \Big) \\
    \overset{(ii)}{\le} \ & \sum_{k=0}^t \beta^{t - k + 1} \Big( \frac{2\alpha \lambda}{1 - \lambda} + \alpha \Big)L. 
\end{split}
\end{align}
where in $(i)$ we used Assumption \ref{as:smooth} and in $(ii)$ we used \eqref{eq:cns-err} in Lemma \ref{lm:cns}.  Putting relations \eqref{eq:espt1-bd-1}\eqref{eq:epst1-bd-2}\eqref{eq:epst1-bd-3} together leads to the final bound for this lemma.
\end{proof}

We next bound the stacked gradient estimation errors. 

\begin{lemma}[Stacked gradient estimation errors]
\label{lm:stacked-gd-est-err}
For all $t = 0, \ldots, T$, we have
\begin{align*}
    & \E \big[ \| \bv^t - \nabla F(\x^t) \| \big] \\
    \le \ & \beta^{t + 1} \| \nabla F(\one_n \otimes \ox^0) \| + 2\sqrt{2} \Big(\sum_{k=0}^t \beta^{(t - k)p}(1-\beta)^p \Big)^{\frac{1}{p}} n \sigma  + n \sum_{k=0}^t \beta^{t - k + 1} \Big(  \frac{2\alpha \lambda}{1 - \lambda} + \alpha \Big) L.
\end{align*} 
\end{lemma}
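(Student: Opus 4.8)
The plan is to carry out the argument of Lemma~\ref{lm:avg-gd-est-err} at the level of the stacked $nd$-dimensional vectors rather than the network average. First I would write the per-node recursion obtained from step~3 of Algorithm~\ref{alg:gt-pmnsgd},
\begin{align*}
\bv_i^t - \nabla f_i(\x_i^t) &= \beta\big(\bv_i^{t-1} - \nabla f_i(\x_i^{t-1})\big) + (1-\beta)\big(\g_i(\x_i^t,\bxi_i^t) - \nabla f_i(\x_i^t)\big) \\
&\quad + \beta\big(\nabla f_i(\x_i^{t-1}) - \nabla f_i(\x_i^t)\big),
\end{align*}
stack it into the long vectors to obtain $\bv^t - \nabla F(\x^t) = \beta(\bv^{t-1} - \nabla F(\x^{t-1})) + (1-\beta)\bs^t + \beta\z^t$ with $\bs^t := \g(\x^t,\bxi^t) - \nabla F(\x^t)$ and $\z^t := \nabla F(\x^{t-1}) - \nabla F(\x^t)$, and unroll it down to the initialization. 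Since $\bv^{-1} = \zero$ and $\x^{-1} = \one_n\otimes\ox^0$, unrolling gives $\bv^t - \nabla F(\x^t) = -\beta^{t+1}\nabla F(\one_n\otimes\ox^0) + \sum_{k=0}^t\beta^{t-k}(1-\beta)\bs^k + \sum_{k=0}^t\beta^{t-k+1}\z^k$, and the triangle inequality splits $\|\bv^t - \nabla F(\x^t)\|$ into the three terms in the statement: $\beta^{t+1}\|\nabla F(\one_n\otimes\ox^0)\|$, $\big\|\sum_{k=0}^t\beta^{t-k}(1-\beta)\bs^k\big\|$, and $\big\|\sum_{k=0}^t\beta^{t-k+1}\z^k\big\|$.

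For the stochastic (middle) term I would use that $\{\beta^{t-k}(1-\beta)\bs^k\}_k$ is a martingale-difference sequence with respect to $\{\cF_k\}$ — this uses Assumption~\ref{as:noise}(1) together with the $\cF_k$-measurability of $\x_i^k$ — so Lemma~\ref{lm:mds} gives $\E\big[\big\|\sum_k\beta^{t-k}(1-\beta)\bs^k\big\|\big] \le 2\sqrt{2}\,\E\big[\big(\sum_k\beta^{(t-k)p}(1-\beta)^p\|\bs^k\|^p\big)^{1/p}\big]$. I would then peel off conditional expectations from $\cF_{t-1}$ back to $\cF_0$ exactly as in~\eqref{eq:est-error-p2-split}: apply Jensen to move $\E[\cdot\mid\cF_{k-1}]$ inside the outer $(\cdot)^{1/p}$, bound $\|\bs^k\|^p \le \big(\sum_i\|\g_i(\x_i^k,\bxi_i^k)-\nabla f_i(\x_i^k)\|\big)^p \le n^{p-1}\sum_i\|\g_i(\x_i^k,\bxi_i^k)-\nabla f_i(\x_i^k)\|^p$ via relations~4 and~5 of Lemma~\ref{lm:useful}, and invoke Assumption~\ref{as:noise}(2) so that $\E[\|\bs^k\|^p\mid\cF_k]\le n^p\sigma^p$. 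Summing the resulting geometric-type recursion produces $2\sqrt{2}\big(\sum_{k=0}^t\beta^{(t-k)p}(1-\beta)^p\big)^{1/p} n\sigma$, which is the claimed middle term.

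For the drift (third) term I would pass the triangle inequality through the sum, $\big\|\sum_k\beta^{t-k+1}\z^k\big\| \le \sum_k\beta^{t-k+1}\|\nabla F(\x^{k-1})-\nabla F(\x^k)\|$, then apply $L$-smoothness with relation~4 of Lemma~\ref{lm:useful} to get $\|\nabla F(\x^{k-1})-\nabla F(\x^k)\| \le L\sum_i\|\x_i^{k-1}-\x_i^k\|$, and finally insert $\ox^{k-1},\ox^k$: $\|\x_i^{k-1}-\x_i^k\| \le \|\x_i^{k-1}-\ox^{k-1}\| + \|\ox^{k-1}-\ox^k\| + \|\ox^k-\x_i^k\|$. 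Summing over $i$ and using the consensus bound $\frac1n\sum_i\|\x_i^t-\ox^t\|\le\frac{\alpha\lambda}{1-\lambda}$ from Lemma~\ref{lm:cns} together with $\|\ox^k-\ox^{k-1}\| = \alpha\|\ty^{k-1}\|\le\alpha$ from~\eqref{chap4-2:eq:avg-update} gives $\sum_i\|\x_i^{k-1}-\x_i^k\|\le n\big(\frac{2\alpha\lambda}{1-\lambda}+\alpha\big)$, hence the third term $n\sum_{k=0}^t\beta^{t-k+1}\big(\frac{2\alpha\lambda}{1-\lambda}+\alpha\big)L$; the boundary indices $k=0$ and $k-1=-1$ are trivial because $\x^{-1}=\x^0=\one_n\otimes\ox^0$. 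The main obstacle, and the only nonroutine point, is the iterated martingale-difference conditioning in the second step — the same device already used in Lemmas~\ref{lm:cns-y} and~\ref{lm:avg-gd-est-err} — where one must keep precise track of which quantities are $\cF_k$-measurable and apply Jensen in the correct direction; with that in hand the whole argument is a re-run of the Lemma~\ref{lm:avg-gd-est-err} computation using the Euclidean norm on $\R^{nd}$ in place of the network average.
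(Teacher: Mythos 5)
Your proposal is correct and follows essentially the same route as the paper's proof: the same stacked recursion unrolled to $\tilde{\beps}_1^{-1}$, the same three-term split with the martingale-difference term handled via Lemma~\ref{lm:mds} and iterated conditioning, and the same drift bound using $L$-smoothness, the consensus bound of Lemma~\ref{lm:cns}, and $\|\ox^{k-1}-\ox^k\|\le\alpha$. Your minor deviations (applying smoothness before inserting the averages, and conditioning on $\cF_k$ rather than $\cF_{t-1}$) are equivalent or, if anything, slightly cleaner.
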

\begin{proof}
Define $\tilde{\beps}_1^t := \bv^t - \nabla F(\x^t) \in \R^{nd}$. Similar to \eqref{eq:avg-gd-est-err-recursion}, we have
\begin{align*}
    \bv^t - \nabla F(\x^t) 
    & = \beta(\bv^{t - 1} - \nabla F(\x^{t - 1} )) + (1 - \beta) \underbrace{(\g(\x^t, \bxi^t)  - \nabla F(\x^t))}_{:= \ts^t \in \R^{nd}} + \beta \underbrace{(\nabla F(\x^{t - 1}) - \nabla F(\x^t))}_{:= \tz^t \in \R^{nd}} \\
    & = \beta^{t + 1}  \tilde{\beps}_1^{-1}  + \sum_{k=0}^t \beta^{t -k} (1 - \beta) \ts^k + \sum_{k=0}^t  \beta^{t - k + 1} \tz^k. 
\end{align*}
Taking Euclidean norms on both sides gives that
\begin{align*}
    \| \tbeps_1^t \| \le \beta^{t + 1} \| \tbeps_1^{-1} \| + \| \sum_{k=0}^t \beta^{t -k} (1 - \beta) \ts^k \| + \| \sum_{k=0}^t  \beta^{t - k + 1} \tz^k \|. 
\end{align*}
Similar to the analysis in  Lemma \ref{lm:avg-gd-est-err}, we bound the right hand side above term by term. First, 
\begin{align*}
    \| \tbeps_1^{-1} \| = \| \nabla F(\one_n \otimes \ox^0) \|. 
\end{align*}
Second, notice also that $\{\beta^{t - k} (1- \beta) \ts^k\}$ is a martingale difference sequence and can be dealt with using Lemma \ref{lm:mds}. We have
\begin{align}
\label{eq:est-error-p3}
\begin{split}
    & \E \big[ \| \sum_{k = 0}^t \beta^{t -k} ( 1- \beta) \ts^k  \|  ] \\
    = \ & \E \big[ \|  \sum_{k = 0}^t \beta^{t - k}(1 - \beta)(\g(\x^t, \bxi^{t, b}) - \nabla F(\x^t) ) \|  \big] \\ 
    \le  \ & 2\sqrt{2} \E\big[ \big( \sum_{k = 0}^t \beta^{(t - k)p} ( 1- \beta)^p \| \g(\x^t, \bxi^{t}) - \nabla F(\x^t) \|^p  \big)^{\frac{1}{p}} \big]. 
\end{split}
\end{align} 
In addition,
\begin{align}
\label{eq:est-error-p3-split}
\begin{split}
    & 2\sqrt{2} \E\big[ \big( \sum_{k = 0}^t \beta^{(t - k)p} ( 1- \beta)^p \| \g(\x^t, \bxi^{t}) - \nabla F(\x^t) \|^p  \big)^{\frac{1}{p}} \mid \cF_{t - 1 } \big] \\
    \overset{(i)}{\le} \ & 2\sqrt{2} \Big( \E \big[ \sum_{k = 0}^t \beta^{(t - k)p} ( 1- \beta)^p \| \g(\x^t, \bxi^{t}) - \nabla F(\x^t) \|^p \big] \mid \cF_{t - 1} \Big)^{\frac{1}{p}} \\
    \overset{(ii)}{\le} \ & 2\sqrt{2} \Big( \E \big[ \sum_{k = 0}^t \beta^{(t - k)p} ( 1- \beta)^p \big( \sum_{i=1}^n \| \g_i(\x_i^t, \bxi_i^{t}) - \nabla f_i(\x_i^t) \| \big)^p \big] \mid \cF_{t - 1} \Big)^{\frac{1}{p}} \\ 
    \overset{(iii)}{\le} \ & 2\sqrt{2} \Big( \E \big[ \sum_{k = 0}^t  \sum_{i=1}^n \beta^{(t - k)p} ( 1- \beta)^p n^{p - 1}  \| \g_i(\x_i^t, \bxi_i^{t}) - \nabla f_i(\x_i^t) \|^p \big] \mid \cF_{t - 1} \Big)^{\frac{1}{p}} \\ 
    = \ & 2\sqrt{2} \Big( \E \big[ \sum_{i=1}^n  (1- \beta)^p n^{p - 1} \| \nabla \g_i(\x_i^{t}, \bxi_i^{t}) - \nabla f_i(\x_i^t)\|^p \mid \cF_{t - 1} \big] \\
    & \quad + \sum_{k = 0}^{t - 1} \sum_{i=1}^n \beta^{(t - k)p} ( 1- \beta)^p n^{p - 1}  \| \g_i(\x_i^t, \bxi_i^{t}) - \nabla f_i(\x_i^t) \|^p \Big)^{\frac{1}{p}}  \\
    \le \ & 2\sqrt{2}\Big( (1- \beta)^p n^p \sigma^p  + \sum_{k = 0}^{t - 1} \sum_{i=1}^n \beta^{(t - k)p} ( 1- \beta)^p n^{p - 1}  \| \g_i(\x_i^t, \bxi_i^{t}) - \nabla f_i(\x_i^t) \|^p \Big)^{\frac{1}{p}},  
\end{split}
\end{align}
where in $(i)$ we used Jensen's inequality, and in $(ii), (iii)$ we used relations $4$,and $5$ in Lemma \ref{lm:useful}, respectively. Based on \eqref{eq:est-error-p3}, taking expectations on both sides of \eqref{eq:est-error-p3-split}, and applying the above arguments from $\cF_{t - 2}$ to $\cF_0$, we obtain
\begin{align*}
    \E \big[ \| \sum_{k = 0}^t \beta^{t -k} ( 1- \beta) \ts^k  \| \big] \le 2\sqrt{2} \Big(\sum_{k=0}^t \beta^{(t - k)p}(1-\beta)^p \Big)^{\frac{1}{p}} n \sigma. 
\end{align*}
Third, 
\begin{align*}
    & \| \sum_{k=0}^t  \beta^{t - k + 1} \tz^k \| \\
    \le \ &  \sum_{k = 0}^t  \beta^{t - k + 1} \| \nabla F(\x^{k-1}) - \nabla F(\x^k) \| \\ 
    \le \ & \sum_{k = 0}^t \sum_{i=1}^n  \beta^{t - k + 1}  \| \nabla f_i(\x_i^{k -1}) -  \nabla f_i(\x_i^k) \| \\
    \le \ & \sum_{k = 0}^t \sum_{i=1}^n  \beta^{t - k + 1}  \Big( \| \nabla f_i(\x_i^{k -1}) - \nabla f_i(\ox^{k-1})\| + \| \nabla f_i(\ox^{k-1}) - \nabla f_i(\ox^k) \| + \| \nabla f_i(\x_i^k) - f_i(\ox^k)\| \Big)\\ 
    \overset{(i)}{\le} \ & n \sum_{k=0}^t \beta^{t - k + 1} \Big(  \frac{2\alpha \lambda}{1 - \lambda} + \alpha \Big) L.
\end{align*}
where $(i)$ follows from similar arguments in \eqref{eq:epst1-bd-2}. 
\end{proof} 

Now we are ready to prove our main theorems.
\begin{proof}[Proof of Theorem \ref{thm:main}]
We observe that 
\begin{align}
\begin{split}
\label{eq:local-measure}
    \frac{1}{n}\sum_{t = 0}^{T - 1} \sum_{i=1}^n \E \big[ \| \nabla  f(\x_i^t) \|] 
    & \le \frac{1}{n}\sum_{t = 0}^{T - 1} \sum_{i=1}^n \E \big[\| \nabla f(\x_i^t) - \nabla f(\ox^t) \| + \| \nabla f(\ox^t) \| \big] \\
    & \overset{\eqref{eq:cns-err}}{\le } T \cdot \frac{\alpha \lambda L}{ 1- \lambda} + \sum_{t = 0}^{T - 1} \E \big[ \| \nabla f(\ox^t) \| \big]. 
\end{split}
\end{align}
From Lemmas \ref{lm:dclm-main}, \eqref{eq:bpes_decom}, and Lemma \ref{lm:cns-y}, 
\begin{align*}
    & \sum_{t=0}^{T - 1} \alpha \| \nabla f(\ox^t) \| \\
    \le \ & f(\ox^0) - f_*  + \sum_{t =0}^{T - 1} 2\alpha \big( \| \beps_1^t \| + \| \onab F(\x^t) - \nabla f(\ox^t)  \| \big)  + \sum_{t =0}^{T - 1} \frac{\alpha}{n} \sum_{i = 1}^n \| \oy^{t} - \y_i^{t} \| + \sum_{t =0}^{T - 1} \frac{L}{2} \alpha^2  \\
    \le \ &  f(\ox^0) - f_* \\ 
    & \quad + \sum_{t = 0}^{T - 1} 2\alpha \Big[ \beta^{t + 1} \|  \nabla f(\ox^0) \|  +  \frac{ 2\sqrt{2}}{n^{1 - \frac{1}{p}}} \Big(  \sum_{k = 0}^t \beta^{(t - k)p} (1- \beta)^p  \Big)^{\frac{1}{p}} \sigma  + \sum_{k=0}^t \beta^{t - k + 1} \Big( \frac{2\alpha \lambda}{1 - \lambda} + \alpha \Big)L + \frac{\alpha \lambda L}{ 1- \lambda}    \Big] \\
    & \quad + \sum_{t = 0}^{T - 1} \alpha \Big[ 2\sqrt{2}  n^{\frac{1}{2}} \big(\frac{1}{\beta}  - 1 \big)  \Big( \sum_{k = 0}^t  \lambda^{(t - k + 1)p}  \Big)^{\frac{1}{p}} \sigma  + \frac{1}{\sqrt{n}}\big( \frac{1}{\beta} - 1 \big) \sum_{k = 0}^t \lambda^{t -k + 1} \E \big[ \| \nabla F(\x^k) - \bv^k \|\big] \Big] \\ 
    & \quad + \frac{1}{2} \alpha^2 L T \\
    \overset{(i)}{\le} \ & f(\ox^0) - f_* + 2 \|\nabla f(\ox^0) \| \cdot \frac{\alpha}{1 - \beta} + \frac{4\sqrt{2}\sigma}{n^{1 - \frac{1}{p}}} \cdot \alpha (1 - \beta)^{1 - \frac{1}{p}} T +  \frac{4L}{ 1- \lambda} \cdot  \frac{\alpha^2 T}{1 - \beta} + \frac{2 L}{ 1- \lambda} \cdot \alpha^2 T \\
    & \quad +   \frac{ 2\sqrt{2} \sigma n^{\frac{1}{2}} }{(1 - \lambda)^{\frac{1}{p}}} \cdot \big(\frac{1}{\beta} - 1 \big) \alpha T + \frac{1}{2} L \cdot \alpha^2 T \\
    & \quad + \frac{1}{\sqrt{n}} \big( \frac{1}{\beta} - 1 \big) \alpha \sum_{t = 0}^{T - 1}\sum_{k=0}^t \lambda^{t - k + 1} \Big( \beta^{t + 1} \| \nabla F(\one_n \otimes \ox^0) \| + 2\sqrt{2}n \sigma (1 - \beta)^{1 - \frac{1}{p}} +  \frac{2n L}{1 - \lambda} \cdot \frac{\alpha \beta}{1 - \beta} \Big)
\end{align*}
where in $(i)$ we used $\beta \le 1, \lambda < 1$ and Lemma \ref{lm:stacked-gd-est-err}. Denote $f(\ox^0) - f_* = \Delta_0$. Dividing $\alpha T$ from the above relation on both sides, and putting it into \eqref{eq:local-measure}, then rearranging terms leads to
\begin{align}
\label{eq:opt_final_bound}
\begin{split}
    & \frac{1}{nT}\sum_{t = 0}^{T - 1} \sum_{i=1}^n \E \big[ \| \nabla  f(\x_i^t) \|]  \\
    \le \ & \frac{\Delta_0}{\alpha T} + \frac{2\| \nabla f(\ox^0) \|}{(1 - \beta) T} + 4\sqrt{2} \sigma \cdot \frac{(1 - \beta)^{1 - \frac{1}{p}}}{n^{1- \frac{1}{p}}} + \frac{4L}{1 - \lambda} \cdot \frac{\alpha}{1 - \beta} + \Big( \frac{3L}{ 1- \lambda} + \frac{L}{2} \Big)\alpha \\
    & \quad + \frac{2\sqrt{2}\sigma}{(1 - \lambda)^{\frac{1}{p}}} \cdot n^{\frac{1}{2}}\big( \frac{1}{\beta} - 1\big) +   \frac{ \| \nabla F(\one_n \otimes \ox^0)\| }{1 - \lambda} \cdot \frac{\frac{1}{\beta} - 1}{n^{\frac{1}{2}}} + \frac{2\sqrt{2} \sigma}{1 - \lambda} \cdot n^{\frac{1}{2}} \big( \frac{1}{\beta} - 1 \big) (1 - \beta)^{1 - \frac{1}{p}} \\
    & \quad + \frac{2  L}{( 1 - \lambda)^2 } \cdot n^{\frac{1}{2}} \alpha \\
    \overset{(i)}{\le} \ & \frac{\Delta_0}{\alpha T} + \frac{2\| \nabla f(\ox^0) \|}{(1 - \beta) T} + 4\sqrt{2} \sigma \cdot \frac{(1 - \beta)^{1 - \frac{1}{p}}}{n^{1- \frac{1}{p}}} + \frac{4L}{1 - \lambda} \cdot \frac{\alpha}{1 - \beta} + \frac{3.5L}{ 1- \lambda} \alpha \\
    & \quad + \frac{20\sqrt{2}\sigma}{(1 - \lambda)^{\frac{1}{p}}} \cdot n^{\frac{1}{2}}(1 - \beta) +   \frac{ 10 \| \nabla F(\one_n \otimes \ox^0)\| }{1 - \lambda} \cdot \frac{1 - \beta}{n^{\frac{1}{2}}} + \frac{20 \sqrt{2} \sigma}{1 - \lambda} \cdot n^{\frac{1}{2}}  (1 - \beta)^{2 - \frac{1}{p}}  + \frac{2  L}{( 1 - \lambda)^2 } \cdot n^{\frac{1}{2}} \alpha \\ 
    \overset{(ii)}{\le} \ &   O \Big( \frac{\Delta_0}{ T} + \frac{2\| \nabla f(\ox^0) \|}{(1 - \beta) T} + 4\sqrt{2} \sigma \cdot \frac{(1 - \beta)^{1 - \frac{1}{p}}}{n^{1- \frac{1}{p}}} + \sqrt{\frac{L \Delta_0}{(1 - \lambda)(1- \beta) T}} +  \sqrt{\frac{3.5L\Delta_0}{(1 - \lambda) T}} \\
    & \quad + \frac{20\sqrt{2}\sigma}{(1 - \lambda)^{\frac{1}{p}}} \cdot n^{\frac{1}{2}}(1 - \beta) +   \frac{ 10 \| \nabla F(\one_n \otimes \ox^0)\| }{1 - \lambda} \cdot \frac{1 - \beta}{n^{\frac{1}{2}}} + \frac{ \sigma}{1 - \lambda} \cdot n^{\frac{1}{2}}  (1 - \beta)^{2 - \frac{1}{p}}  + \sqrt{\frac{n^{\frac{1}{2}} L \Delta_0}{(1 - \lambda)^2 T}} \Big) \\
    \overset{(iii)}{\le} \ &   O \Big( \frac{\Delta_0}{ T} + \frac{\| \nabla f(\ox^0) \|}{ T^{\frac{2p-2}{3p-2}}} +    \frac{ \sigma  }{n^{1- \frac{1}{p}} T^{\frac{p-1}{3p-2}} } + \sqrt{\frac{L \Delta_0}{(1 - \lambda) T^{\frac{2p-2}{3p-2}}}} +  \sqrt{\frac{3.5L\Delta_0}{(1 - \lambda) T}} \\
    & \quad + \frac{\sigma n^{\frac{1}{2}} }{(1 - \lambda)^{\frac{1}{p}} T^{\frac{p}{3p-2}} }  +   \frac{ \| \nabla F(\one_n \otimes \ox^0)\| }{(1 - \lambda ) n^{\frac{1}{2}} T^{\frac{p}{3p-2}} }  + \frac{ \sigma}{1 - \lambda} \frac{  n^{\frac{1}{2}}}{ T^{\frac{2p-1}{3p-2}} }  + \sqrt{\frac{n^{\frac{1}{2}} L \Delta_0}{(1 - \lambda)^2 T}} \Big) \\
\end{split}
\end{align} 
where in $(i)$ we take $\beta \ge 1/10$, in $(ii)$ we used 
\begin{align}
\label{eq:alpha-choice}
    \alpha  = \min \Big( 1, \  \sqrt{ \frac{\Delta_0 (1 - \beta)(1 - \lambda)}{4L T}}, \sqrt{\frac{\Delta_0(1 - \lambda)}{3.5LT}}, \sqrt{\frac{(1 - \lambda)^2 \Delta_0}{2n^{\frac{1}{2}} L T}}  \Big), 
\end{align}
and in $(iii)$ we used $1 - \beta = \frac{1}{T^{\frac{p}{3p-2}}}$. 
\end{proof} 

\begin{proof}[Proof of Theorem \ref{thm:p_indep_cvg}] 
Note that \eqref{eq:opt_final_bound}(ii) still holds under the same choice of $\alpha$ in \eqref{eq:alpha-choice} and $\beta \ge 1/10$. Continuing with $1 - \beta = 1/\sqrt{T}$, we have
\begin{align*}
    & \frac{1}{nT}\sum_{t = 0}^{T - 1} \sum_{i=1}^n \E \big[ \| \nabla  f(\x_i^t) \|] \\
    \le \ &   O \Big( \frac{\Delta_0}{ T} + \frac{\| \nabla f(\ox^0) \|}{\sqrt{T} } +  \frac{\sigma}{n^{1 - \frac{1}{p}}} \cdot \frac{1}{T^{\frac{p - 1}{2p}}} + \frac{1}{T^{\frac{1}{4}}} \sqrt{\frac{L \Delta_0}{1 - \lambda}} +  \sqrt{\frac{3.5L\Delta_0}{(1 - \lambda) T}} + \frac{\sigma n^{\frac{1}{2}}}{(1 - \lambda)^{\frac{1}{p}}} \frac{1}{\sqrt{T}} +  \\
    & \frac{ \| \nabla F(\one_n \otimes \ox^0)\| }{(1 - \lambda)n^{\frac{1}{2}}} \cdot \frac{1}{ \sqrt{T} } + \frac{ \sigma n^{\frac{1}{2}} }{1 - \lambda} \cdot  \frac{1}{T^{\frac{2p - 1}{2p}}}  + \sqrt{\frac{n^{\frac{1}{2}} L \Delta_0}{(1 - \lambda)^2 T}} \Big).
\end{align*}
Rearranging above terms leads to the desired upper bound.
\end{proof} 
\section{Additional Experiment Details} 
\label{sec:app_exp}
\subsection{Baseline descriptions}
\label{subsec:baselines}

Please see Table \ref{tab:baselines} for detailed descriptions of baselines.

\newcolumntype{L}[1]{>{\raggedright\arraybackslash}p{#1}} 
\renewcommand{\arraystretch}{1.5} 
\begin{table}[ht!]
\centering
\caption{Summary of Baseline Methods}
\label{tab:baselines}
\begin{tabular}{@{} L{2.6cm} L{7.6cm} L{2.8cm} @{}}
\toprule
\textbf{Method} & \textbf{Parallel update on node $i$} & \textbf{Hyper-parameters}\\
\midrule
\dsgd & $\x_i^{t + 1} = \sum_{r=1}^n w_{ir} \big(\x_r^{t} - \alpha g_r(\x_r^t, \bxi_r^t) \big)$ &  $\alpha$: constant stepsize  \\
\hline 
\texttt{DSGD-GClip} & $\x_i^{t + 1} = \sum_{r=1}^n w_{ir} \x_r^t - \alpha \clip(g_i(\x_i^t, \bxi_i^t), \tau)$ & $\alpha, \tau$: stepsize $\alpha$, and $\ell_2$ clipping levels $\tau$   \\
\hline 
\texttt{DSGD-CClip} & $\x_i^{t + 1} = \sum_{r=1}^n w_{ir} \x_r^t - \alpha \clip(g_i(\x_i^t, \bxi_i^t), \tau)$ & $\alpha, \tau$: stepsize $\alpha$, and component-wise clipping levels $\tau$   \\
\hline 
\dsgdgclip & $\x_i^{t + 1} = \sum_{r=1}^n w_{ir} \x_r^t - \alpha_t \clip(g_i(\x_i^t, \bxi_i^t), \tau_t)$ & $\alpha, \tau$: stepsize $\alpha_t = \alpha / ( t+ 1)$, and $\ell_2$ clipping levels $\tau_t = \tau(t+1)^{2/5}$   \\
\hline
\gtdsgd & $\begin{array}{l} \y_i^{t + 1} = \sum_{r = 1}^n w_{ir} \big( \y_r^t + g_r(\x_r^t, \bxi_r^t) - g_r(\x_r^{t - 1}, \bxi_r^{t - 1}) \big) \\ \x_i^{t + 1} = \sum_{r=1}^n w_{ir} \big( \x_i^t - \alpha \y_r^{t + 1} \big)\end{array}$ & $\alpha$: constant stepsize  \\
\hline
\texttt{GT-Adam} & $\begin{array}{l} \m_i^{t + 1} = \beta_1 \m_i^t + ( 1 - \beta_1) \bs_i^t \\
\bv_i^{t + 1} = \min\big( \beta_2 \bv_i^t + (1 - \beta_2) \bs_i^t \odot \bs_i^t, G\big) \\
\x_i^{t + 1} = \sum_{r = 1}^n w_{ir} \x_r^t - \alpha \frac{\m_i^{t + 1}}{\sqrt{\bv_i^{ t+ 1} + \epsilon}} \\
\g_i^{t + 1} = \nabla f_i(\x_i^{t + 1}) \\
\bs_i^{t + 1} = \sum_{r=1}^n w_{ir} \bs_r^t + \g_i^{t + 1} - \g_i^t \end{array}$ & $\alpha, G$: constant stepsize $\alpha$, and upper bound $G$, stabilization factor $\epsilon$ \\
\hline
\texttt{QG-DSGDm} & 
$ \begin{array}{l} \m_i^{t + 1} = \beta \hat{\m}_i^t + g_i(\x_i^t, \bxi_i^t) \\
\x_i^{t + 1} = \sum_{r = 1}^n w_{ir} \big( \x_i^t - \eta \m_i^{t + 1} \big) \\
\bd_i^t = (\x_i^{t + 1} - \x_i^t) / \eta \\ 
\hat{\m}_i^{t + 1} = \mu \hat{\m}_i^t + (1 - \mu) \bd_i^t \end{array}$ &  $\eta, \beta, \mu$: constant stepsize $\eta$, momentum parametes $\beta, \mu$ \\
\hline 
\sen & $\begin{array}{l} \m_i^{t + 1} =  \beta_t \m_i^t + (1 - \beta_t) \bPsi_t(g_i(\x_i^t, \bxi_i^t) - \m_i^t) \\
\x_i^{t + 1} = \sum_{r=1}^n w_{ir} \big( \x_r^t - \alpha_t \m_r^{t + 1} \big)
\end{array}$ & $c_\varphi, \tau, \alpha, \beta$: Component-wise smooth clipping operator: $\bPsi_t(y) = \frac{c_\varphi}{\sqrt{t + 1}} \frac{y }{\sqrt{y^2 + \tau(t+1)^{3/5}}}$, stepsize $\alpha_t = \alpha/(t+1)^{1/5}$, momentum stepsize $\beta_t = \beta/\sqrt{t + 1}$. \\ 
\bottomrule
\end{tabular}
\end{table} 

\subsection{Additional details for synthetic experiments}
\label{subsec:synthetic_add}
\textbf{Loss function}. Let $(\bX_{i, k}, \y_{i, k})$ denote the $k$-th sample of sub-dataset $(\bX_i, \y_i)$ on node $i$. The loss function of the considered nonconvex linear regression model on this sample is $\ell(\y_{i, k} -  \bX_{i, k} \w_i^t)$, where the 
\begin{align*}
    \ell(r)= \begin{cases}\frac{c^2}{6}\left(1-\left[1-\left(\frac{r}{c}\right)^2\right]^3\right) & \text { if }|r| \leq c, \\ \frac{c^2}{6} & \text { otherwise }\end{cases}, 
\end{align*}
and we use the suggested value $c = 4.6851$ in the robust statistics literature. 

\textbf{Hyperparameter tuning}. 
Please see Table \ref{tab:synthetic_hyperparameters} for hyperparameter searching ranges for this experiment.

\newcolumntype{L}[1]{>{\raggedright\arraybackslash}p{#1}} 
\renewcommand{\arraystretch}{1.5} 
\begin{table}[ht!]
\centering
\caption{Hyperparameter grid search in synthetic experiments}
\label{tab:synthetic_hyperparameters}
\begin{tabular}{@{} L{2.8cm} L{7.6cm} L{2.8cm} @{}}
\toprule
\textbf{Method} & \textbf{Hyperparameter search set} \\
\midrule
\dsgd & $\alpha \in \{ 10^{-5}, 5*10^{-5}, 10^{-4}, 5*10^{-4}, 10^{-3}, 5*10^{-3}, 10^{-2}, 5*10^{-2}, 10^{-1}, 0.5, 1, 5, 10 \}$  \\
\hline
\dsgdgclip & $\alpha \in \{ 10^{-5}, 5*10^{-5}, 10^{-4}, 5*10^{-4}, 10^{-3}, 5*10^{-3}, 10^{-2}, 5*10^{-2}, 10^{-1}, 0.5, 1, 5, 10 \}, \tau \in \{ 10^{-3}, 5*10^{-3}, 10^{-2}, 5*10^{-2}, 10^{-1}, 0.5, 1, 5, 10, 50, 10^{2} \}$  \\
\hline
\gtdsgd & $\alpha \in \{ 10^{-5}, 5*10^{-5}, 10^{-4}, 5*10^{-4}, 10^{-3}, 5*10^{-3}, 10^{-2}, 5*10^{-2}, 10^{-1}, 0.5, 1, 5, 10 \}$  \\
\hline
\gtnsgdm & $\alpha \in \{10^{-5}, 5*10^{-5}, 10^{-4}, 5*10^{-4}, 10^{-3}, 5*10^{-3}, 10^{-2}, 5*10^{-2}, 10^{-1}, 0.5, 1, 5, 10 \}, \beta \in \{0.01, 0.1, 0.2, 0.3, 0.4, 0.5, 0.6, 0.7, 0.8, 0.9, 0.99\}$  \\
\hline
\sen & $\alpha \in \{ 10^{-3}, 10^{-2}, 0.1, 1, 10, 30 \}, \beta \in \{ 10^{-2}, 0.1, 0.5, 0.8, 0.99 \}, c_\varphi \in \{ 1, 5, 10, 20, 30, 50 \}, \tau \in \{0.1, 1, 10, 50, 100\}$ \\ 
\bottomrule
\end{tabular}
\end{table} 

\textbf{Hardware}. We ran this experiment on Mac OS X 15.3, CPU M4 10 Cores, RAM 16GB. 

\subsection{Additional details for decentralized training of Transformers}
\label{subsec:dec_training}

\textbf{Transformer architecture}. We consider the following decoder-only Transformer model (GPT): vocabulary size is 10208, context length is 64, embedding size is 128, number of attention heads is 4, number of attention layers is 2, the linear projection dimension within attention block is 512, and LayerNorm is applied after the 2nd attention block. The total number of parameters of this model is 3018240. 



\textbf{Hyperparameter tuning.} See Table \ref{tab:minigpt_hyperparameters} for our grid search range for algorithm hyperparameters.

\newcolumntype{L}[1]{>{\raggedright\arraybackslash}p{#1}} 
\renewcommand{\arraystretch}{1.5} 
\begin{table}[ht!]
\centering
\caption{Hyperparameter grid search in decentralized training of Transformers}
\label{tab:minigpt_hyperparameters}
\begin{tabular}{@{} L{2.8cm} L{7.6cm} L{2.8cm} @{}}
\toprule
\textbf{Method} & \textbf{Hyperparameter search set} \\
\midrule
\dsgd & $\alpha \in \{ 10^{-4}, 5*10^{-4}, 10^{-3}, 5*10^{-3}, 10^{-2}, 5*10^{-2}, 10^{-1}, 0.5, 1 \}$  \\
\hline
\texttt{DSGD-GClip} & $\alpha \in \{ 10^{-4}, 10^{-3}, 10^{-2}, 10^{-1}, 1, 10, 10^2 \}, \tau \in \{ 10^{-3}, 10^{-2}, 10^{-1}, 1, 10, 10^{2} \}$  \\
\hline
\texttt{DSGD-CClip} & $\alpha \in \{ 10^{-4}, 10^{-3}, 10^{-2}, 10^{-1}, 1, 10, 10^2 \}, \tau \in \{ 10^{-3}, 10^{-2}, 10^{-1}, 1, 10, 10^{2} \}$  \\
\hline
\dsgdgclip & $\alpha \in \{ 10^{-4}, 10^{-3}, 10^{-2}, 10^{-1}, 1, 10, 10^2 \}, \tau \in \{ 10^{-3}, 10^{-2}, 10^{-1}, 1, 10, 10^{2} \}$  \\
\hline
\gtdsgd & $\alpha \in \{ 10^{-4}, 5*10^{-4}, 10^{-3}, 5*10^{-3}, 10^{-2}, 5*10^{-2}, 10^{-1}, 0.5, 1 \}$  \\
\hline
\texttt{GT-Adam} & $\alpha \in \{ 5*10^{-5}, 10^{-4}, 5*10^{-4}, 10^{-3}, 5*10^{-3}, 10^{-2}, 5*10^{-2}, 10^{-1}, 0.5, 1, 5, 10 \}, G \in \{ 10^{-3}, 10^{-2}, 10^{-1}, 1, 10\}, \epsilon = 10^{-8}$  \\
\hline
\texttt{QG-DSGDm} & $\eta \in \{ 5*10^{-5}, 10^{-4}, 5*10^{-4}, 10^{-3}, 5*10^{-3}, 10^{-2}, 5*10^{-2}, 10^{-1}, 0.5, 1, 5, 10 \}, \beta = \mu \in \{ 0.01, 0.2, 0.4, 0.6, 0.8, 0.99 \}$  \\
\hline
\gtnsgdm & $\alpha \in \{ 10^{-4},  10^{-3},  10^{-2}, 10^{-1}, 1, 10 \}, \beta \in \{ 0.01, 0.2, 0.4, 0.6, 0.8, 0.99\}$  \\
\hline
\sen & $\alpha \in \{ 10^{-4}, 10^{-3}, 10^{-2}, 10^{-1}, 10^{0}, 10^{1}, 10^{2} \}, \beta \in \{ 0.01, 0.4, 0.8, 0.99 \}, c_\varphi \in \{ 0.1, 1, 10, 10^2 \}, \tau \in \{0.01, 0.1, 1, 10\}$ \\ 
\bottomrule
\end{tabular}
\end{table} 

\textbf{Hardware}. We simulate the distributed training on one NVIDIA H100 GPU, using PyTorch 3.2 with CUDA 12. The total hyperparameter search and training procedure took around 100 GPU hours.

\end{document}